\newcommand{\ext}{\mathrm{ext}}
\renewcommand{\bar}{\overline}
\newcommand{\lint}{\llbracket}
\newcommand{\rint}{\rrbracket}
\numberwithin{equation}{section}
\newtheorem{theorema}{Theorem}
\newcommand{\blue}{}
\newtheorem{theorem}{Theorem}[section]
\newtheorem{lemma}[theorem]{Lemma}
\newtheorem{proposition}[theorem]{Proposition}
\newtheorem{rem}[theorem]{Remark}
\newcommand{\ind}{\mathbf{1}}
\newcommand{\restrict}{\!\!\upharpoonright}
\newcommand{\Supp}{\mathrm{Supp}}
\newcommand{\larg}{\mathrm{large}}
\renewcommand{\tilde}{\widetilde}
\renewcommand{\hat}{\widehat}
\newcommand{\cc}{\complement}
\newcommand{\cA}{{\ensuremath{\mathcal A}} }
\newcommand{\cB}{{\ensuremath{\mathcal B}} }
\newcommand{\cP}{{\ensuremath{\mathcal P}} }
\newcommand{\cH}{{\ensuremath{\mathcal H}} }
\newcommand{\cC}{{\ensuremath{\mathcal C}} }
\newcommand{\cL}{{\ensuremath{\mathcal L}} }
\newcommand{\cD}{{\ensuremath{\mathcal D}} }
\newcommand{\cZ}{{\ensuremath{\mathcal Z}} }
\newcommand{\cI}{{\ensuremath{\mathcal I}} }
\newcommand{\cK}{{\ensuremath{\mathcal K}} }
\newcommand{\cG}{{\ensuremath{\mathcal G}} }
\newcommand{\bP}{{\ensuremath{\mathbf P}} }
\newcommand{\bQ}{{\ensuremath{\mathbf Q}} }
\newcommand{\bE}{{\ensuremath{\mathbf E}} }
\newcommand{\bL}{{\ensuremath{\mathbf L}} }
\newcommand{\cR}{{\ensuremath{\mathcal R}} }
\newcommand{\Int}{\mathrm{Int}}
\DeclareMathSymbol{\leqslant}{\mathalpha}{AMSa}{"36} 
\DeclareMathSymbol{\geqslant}{\mathalpha}{AMSa}{"3E} 
\DeclareMathSymbol{\eset}{\mathalpha}{AMSb}{"3F}     
\newcommand{\Var}{\mathrm{Var}}        
\newcommand{\sumtwo}[2]{\sum_{\substack{#1 \\ #2}}} 
\newcommand{\bbE}{{\ensuremath{\mathbb E}} }
\newcommand{\bbH}{{\ensuremath{\mathbb H}} }
\newcommand{\bbN}{{\ensuremath{\mathbb N}} }
\newcommand{\bbP}{{\ensuremath{\mathbb P}} }
\newcommand{\bbR}{{\ensuremath{\mathbb R}} }
\newcommand{\bbZ}{{\ensuremath{\mathbb Z}} }
\newcommand{\gb}{\beta}
\newcommand{\gep}{\varepsilon}       
\newcommand{\gG}{\Gamma}
\newcommand{\gD}{\Delta}
\newcommand{\go}{\omega}
\newcommand{\gO}{\Omega}
\newcommand{\gl}{\lambda}
\newcommand{\gL}{\Lambda}
\def\captionfont@{\footnotesize}
\def\captionheadfont@{\scshape}
\long\def\@makecaption#1#2{%
  \vspace{2mm}
  \setbox\@tempboxa\vbox{\color@setgroup
    \advance\hsize-6pc\noindent
    \captionfont@\captionheadfont@#1\@xp\@ifnotempty\@xp
        {\@cdr#2\@nil}{.\captionfont@\upshape\enspace#2}%
    \unskip\kern-6pc\par
    \global\setbox\@ne\lastbox\color@endgroup}%
  \ifhbox\@ne 
    \setbox\@ne\hbox{\unhbox\@ne\unskip\unskip\unpenalty\unkern}%
  \fi
  \ifdim\wd\@tempboxa=\z@ 
    \setbox\@ne\hbox to\columnwidth{\hss\kern-6pc\box\@ne\hss}%
  \else 
    \setbox\@ne\vbox{\unvbox\@tempboxa\parskip\z@skip
        \noindent\unhbox\@ne\advance\hsize-6pc\par}%
\fi
  \ifnum\@tempcnta<64 
    \addvspace\abovecaptionskip
    \moveright 3pc\box\@ne
  \else 
    \moveright 3pc\box\@ne
    \nobreak
    \vskip\belowcaptionskip
  \fi
\relax
}
\def\writefig#1 #2 #3 {\rlap{\kern #1 truecm
\raise #2 truecm \hbox{#3}}}
\newcommand{\tf}{\textsc{f}}
\newcommand{\kC}{{\mathfrak{C}}}
\title[Disordered pinning for Solid-On-Solid]{Solid-On-Solid interfaces with disordered pinning}
\author{Hubert Lacoin}
\address{
  IMPA, Institudo de Matem\'atica Pura e Aplicada, Estrada Dona Castorina 110
Rio de Janeiro, CEP-22460-320, Brasil. 
}
\begin{document}

\begin{abstract}
We investigate the localization transition for a simple model of interface which interacts with an inhomonegeous defect plane. The interface is modeled by the graph of a function $\phi: \bbZ^2 \to \bbZ$ and the disorder is given by a fixed realization of a field of IID centered random variables $(\go_x)_{x\in \bbZ^2}$. The Hamiltonian of the system depends on three parameters $\gb,\alpha>0$ and $h\in \bbR$  which determine  respectively the intensity of nearest neighbor interaction the amplitude of disorder and  the mean value of the  interaction with the substrate. It is given by the expression
$$\cH(\phi):= \gb\sum_{x\sim y} |\phi(x)-\phi(y)|- \sum_{x}
(\alpha\go_x+h)\ind_{\{\phi(x)=0\}},$$ 
We focus on the large-$\beta$/rigid phase phase of the Solid-On-Solid (SOS) model. In that regime, we provide a sharp description of the phase transition in $h$ from a localized phase  to a delocalized one corresponding respectively to a positive and vanishing fraction of points 
with $\phi(x)=0$. We prove that the critical value for $h$ corresponds to that of the annealed model and is given by $h_c(\alpha)= -\log \bbE[e^{\alpha \go}]$, and that near the critical point, the free energy displays the following critical behavior
$$\bar \tf_\gb(\alpha,h_c+u )\stackrel{u\to 0+}{\sim} \max_{n\ge 1} \left\{\theta_1 e^{-4\gb n} u-  \frac{1}{2}\theta^2_1 e^{-8\gb n} \frac{\Var\left[e^{\alpha \go}\right]}{\bbE \left[ e^{\alpha \go} \right]^2}\right\}.$$
The positive constant $\theta_1(\gb)>0$ is defined by the asymptotic probability of spikes  {\blue under $\bP_{\beta}$}, the infinite volume SOS measure  with $0$ boundary condition, that is,  $\theta_1(\beta):=\lim_{n\to \infty}  e^{4\beta n}\bP_{\beta} (\phi({\bf 0})=n).$ This particular form of asymptotic behavior is the signature of an accumulation of layering transitions: At the level of heuristics, the solution $n_u$ to the variational problem appearing in the asymptotics corresponds to the typical distance to the defect plane at which $\phi$ localizes. The {\blue discontinuity points for  the derivative of the functional} appearing in the asymptotics correspond to {\blue the value of $u$ at}  which this integer observable {\blue undergoes a transition. These transitions accumulates at zero and $n_u$ diverges to infinity as $u\to 0+$.}
\\[10pt]
  2010 \textit{Mathematics Subject Classification: 60K35, 60K37, 82B27, 82B44}
  \\[10pt]
  \textit{Keywords:  Solid On Solid, Disordered Pinning Model, Localization Transition, Critical Behavior, Disorder Relevance, Layering.}
\end{abstract}

\maketitle

\tableofcontents

\section{Introduction}

The Solid-On-Solid model (SOS) was introduced in \cite{cf:BCF, cf:T} as an effective interface model for lattice systems displaying phase coexistence, a prototypical example being the low temperature 3D-Ising model with mixed boundary condition \cite{cf:Dob}. It provides a simplified framework to study the behavior of interfaces while conserving a rich and interesting phenomenology.
We   refer the reader to the recent survey concerning effective interfaces \cite{cf:IV} and to the introduction of \cite{cf:part1, cf:HL} for an overview centered on the SOS model.
 
 \medskip

 The two dimensional SOS model is known to display a phase transition from a low temperature phase where interfaces are rigid to a high temperature one where the interfaces are rough \cite{cf:BM, cf:FS, cf:Swend}.
 The rigid phase is the best understood of the two : 
 The representation of interfaces in terms of level lines allows for the application of cluster expansion techniques \cite{cf:KP} which can yield detailed information on the model such as exponential decay of correlations \cite{cf:BM}.
 
 \medskip
 
The focus of the present paper is to study the interaction of an SOS interface in the rigid phase with a defect plane. On top of the gradient nearest-neighbor interaction corresponding to the SOS Hamiltonian, we add an energetic reward (or penalty) for points located at level $0$.
The defect plane of interaction can be crossed. In other words, we allow our interface to visit both the positive and negative half-space.

\medskip

In the homogeneous setup, when the intensity of the interaction with the substrate, call it $h$, is the same at every point, one can show  using rather soft arguments (see Proposition \ref{firstorder} below) that the system exhibits a first order phase transition: {\blue the free energy's derivative - which corresponds to the asymptotic contact fraction - displays a discontinuity at the critical point}. 

\medskip

The understanding of the disordered model is a more delicate matter and the main focus of the present paper.
We want to understand whether and how the transition is modified when disorder is introduced in the interaction field, that is, when $h$ is replaced by $h+\alpha \go_x$ where $(\go_x)$ is an IID centered field.
The understanding of the effect of heterogeneities in a statistical mechanics model is in general a challenging problem which has attracted the attention of theoretical  and mathematical physicists
(see e.g. \cite{cf:AW, cf:AHP} and references therein for the case of the random field Ising model).
In the specific case of pinning models, the study of disorder relevance for one dimensional interfaces has given rise to a plentiful literature partly motivated by a connection to the DNA denaturation phenomenon (see \cite{cf:GB, cf:G} for a review and references). The case of higher dimension was explored only more recently in the case of Lattice Free Field interfaces \cite{cf:CM1,cf:GL, cf:GL2, cf:GL3, cf:FF2} for which heights takes values in $\bbR$.

\medskip

Our aim in studying a discrete interface model such as the SOS case is to understand the combined influence of lattice effects and disorder  on the phase transition. 
Our main result (Theorem \ref{mainres}) gives a detailed picture of the phase transition :
We prove that the critical value for $h$ is the same as that of the annealed system (which for our model coincides with the homogeneous system) and we identify the asymptotic behavior of the free energy  close to the critical point.

\medskip

This asymptotic behavior differs from that of the homogeneous model, in various aspects.
A first major difference with the homogeneous model is that the free energy growth at criticality is quadratic in $(h-h_c)$ instead of linear in the homogeneous case. This change of power exponent (from $1$ to $2$) is analogous to what was observed for the Lattice Gaussian Free Field (LGFF) in dimension larger than $3$ \cite{cf:GL3}
(when $d\ge 3$ the variance of the Lattice Free Field is uniformly bounded which makes the model somehow similar to the SOS model in the rigid phase,
 the two dimensional LGFF displays a very different behavior, see \cite{cf:FF2} for details).
Also, and this is perhaps the most novel aspect of our result, we identify a phenomenon which is specific to the discrete nature of the  SOS model : The asymptotics of the free energy is not a pure power. It is a piecewise affine function {\blue whose derivative displays an accumulative sequence of
discontinuity point} (given in \eqref{optimize} below). These points correspond to changes of the typical localization height of the SOS field which can take only integer values.

\medskip

Our result thus indicates that besides the localization transition, the system could undergo countably many \textit{layering transitions}.
 This is only a heuristic reasoning and it is in general a difficult task to prove that the free energy  itself - and not only the asymptotic approximation given in \eqref{optimize} - displays {\blue discontinuities in its derivative}.
In the present paper, we content ourselves with the proof of free energy asymptotic and leave the (challenging) question of the presence of angular points on the free energy for future endeavors.
To our knowledge, this is the first time in the litterature (both in Mathematics and Theoretical Physics) that a layering phenomenon triggered by disorder is identified. 

\medskip

Let us mention also the free energy's asymptotic expression displays similarity with the one found in 
\cite[Theorem 2.1]{cf:part1} for the homogeneous wetting of SOS interfaces
with an homogeneous interaction at level zero and half-space restriction 
though the mechanism triggering the layering phenomenon in that case is of a very different nature (see \cite{cf:ADM, cf:AY, cf:BMF, cf:CM, cf:chal,cf:DM, cf:HL} for more details on the subject).

\section{Model and results}

\subsection{The SOS model}

Consider $\gL$ a finite subset of  $\bbZ^d$ (equipped with its usual lattice structure) and let $\partial \gL$ denote its external boundary 
$$\partial \gL:= \{ x \in \bbZ^d \setminus \gL \ : \ \exists y \in \gL, \ x \sim y \}.$$
Setting  $\gO_{\gL}:=\{ \phi : \gL \to \bbZ\}$ and fixing an integer parameter $n\in \bbZ$ we define the Hamiltonian for the Solid-On-Solid (SOS) model with boundary condition $n$ as,
\begin{equation}\label{defhamil}
\cH^{n}_\gL(\phi):= \frac{1}{2}\sumtwo{(x, y) \in \gL^2}{ x \sim y} |\phi(x)-\phi(y)|
+\sumtwo{x\in \gL, y\in \partial\gL}{x\sim y} |\phi(x)-n|.
\end{equation}
The factor $1/2$ in front of the first sum is present so that each pair $\{x,y\}$ of neighboring points has total weight one in the sum.
Given $\gb>0$, we define
the SOS measure with boundary condition $n$, $\bP^{n}_{\gL,\gb}$ on $\gO_{\gL}$ by
\begin{equation}\label{defSOS}
\bP^{n}_{\gL,\gb}(\phi):= \frac{1}{\mathcal Z_{\gL,\gb}}e^{-\gb\cH^{n}_\gL(\phi)} \quad \text{where} \quad
\mathcal Z_{\gL,\gb}:=\sum_{\phi\in \gO_{\gL}}  e^{-\gb\cH^n_\gL(\phi)}.
\end{equation}
Note that, by vertical translation invariance, $\mathcal Z_{\gL,\gb}$ does not depend on $n$.
For readability, we drop the superscript $n$ in the notation in the special case $n=0$.
Observe that if $\gL^{(1)}$ and $\gL^{(2)}$ are disjoint  we 
 have 
 \begin{equation}\label{hicup}
 \cH_{\gL^{(1)}\cup \gL^{(2)}}(\phi)\le  \cH_{\gL^{(1)}}(\phi)+\cH_{\gL^{(2)}}(\phi)
 \end{equation}
which yields immediately 
\begin{equation}\label{hicup2}
 \mathcal Z_{\gL^{(1)}\cup \gL^{(2)},\gb}\ge \cZ_{\gL^{(1)},\gb}\cZ_{\gL^{(2)},\gb}.
\end{equation}
This property implies (we refer to \ \cite[Exercise 3.3]{cf:Vel} for a proof  of this classical fact for the partition function of the Ising model) 
the existence of the following limit
\begin{equation}\label{revert}
\lim_{n \to \infty} \frac{1}{|\gL_n|}\log  \cZ_{\gL,\gb}=\sup_n \frac{1}{|\gL_n|}\log  \cZ_{\gL,\gb} =\tf(\gb),
\end{equation} 
along any sequence of rectangles $\gL_n:=\prod_{i=1}^d\lint a^{(i)}_n, b^{(i)}_n\rint$ which satisfies  $ \lim_{n\to \infty }\min_i(b^{(i)}_n-a^{(i)}_n)=\infty$ (here and in the rest of the paper, we use the notation 
$\lint a, b\rint =[a,b]\cap \bbZ$ for $a<b$).
To check that $\tf(\beta)$ is finite for $\beta>0$, the reader can check by a simple computation that $Z_{\gL,\beta}\le \left( \frac{e^{\beta}+1}{e^{\beta}-1}\right)^{|\gL|}$.

\begin{rem}
With additional efforts, using the specifics of the model, one can extend the statement and show that \eqref{revert} holds as soon as 
$\lim_{n\to \infty} |\partial \gL_n|/|\gL_n|=0\, ,$ but this is not required  for our analysis.
\end{rem}

When $\beta$ increases, the SOS model with constant boundary condition undergoes a phase transition from a rough phase where the variance of $\phi$ diverges  with the distance to the boundary $\partial \gL$ (see \cite{cf:FS}) to a rigid phase where the distribution of $\phi$ at any given site remains tight.
In the present paper we are solely interested in this low-temperature phase, so let us describe more accurately the results which are available for large $\beta$.
It is known  (cf.\ \cite[Theorem 2]{cf:BM})  that for $\beta \ge \gb_0$ sufficiently large,  $\bP_{\gL,\gb}$ converges (in the sense of finite dimensional marginals)
to an infinite volume measure $\bP_{\gb}$ defined on $\gO:=\{ \phi \ : \ \bbZ^2 \to \bbZ\}$. We introduce a quantitative version of the statement which requires 
the introduction of  some classic terminology.

\medskip

We say that a function $f: \ \gO_{\infty}:=(\bbZ)^{\bbZ^d}\to \bbR$ is local if there exists $x_1,\dots,x_k \in \bbZ^d$  and $\tilde f:  (\bbZ)^{k}\to \bbR$ 
such that $f(\phi)=\tilde f(\phi(x_1),\dots,\phi(x_k)).$ The minimal  choice (with respect to the inclusion) 
for the set of indices $\{x_1,\dots, x_k\}$ is called the support of $f$ (and is denoted by $\Supp(f)$).
With some abuse of notation, whenever $\gL$ contains the support of $f$,  we extend $f$ to $\gO_{\gL}$ in the obvious way.
An event is called local if its indicator function is a local function.
For $A$ and $B$ two finite subsets of $\bbZ^d$ we set 
\begin{equation}\label{disset}
d(A,B):=\min_{x\in A, y\in B} |x-y|,
\end{equation}
where $|\ \cdot \ |$ denote the $\ell_1$ distance.
The following result follows from the proof in \cite{cf:BM}
(see also \cite{cf:HL} for details).
\begin{theorema}\label{infinitevol}
There exists constants $\gb_0(d)>0$ and $c>0$ such that 
for any $\gb>\gb_0$, 
there exists a measure $\bP_{\gb}$ defined  on $\gO_{\infty}$ such that for every local function 
$f: \gO_{\infty}\to [0,1]$ with  $\Supp(f)=A$, and every $\gL$ that contains $A$
\begin{equation}\label{decayz}
| \bE_{\gL,\gb}[f(\phi)]-\bE_{\gb}[f(\phi)] |\le  |A| e^{-c \gb d(\partial \gL, A)}.
\end{equation}
\end{theorema}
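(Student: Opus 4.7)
The natural approach is the contour / level-line representation of SOS configurations combined with a standard cluster expansion, following the strategy of \cite{cf:BM} that is already invoked in the statement. I first encode any $\phi:\gL\to \bbZ$ with zero boundary condition through its level lines: for each $k\ge 1$, the connected components (in the dual lattice) of the interface between $\{x:\phi(x)\ge k\}$ and its complement, together with the analogous ``negative'' contours for $k\le -1$. This yields a bijection between SOS configurations and compatible families of signed, nested contours $\gG=\{\gga_1,\dots,\gga_m\}$ under which the Hamiltonian becomes exactly $\sum_i |\gga_i|$, where $|\gga|$ is the length of the contour. The Boltzmann weight of $\phi$ then factorizes into local weights $e^{-\gb|\gga_i|}$, subject only to the hard constraint that contours do not cross and signs match the nesting.

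Second, for $\gb$ sufficiently large I would set up a polymer representation whose polymers are contours (or small compatible clusters of them) interacting only through the above compatibility constraint. A standard Peierls argument bounds the number of contours of length $\ell$ enclosing a given point by $C_0^\ell$, so that a Kotecký--Preiss condition
\begin{equation*}
\sum_{\gga\ni x}|\gga|\,e^{-(\gb-C_1)|\gga|}\le 1
\end{equation*}
holds for $\gb$ large. The resulting cluster expansion produces an absolutely convergent series for $\log \cZ_{\gL,\gb}$ and, more importantly, for any local expectation $\bE_{\gL,\gb}[f(\phi)]$, with bounds uniform in $\gL\supset A$. The measure $\bP_{\gb}$ is then defined by passing to $\gL\uparrow \bbZ^d$ in these series.

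Third, to obtain the quantitative bound \eqref{decayz}, observe that via the cluster expansion $\bE_{\gL,\gb}[f]-\bE_{\gb}[f]$ reduces to a sum over clusters $\cC$ of contours that meet both the support $A$ of $f$ and the boundary $\partial\gL$: all other clusters cancel between the two expectations because they carry identical weights in the finite and infinite volume expansions. Any cluster contributing to the difference must have total contour length at least $d(\partial\gL,A)$ in order to bridge $A$ and $\partial\gL$. Combining the exponential damping $e^{-c\gb|\cC|}$ inherited from the cluster expansion with the fact that each surviving cluster can be anchored at some $x\in A$, summation yields a bound of the claimed form $|A|\,e^{-c\gb\,d(\partial\gL,A)}$.

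The main obstacle is the second step: the entropic and combinatorial bookkeeping needed to turn the formal weights $e^{-\gb|\gga|}$ into a convergent polymer model. SOS contours are nested and interact through no-crossing constraints at each level, so they cannot be treated as independent polymers; disentangling this level-by-level structure so that a Kotecký--Preiss criterion holds uniformly in $\gL$ is precisely the technical core of \cite{cf:BM}. Once that expansion is in place, the existence of $\bP_\gb$, the exponential decay of correlations, and the boundary-sensitivity estimate \eqref{decayz} all follow as essentially routine consequences of the cluster expansion machinery.
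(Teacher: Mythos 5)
Your proposal is correct and is essentially the same argument the paper relies on: the paper does not prove Theorem \ref{infinitevol} itself but attributes it to the contour/cluster-expansion analysis of \cite{cf:BM} (with details in \cite{cf:HL}), which is exactly the route you describe — level-line representation, a Kotecký--Preiss convergence criterion for $\gb$ large as in \cite{cf:KP}, and the observation that the finite- and infinite-volume expansions of $\bE_{\gL,\gb}[f]$ differ only by clusters joining $\Supp(f)$ to $\partial\gL$, whose total length is at least $d(\partial\gL,A)$. You also correctly identify that the genuine technical work lies in taming the nesting/compatibility constraints so that the polymer expansion converges, which is the content of the cited references rather than of this paper.
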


\subsection{The SOS model with interaction at level $0$}

We introduce now a modification of the SOS measure by introducing an extra term in the Hamiltonian to model an interaction of the interface at level $0$.
Consider $(\go_x)_{x\in \bbZ^d}$ a fixed realization of an IID random field indexed by $\bbZ^d$ (we let $\bbP$ denote its distribution). We assume that our random variables 
have finite exponential moments of all orders
\begin{equation}\label{allorder}
\forall \alpha \in \bbR,\  \gl(\alpha):=\log \bbE[e^{\alpha \go_x}]<\infty.
\end{equation} 
We also assume  (without loss of generality) that $\go_x$ has zero mean.
Fixing $\alpha>0$ and $h\in \bbR$, we define $\bP^{n,h,\alpha,\go}_{\gL,\gb}$ to be a modified version of the SOS measure, where for each $x$ such that $\phi(x)=0$, an energy term $\alpha \go_x- \gl(\alpha)+h$ is added to the Hamiltonian.  
That is, setting 
$\delta_x:= \ind_{\{ \phi(x)=0\}},$
\begin{equation}
\bP^{n,h,\alpha,\go}_{\gL,\gb}(\phi):= \frac{1}{\cZ^{n,h,\alpha,\go}_{\gL,\gb}}e^{-\gb\cH^{n}_\gL(\phi)+ \sum_{x\in \gL}(\alpha \go_x- \gl(\alpha)+h)\delta_x}
\end{equation}
with 
\begin{equation}
\cZ^{n,h,\alpha,\go}_{\gL,\gb}:= \sum_{\phi \in \gO_{\gL}}e^{-\gb\cH^{n}_\gL(\phi)+ \sum_{x\in \gL}(\alpha \go_x- \gl(\alpha)+h)\delta_x}.
\end{equation}
The term $-\gl(\alpha)$ is present by mere convention (it just corresponds to a shift in $h$) but turns out to be practical when considering the annealed model (see in particular the inequality \eqref{annealed} below).

Setting $\gL_N:=\lint 1,N\rint^2$ we replace the subscript $\gL$ by $N$ in the notation when $\gL_N$ is considered. When $\alpha=0$, neither the partition function nor the probability measure introduced above depend on $\go$ and they are simply denoted as $\cZ^{n,h}_{N,\gb}$ and $\bP^{n,h}_{N,\gb}$ respectively. As stated before, we drop $n$ from the notation when considering $0$ boundary condition.

\medskip

Our aim is to understand the asymptotic properties of $\bP^{h,\alpha,\go}_{N,\gb}$ when $N$ tends to infinity.
In particular we want to understand the effect of disorder, that is, how 
$\bP^{h,\alpha,\go}_{N,\gb}$ differs from $\bP^{h}_{N,\gb}$ when $\alpha>0$.
To this purpose we study  the asymptotic free energy per unit of volume (we simply refer to it as the free energy) whose existence follows from combining now standard arguments which can be found e.g.\ in \cite{cf:CSY, cf:Vel, cf:G}. We include the details of the proof in Appendix \ref{appintro}.

\begin{proposition}\label{freeen}

For every $\alpha, \beta>0$ and $h\in \bbR$, the following limit exists and does not depend on $n$ {\blue nor on $\go$}. The convergence holds both in $L_1(\bbP)$ and in the almost sure sense 
\begin{equation}\label{convergence}
\lim_{N \to \infty}\frac{1}{N^d}\log  \cZ^{n,h,\alpha,\go}_{N,\gb} =:\tf(\gb,\alpha,h).
\end{equation} 
{\blue In particular we also have 
$$\lim_{N \to \infty}\frac{1}{N^d}\bbE \left[\log  \cZ^{n,h,\alpha,\go}_{N,\gb}\right]=\tf(\gb,\alpha,h).$$}
The function $h \mapsto \tf(\gb,\alpha,h)$ is convex non-decreasing and we have whenever the derivative exists 
\begin{equation}\label{contact}
\partial_h \tf(\gb,\alpha,h):= \lim_{N\to \infty} \frac{1}{N^d} \bE^{n,h,\alpha,\go}_{N,\gb}\left[\sum_{x\in \gL_N} \delta_x\right].
\end{equation}
Setting $\tf(\gb,h):= \tf(\beta,0,h)$ we have for every $\alpha>0$,
\begin{equation}\label{annealed}
 \tf(\beta,h-\gl(\alpha)) \le \tf(\beta,\alpha,h)\le  \tf(\beta,h).
\end{equation}
We also have for every $\alpha, \beta$ and $h$
\begin{equation}\label{labound}
 \tf(\beta,\alpha,h)\ge \tf(\gb).
 \end{equation}

\end{proposition}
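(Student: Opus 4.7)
The plan is to proceed in three stages, each addressing one group of assertions.

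\textbf{Existence and $n,\go$-independence.} Combining \eqref{hicup} with the fact that the pinning exponent $\sum_x(\alpha\go_x-\gl(\alpha)+h)\delta_x$ splits additively over disjoint regions yields the superadditivity
$$\cZ^{0,h,\alpha,\go}_{\gL^{(1)}\cup\gL^{(2)},\gb}\ \ge\ \cZ^{0,h,\alpha,\go}_{\gL^{(1)},\gb}\,\cZ^{0,h,\alpha,\go}_{\gL^{(2)},\gb}$$
for any disjoint $\gL^{(1)},\gL^{(2)}\subset\bbZ^d$. A Fekete-type argument along $\gL_N$ gives $\lim_N N^{-d}\bbE[\log\cZ^{0,h,\alpha,\go}_{N,\gb}]=:\tf(\gb,\alpha,h)$. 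The almost sure (and hence, by uniform integrability from the crude bounds $e^{-CN^d-\sum_x(\alpha\go_x)_-}\le\cZ^{0,h,\alpha,\go}_{N,\gb}\le e^{CN^d+\sum_x(\alpha\go_x)_+}$, $L^1$) convergence then follows either from a multiparameter superadditive ergodic theorem, or directly from concentration: $\log\cZ^{0,h,\alpha,\go}_{N,\gb}$ is $\alpha$-Lipschitz in each coordinate $\go_x$, so the exponential moments in \eqref{allorder} yield sub-Gaussian deviations of order $\sqrt{N^d}$ around the mean, and Borel--Cantelli gives a.s.\ convergence to the deterministic limit after division by $N^d$. Ergodicity of the shift on $\go$ forces the limit to be $\go$-independent. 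Independence from $n$ follows from the uniform-in-$\phi$ bound $|\cH^n_{\gL_N}(\phi)-\cH^0_{\gL_N}(\phi)|\le |n|\,|\partial\gL_N|=O(N^{d-1})$, obtained by the triangle inequality on each boundary edge in \eqref{defhamil}.

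\textbf{Convexity, derivative, annealed upper bound.} The function $h\mapsto\log\cZ^{n,h,\alpha,\go}_{N,\gb}$ is convex non-decreasing (log-sum-of-exponentials of affine non-decreasing arguments), with derivative $\bE^{n,h,\alpha,\go}_{N,\gb}[\sum_x\delta_x]$; these properties pass to the pointwise limit, and the formula \eqref{contact} at points of differentiability follows from the classical fact that derivatives of a pointwise-convergent sequence of convex functions converge at any differentiability point of the limit. The annealed upper bound in \eqref{annealed} is a consequence of the Fubini identity $\bbE[\cZ^{n,h,\alpha,\go}_{N,\gb}]=\cZ^{n,h}_{N,\gb}$ (using $\bbE[e^{\alpha\go_x-\gl(\alpha)}]=1$) combined with Jensen's inequality $\bbE[\log\cZ]\le\log\bbE[\cZ]$.

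\textbf{Quenched Jensen and the lower bounds.} For any auxiliary parameter $h'\in\bbR$, factor
$$\cZ^{n,h,\alpha,\go}_{N,\gb}\ =\ \cZ^{n,h'}_{N,\gb}\cdot\bE^{n,h'}_{N,\gb}\!\left[\exp\!\left(\sum_x(\alpha\go_x-\gl(\alpha)+h-h')\delta_x\right)\right]\!,$$
apply Jensen inside the \emph{deterministic} Gibbs expectation $\bE^{n,h'}_{N,\gb}$, and average over the disorder using $\bbE[\go_x]=0$ to get
$$\bbE[\log\cZ^{n,h,\alpha,\go}_{N,\gb}]\ \ge\ \log\cZ^{n,h'}_{N,\gb}+(h-\gl(\alpha)-h')\,\bE^{n,h'}_{N,\gb}\!\left[\sum_{x\in\gL_N}\delta_x\right]\!.$$
Choosing $h'=h-\gl(\alpha)$ cancels the correction and yields the left inequality in \eqref{annealed}. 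Choosing $h'=0$ gives $\tf(\gb,\alpha,h)\ge\tf(\gb)+(h-\gl(\alpha))\rho_n$, where $\rho_n:=\lim_N N^{-d}\bE^n_{N,\gb}[\sum_x\delta_x]$; by Theorem \ref{infinitevol} applied to $f=\ind_{\phi(x)=0}$, together with vertical translation invariance and the negligible $O(N^{d-1})$ size of the boundary layer, one identifies $\rho_n=\bP_\gb(\phi({\bf 0})=-n)$. Since this quantity vanishes as $n\to\infty$ (the distribution of $\phi({\bf 0})$ under the rigid-phase measure $\bP_\gb$ decays rapidly) and $\tf(\gb,\alpha,h)$ is $n$-independent, one sends $n\to\infty$ to conclude \eqref{labound}. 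The main obstacle is precisely this last bound: unlike the annealed inequalities, \eqref{labound} does not follow from a one-step comparison, and the proof essentially exploits the $n$-independence of the free energy together with the rigid-phase tail information supplied by Theorem \ref{infinitevol}.
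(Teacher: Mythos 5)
Most of your argument coincides with the paper's: superadditivity from \eqref{hicup} plus additivity of the pinning energy, a Fekete argument for the mean, concentration for the almost sure statement, the $O(N^{d-1})$ comparison of Hamiltonians for $n$-independence, log-convexity in $h$ for \eqref{contact}, and Jensen/Fubini for the annealed upper bound. For the lower bound in \eqref{annealed} your quenched-Jensen interpolation through $\bE^{n,h'}_{N,\gb}$ with $h'=h-\gl(\alpha)$ is a valid alternative to the paper's argument (which instead notes that $\alpha\mapsto\bbE[\log\cZ^{n,u+\gl(\alpha),\alpha,\go}_{N,\gb}]$ is convex with vanishing derivative at $\alpha=0$); the two are essentially equivalent. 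One small imprecision: with only the exponential moments \eqref{allorder}, bounded-differences/Azuma does not directly give sub-Gaussian concentration, since the martingale increments are unbounded. The paper truncates $\go$ at level $u^{2/3}$, applies Azuma to the truncated field, and controls the replacement error, ending with a tail of order $N^d\exp(-cu^{2/3})$ — still amply summable for Borel--Cantelli, but you should either do this truncation or invoke a superadditive ergodic theorem as you suggest.

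The genuine gap is in your proof of \eqref{labound}. You derive $\tf(\gb,\alpha,h)\ge\tf(\gb)+(h-\gl(\alpha))\rho_n$ and then need $\rho_n=\lim_N N^{-d}\bE^n_{N,\gb}[\sum_x\delta_x]$ to exist and to vanish along a sequence $n\to\infty$. Your identification $\rho_n=\bP_\gb(\phi({\bf 0})=-n)$ rests entirely on Theorem \ref{infinitevol}, which is only available for $\gb\ge\gb_0$ in the rigid phase; but the proposition asserts \eqref{labound} for \emph{every} $\gb>0$, including the rough phase where no such infinite-volume control (nor even existence of $\rho_n$) is provided. In the rough phase one would need an anticoncentration statement of the type $\sup_n\limsup_N N^{-d}\sum_x\bP^0_{N,\gb}[\phi(x)=-n]\to 0$, which does not follow from anything quoted in the paper. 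The paper avoids this entirely: it lower-bounds the reduced partition function by $\bP^n_{N,\gb}[\forall x,\ \phi(x)\ge 1]=\bP_{N,\gb}[\forall x,\ \phi(x)\ge 1-n]$ (so that the pinning term vanishes identically on the event), and then shows by conditioning on a grid of mesh $N_0$ where $\phi=0$, the spatial Markov property, and the finiteness of $\tf(\gb)$, that this probability is at least $e^{-\gep N^d}$ for suitable $n=n(\gep)$ — an argument valid for all $\gb>0$. Your route is fine if you are content to state \eqref{labound} only for $\gb\ge\gb_0$ (which is all the main theorem uses), but as a proof of the proposition as stated it is incomplete.
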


\noindent We  let $\bar \tf_{\gb}(\alpha,h)$ denote the free energy difference produced by the interaction with the defect plane by setting 
$$ \bar \tf_{\gb}(\alpha,h):= \tf(\gb,\alpha,h)- \tf(\gb)$$
and simply write $\bar \tf_\gb(h)$ when $\alpha=0$.
We have for any fixed value of $n$ 
\begin{equation}\label{reduced}
  \bar \tf_{\gb}(\alpha,h)= \lim_{N\to \infty} \frac{1}{N^d} \log \bE^n_{N,\beta} \left[ e^{\sum_{x\in \gL_N}(\alpha \go_x- \gl(\alpha)+h)\delta_x} \right]=: \lim_{N\to \infty} \frac{1}{N^d} \log Z^{n,h,\alpha,\go}_{\beta,N}.
\end{equation}
We refer to  $Z^{n,h,\alpha,\go}_{\beta,N}= \frac{\cZ^{n,h,\alpha,\go}_{N,\beta}}{\cZ_{N,\beta}}$ as the reduced partition function.
Note that from \eqref{labound} $\bar \tf_{\gb}(\alpha,h)\ge 0$ for all $h\in \bbR$. Moreover it is quite immediate to check that $\bar \tf_{\gb}(h)\le 0$ when $h\le 0$  (and hence that 
$\bar \tf_{\gb}(h)= 0$ when $h\le 0$) . By \eqref{annealed} we also have $\bar \tf_{\gb}(\alpha,h)= 0$ when $h\le 0$.
Let us set 
$$h_c(\beta,\alpha):= \inf\{ h : \ \bar \tf_{\beta}(\alpha,h)>0  \}.$$ 
According to \eqref{contact}, $h_c(\beta,\alpha)$ separates two phases, with the asymptotic contact fraction vanishing
if $h<h_c(\beta,\alpha)$ and remains bounded away from zero when $h>h_c(\beta,\alpha)$ or more precisely
\begin{equation}
 \begin{split} \lim_{N\to \infty} \frac{1}{N^d} \bE^{n,h,\alpha,\go}_{N,\gb}\left[\sum_{x\in \gL_N} \delta_x\right]&=0 \quad \text{ if } h<h_c(\beta,\alpha),\\
  \liminf_{N\to \infty} \frac{1}{N^d} \bE^{n,h,\alpha,\go}_{N,\gb}\left[\sum_{x\in \gL_N} \delta_x\right]&>0 \quad \text{ if } h>h_c(\beta,\alpha).
 \end{split}
\end{equation}
{\blue Note that the asymptotic contact fraction is also zero at the critical point $h_c(\beta,\alpha)$ if  $\partial_h \tf_{\beta}(\alpha,h_c(\beta,\alpha))=0$. In the case when $\partial_h \tf_{\beta}(\alpha,h_c(\beta,\alpha))>0$ then \eqref{contact} does not provide information but one may conclude using other arguments see Remark \ref{zetruc} below.}
We focus on the large-$\beta$ regime in which interfaces are rigid (cf.\ Theorem \ref{infinitevol}).
In that case, the homogeneous model ($\alpha=0$) displays a first order phase transition at $h=0$, in the sense that asymptotic contact fraction displays a discontinuity. The statement is a rather direct consequence of rigidity of the interfaces,  the  proof is included  in Appendix \ref{appintro} for completeness.

\begin{proposition}\label{firstorder}
 When $\beta\ge \beta_0$ (given by Theorem \ref{infinitevol}) we have $h_c(\beta,0)=0$ and there exists a constant $c_{\beta}>0$.
 \begin{equation}
  \bar \tf_\gb(h)\stackrel{h\to 0+}{\sim} c_{\beta} h.
 \end{equation}
\end{proposition}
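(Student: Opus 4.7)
The plan is to combine a short Jensen-type lower bound with the convexity of $\bar{\tf}_\beta$ in $h$, extracting both the location of the critical point and the linear asymptotic from the same soft input. The only non-formal ingredient is the positivity of the infinite-volume contact density $c^*_\beta := \bP_{\beta}(\phi(\mathbf{0})=0)$, which in the rigid regime $\beta\geq\beta_0$ follows either from a classical Peierls argument on SOS level lines (giving in fact $c^*_\beta = 1 - O(e^{-4\beta})$) or, more in the spirit of the paper, by applying Theorem \ref{infinitevol} to the events $\{\phi(\mathbf{0})=k\}$ combined with the tightness of $\phi(\mathbf{0})$ in the low-temperature phase.

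For the lower bound, I start from the formula for the reduced partition function \eqref{reduced} with $\alpha = 0$ and apply Jensen to the logarithm, which yields
\begin{equation*}
\frac{1}{N^d} \log Z^{0,h,0,\go}_{\beta,N} \;\geq\; \frac{h}{N^d}\sum_{x\in\gL_N}\bE_{N,\beta}[\delta_x].
\end{equation*}
Because $\delta_x=\ind_{\{\phi(x)=0\}}$ is a local function with singleton support, Theorem \ref{infinitevol} together with translation invariance of $\bP_\beta$ gives $|\bE_{N,\beta}[\delta_x] - c^*_\beta| \leq e^{-c\beta\, d(x,\partial\gL_N)}$. Discarding the $O(N^{d-1})$ boundary contribution and passing to the limit then yields $\bar{\tf}_\beta(h)\geq c^*_\beta\, h$ for every $h\geq 0$.

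From here everything is automatic. The inequality $\bar{\tf}_\beta(h)\geq c^*_\beta h>0$ for $h>0$ forces $h_c(\beta,0)\leq 0$, and combined with the elementary $\bar{\tf}_\beta(h)=0$ for $h\leq 0$ already recorded before the statement this gives $h_c(\beta,0)=0$. For the asymptotic, Proposition \ref{freeen} tells us $\bar{\tf}_\beta$ is convex and non-decreasing with $\bar{\tf}_\beta(0)=0$; hence $h\mapsto \bar{\tf}_\beta(h)/h$ is non-decreasing on $(0,\infty)$ and the limit $c_\beta := \lim_{h\to 0^+}\bar{\tf}_\beta(h)/h$ exists. The lower bound forces $c_\beta\geq c^*_\beta>0$, while the trivial estimate $\sum_x\delta_x\leq N^d$ gives $\bar{\tf}_\beta(h)\leq h$ and hence $c_\beta\leq 1$. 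Therefore $c_\beta\in(0,1]$ is well-defined and $\bar{\tf}_\beta(h)\sim c_\beta h$ as $h\to 0^+$. I do not foresee any serious obstacle: the essential analytic content has been completely offloaded to Theorem \ref{infinitevol}, and the only place where care is required is verifying $c^*_\beta>0$, which is a classical consequence of interface rigidity in the large-$\beta$ regime.
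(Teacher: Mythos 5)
Your proposal is correct and follows essentially the same route as the paper's own proof in Appendix A: a Jensen lower bound $\bar\tf_\beta(h)\ge h\,\bP_\beta(\phi(\mathbf 0)=0)$ obtained via Theorem \ref{infinitevol}, the trivial vanishing of $\bar\tf_\beta$ for $h\le 0$, and convexity of $\bar\tf_\beta$ (monotonicity of the slope $\bar\tf_\beta(h)/h$) to extract the linear asymptotic. Your explicit remarks on why $\bP_\beta(\phi(\mathbf 0)=0)>0$ and on the upper bound $c_\beta\le 1$ are welcome details that the paper leaves implicit.
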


\begin{rem}
 The natural guess is that one should have $c_{\beta}=\bP_{\beta}(\phi({\bf 0})=0)$. It seems likely such a statement can be proved using cluster expansion techniques similar to the ones exposed e.g.\ in \cite{cf:HL} but this is out of the scope of the present paper.
\end{rem}

{\blue
\begin{rem}\label{zetruc} As a consequence of Theorem \ref{infinitevol} we have when $\beta=0$ and the fact that $h_c(\beta,0)=0$ we have
\begin{equation}
 \lim_{N\to \infty} \frac{1}{N^d} \bE^{n,h_c(\beta,0)}_{N,\beta}\left[\sum_{x\in \gL_N} \delta_x\right]=
 \bP_{\beta}(\phi({\bf 0})=0)>0.
 \end{equation}

\end{rem}}

From \eqref{annealed}, it follows that $h_c(\beta,\alpha)\ge 0$ for all $\alpha>0$. Hence a natural question is whether this inequality is strict. Another one is whether the order of the  phase transition is modified by the introduction of disorder. These questions are intimately related to that of disorder relevance: ``Does the introduction of a small amount of disorder in the system change the characteristics of the phase transition?''

\subsection{Presentation of the main result}

The main result presented in this paper aims at giving  a detailed  picture of the phase transition for the disordered system which goes beyond the identification of the critical points and  of the free energy  critical exponent.
For the sake of simplicity we restrict ourselves to the case of dimension $2$ for which the contour decomposition (see Section \ref{contour}) allows for more intuitive proofs. We are very confident  that the method extends to the case of higher dimension.

\medskip

We show first that when $\beta$ is sufficiently large (larger than $\beta_0$ given by Theorem \ref{infinitevol}), then $h_c(\beta,\alpha)=0$
for all $\alpha>0$.
We  also prove that the behavior at the vicinity of $h_c(\beta,\alpha)$ is different from the one observed in the homogeneous case.
A major difference is that $\bar \tf_{\beta}(\alpha,h)$ grows quadratically at the right of $0$ in the sense that 
 there exist two constants $c_{\alpha,\beta}$ and $C_{\alpha,\beta}>0$ such that for every $h\in [0,1]$
\begin{equation}\label{quadra}
 c h^2 \le \bar \tf_{\beta}(\alpha,h)\le C h^2.
\end{equation}
However, whereas \eqref{quadra} might suggest it at first glance, the quantity $\bar \tf_{\beta}(\alpha,h) h^{-2}$ does not converge when $h\to 0+$. The sharp asymptotic of  $\bar \tf_{\beta}(\alpha,h)$ is rather given by a function which is piecewise affine and whose angular points form a geometric sequence accumulating at $0$.
In order to give an precise formulation to this we need to 
 introduce the quantity $\theta_1$ (which depends on $\beta$)  
 which governs the probability of seing thin spikes appearing in the infinite volume SOS interface
\begin{equation}\label{singledouble}
\theta_1:= \lim_{n\to \infty} e^{4\gb n}  \bP_{\gb} \left[ \phi({\bf 0})\ge  n\right],
\end{equation}
For the existence and positivity of $\theta_1$ we refer to \cite[Proposition 4.5]{cf:part1} (see also \cite[Lemma 2.4]{cf:PLMST}). We can now state our main result.

\begin{theorem}\label{mainres}(d=2)
 There exists $\beta_0$ such that for $\beta\ge \beta_0$, we have for every $\alpha>0$, $h_c(\gb,\alpha)=0$.
 Furthermore we have 
 
  \begin{equation}\label{crooti}
  \bar \tf_\gb(\alpha,h)\stackrel{h\to 0+}{\sim} G_{\gb}(\alpha,h)
 \end{equation}
 where 
 \begin{equation}\label{optimize}
  G_{\gb}(\alpha,h):= \max_{n \ge 0} \left[ \theta_1 e^{-4\gb n} h -\frac{1}{2}\theta^2_1 e^{-8\gb n}{\blue \frac{\Var(e^{\alpha\go_{\bf 0}})}{\bbE\left[e^{\alpha\go_{\bf 0}}\right]^2}}\right].
 \end{equation}

\end{theorem}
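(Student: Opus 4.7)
The plan is to prove matching upper and lower bounds $\bar\tf_\beta(\alpha,h)=(1+o(1))G_\beta(\alpha,h)$ as $h\to 0+$, with the parameter $n$ in the definition of $G_\beta$ playing the role of the optimal localization height of the interface above the substrate. The identity $h_c(\beta,\alpha)=0$ then follows as a byproduct: for any $h>0$, choosing $n$ so large that $e^{4\beta n}\geq \theta_1 V_\alpha/h$---where I abbreviate $V_\alpha:=\Var(e^{\alpha\go_{\bf 0}})/\bbE[e^{\alpha\go_{\bf 0}}]^2$---makes the corresponding term in the maximum strictly positive, and combined with the already-known bound $\bar\tf_\beta(\alpha,h)\geq 0$ and the lower bound below, this gives $\bar\tf_\beta(\alpha,h)>0$.

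\textbf{Lower bound.} Fix $n\geq 0$. By Proposition~\ref{freeen} the free energy does not depend on the boundary value, so I may work under the reference measure $\bP^n_\beta$, under which Theorem~\ref{infinitevol} and \eqref{singledouble} combined with vertical translation invariance yield $\bP^n_\beta[\delta_x=1]\sim \theta_1 e^{-4\beta n}$ for $x$ far from the boundary, with exponentially decaying correlations between distinct sites. On a mesoscopic box $B$ of side $\ell(h)$, I expand
$$\log Z^{n,h,\alpha,\go}_{B,\beta}=\log \bE^n_{B,\beta}\Big[\prod_{x\in B}e^{(\alpha\go_x-\gl(\alpha)+h)\delta_x}\Big]$$
to second order in the small quantities $\theta_1 e^{-4\beta n}$ and $h$. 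After taking $\bbE$, the first cumulant contributes $|B|\theta_1 e^{-4\beta n}h$, and the second cumulant---using the identity $\bbE[(e^{\alpha\go-\gl(\alpha)}-1)^2]=V_\alpha$ together with the approximate independence of spikes---contributes $-\tfrac{1}{2}|B|\theta_1^2 e^{-8\beta n}V_\alpha$. The superadditivity inequality \eqref{hicup2} glues translates of $B$ at negligible surface cost, and a bounded-differences inequality in $\go$ lifts the bound from $\bbE[\log Z]/|B|$ to the quenched free energy. Optimizing over $n\geq 0$ yields $\bar\tf_\beta(\alpha,h)\geq (1-o(1))G_\beta(\alpha,h)$.

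\textbf{Upper bound.} Decompose the partition function by the interface's dominant height, i.e.\ write $Z^{h,\alpha,\go}_{N,\beta}=\sum_{n\in\bbZ}Z^{h,\alpha,\go,(n)}_{N,\beta}$ where $Z^{h,\alpha,\go,(n)}_{N,\beta}$ restricts to configurations in which the most common value of $\phi$ on $\gL_N$ equals $n$. Comparing $Z^{h,\alpha,\go,(n)}_{N,\beta}$ with the $n$-boundary partition function costs an outer contour of perimeter $O(N)$ at height $n$, i.e.\ a multiplicative factor $e^{O(\beta Nn)}$ negligible at the $N^2$ scale as long as $n\ll N$. A change-of-measure / fractional moment argument in the spirit of \cite{cf:GL3}, fed by the second-moment estimates $\bbE[Z]\approx 1+h|B|\theta_1 e^{-4\beta n}$ and $\Var(Z)\approx V_\alpha|B|\theta_1^2 e^{-8\beta n}$, then produces the matching upper bound $\bbE[\log Z^{n,h,\alpha,\go}_{B,\beta}]/|B|\leq \theta_1 e^{-4\beta n}h-\tfrac{1}{2}\theta_1^2 e^{-8\beta n}V_\alpha+o(G_\beta(\alpha,h))$ for each $n$. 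Only $|n|\leq n_*(h)\sim (4\beta)^{-1}\log(1/h)$ contribute at leading order, so the sum over $n$ incurs only a logarithmic factor, absorbed in the $o(1)$; concentration of $\log Z$ in $\go$ finally upgrades the resulting annealed-style inequality to a quenched one.

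\textbf{Main obstacle.} The delicate point is that all error terms must be controlled at the scale $o(G_\beta(\alpha,h))$, which decays exponentially fast in $n_*(h)$, rather than the cheaper $o(h^2)$: this concerns the cluster-expansion remainders from Theorem~\ref{infinitevol}, the coarse-graining scale $\ell(h)$, the concentration of $\log Z$, and the change-of-measure cost in the upper bound. Moreover, the upper bound must handle uniformly the countably many competing heights $n\leq n_*(h)$, forcing the change-of-measure density and the fractional moment exponent to be tuned in a way effective for every such $n$ simultaneously. This uniform control over discrete, exponentially spaced competing minima---with no direct analog in continuous-interface pinning models like the LGFF treated in \cite{cf:GL3}---is what makes the argument substantially more demanding than in the existing disordered pinning literature.
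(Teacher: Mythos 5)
Your lower bound follows essentially the same route as the paper (fix the boundary condition at the optimal height $n_G(h)$, remove long contours, tile with mesoscopic cells, and run a second-moment/Taylor expansion of the logarithm, lifted to the quenched free energy by superadditivity and concentration); apart from glossing over the truncation on the number of contacts that is needed to make the second-order lower bound for $\log(\cdot)$ legitimate, that part is sound in outline.

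The upper bound, however, has a genuine gap. You decompose according to the single globally most common value of $\phi$ on $\gL_N$ and then compare $Z^{(n)}$ with the boundary-condition-$n$ partition function at the cost of one outer contour. But the event ``the most common value is $n$'' places no constraint on the \emph{local} behaviour of the field: a configuration contributing to $Z^{(n)}$ may contain a positive density of mesoscopic regions sitting at other heights $n'$, where the contact density is $p_{n'}$ rather than $p_n$ and where the disorder happens to be favourable. Consequently the per-$n$ bound you want, $\bbE[\log Z^{n,h,\alpha,\go}_{B,\beta}]/|B|\le p_n h-\tfrac12 p_n^2\,\Var(e^{\alpha\go})/\bbE[e^{\alpha\go}]^2+o(G_\beta(\alpha,h))$, cannot be extracted from the boundary condition alone: in the thermodynamic limit the free energy does not depend on the boundary condition (a fact you yourself invoke in the lower bound), precisely because the field can relocalize to the optimal height at surface-order cost; and on mesoscopic boxes the effective boundary condition of each sub-box is not controlled by your global event. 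What is needed --- and what constitutes the hard, novel part of the paper's argument --- is a classification of the local height in every cell, i.e.\ a conditioning on a coarse-grained description of \emph{all} large contours. The entropy of that classification is of order $e^{cN^2/L^2}$, not logarithmic in $h$, and controlling it is exactly what forces the paper to pass to fractional moments $\theta^{-1}\log\bbE[Z^\theta]$ combined with $(\sum_i a_i)^\theta\le\sum_i a_i^\theta$ and a Peierls-type bound on the probability of each coarse-grained contour collection (Lemma \ref{commenfer}); the sharp cell-wise estimate is then obtained via a H\"older change of measure with a tilting function built from Bernoulli densities $p_{n_x}$ varying from cell to cell, fed by the contact estimates of Proposition \ref{rouxrou} valid under contour restrictions. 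Your ``main obstacle'' paragraph correctly identifies uniformity over competing heights as the crux, but the mechanism you propose (a global dominant-height decomposition with logarithmic entropy) does not resolve it.
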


\begin{figure}[ht]
\begin{center}
\leavevmode
\epsfysize = 8 cm
\psfrag{h}{$h$}
\psfrag{G(a,h)}{$G_{\beta}(\alpha,h)$}
\psfrag{0}{$0$}
\epsfbox{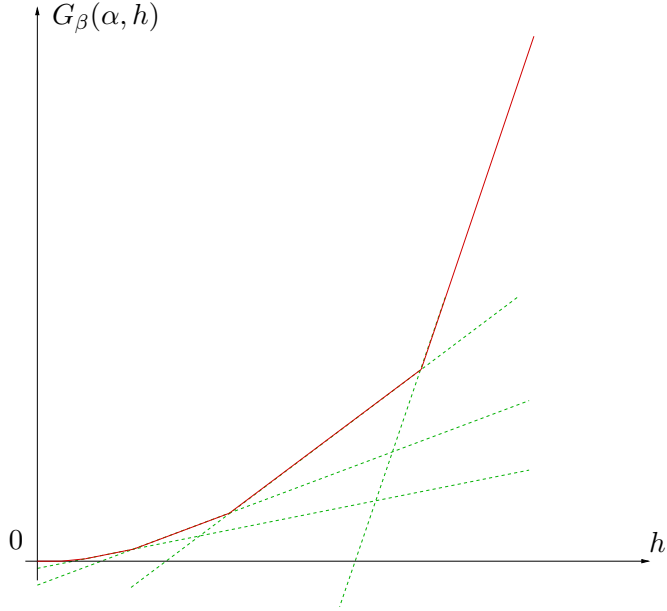}
\end{center}
\caption{\label{gah} {\blue
The asymptotic free energy equivalent $G_{\beta}(\alpha,h)$ as a function of $h$ (solid red curve) represented together with four of the affine functions appearing in the $\max$ in \eqref{optimize} (dotted lines). The derivative of $G_{\beta}(\alpha,h)$ is discontinuous along a geometric sequence with  ratio $e^{-4\beta}$, the slopes of the affine pieces of the curve follow a geometric sequence with ratio $e^{-8\beta}$.
To make the picture more readable we have chosen $\beta= (\log 2)/4$ even though our result requires $\beta$ to be large.}}
\end{figure}

{\blue Recalling \eqref{allorder} we have  $\frac{\Var(e^{\alpha\go_{\bf 0}})}{\bbE\left[e^{\alpha\go_{\bf 0}}\right]^2}=e^{\gl(2\alpha)-2\gl(\alpha)}-1$}.
The expression \eqref{optimize}  provides an acute picture of the localization strategy of $\phi$ in the near critical regime, which we choose to discuss in the following subsection. {\blue Note that a consequence of \eqref{crooti} is that     $\bar \tf_\gb(\alpha,\cdot)$ is differentiable at $0$ and  $\partial_h\bar \tf_\gb(\alpha,h)=0$. Thus the asymptotic contact fraction vanishes at criticality.}

\subsection{Interpretation of the result} 
The expression of $G_{\gb}(\alpha,h)$ can be explained by describing the localization strategy for the interface $\phi$.
Because of the exponential decay of correlation exposed in Theorem \ref{infinitevol}, for large $n$ under $\bE^n_{N,\beta}$, 
the process $(\delta_x)_{x\in \gL_N}$ looks like an IID collection of Bernoulli variables of parameter $\bP_{\beta}( \phi ({\bf 0})=n)$ which can be approximated by 
$p_n:= \theta_1 e^{-4\beta n}$.

\medskip

The partition function which is obtained by replacing $\bE^n_{N,\beta}$ by the IID Bernoulli distribution with parameter $p_n$ in the reduced partition function (recall \eqref{reduced}) is equal to
$$ \prod_{x\in \gL_N} (1+ p_n (e^{\alpha \go_x-\gl(\alpha)+h}-1)). $$
The corresponding free energy obtained by taking the logarithm, then the expectation and dividing by the volume of the box is simply  $\bbE \left[  \log ( 1+ p_n (e^{\alpha \go-\gl(\alpha)+h}-1) \right]$.
Using a Taylor expansion, {\blue there exists a constant $C>0$ such that   for all $h\in [0,1]$ and $n\ge 1$} 
\begin{equation*}
 \left|\bbE \left[  \log ( 1+ p_n (e^{\alpha \go-\gl(\alpha)+h}-1) \right]-
 p_n h -  \frac{1}{2} p^2 _n(e^{\gl(2\alpha)-2\gl(\alpha)}-1)\right| \le    C |h^2 p_n + h p^2_n + p^3_n|.  
\end{equation*}
Hence noticing that for the maximizing value of $n$, $p_n$ is of order $h$ we have (recall the definition of  $G_{\gb}(\alpha,h)$ in \eqref{optimize})
\begin{equation}
  \max_{n\ge 1}\bbE \left[  \log ( 1+ p_n (e^{\alpha \go_x-\gl(\alpha)+h}-1) \right]\stackrel{h\to 0+}{\sim}  G_{\gb}(\alpha,h).
\end{equation}
Hence a heuristic interpretation of the result \eqref{crooti} is that for small values of $h$, typically under $\bP^{h,\alpha,\go}_{N,\beta}$, in the bulk of the box, the distribution of $\phi$ looks like $\bP^n_{N,\beta}$ where $n=n_h$ is the value that maximizes \eqref{optimize}. The interface localizes around height $n_h$ because it is the best manner 
to have an optimal contact density $p_n$. The entropic cost of jumping from height $0$ to $n_h$ near the boundary of the box is of order $N^{d-1}$ and is  compensated by the gain in the bulk which scales like $N^d$.

\medskip

This phenomenon of localization around a typical height (and the associated behavior of the free energy) is reminiscent of the layering phenomenon observed for the SOS model in the presence of a solid substrate \cite{cf:ADM, cf:part1, cf:HL}. The difference here is that the layering phenomenon is not triggered by entropic repulsion but by the presence of disorder. To our knowledge, our result is the first reported case of disorder-induced layering.

\subsection{Comparison with results obtained for other models}
\label{ozermodel}
The study of disorder relevance for pinning models has been a very active field of study in the past two decades. The focus has first been put on the problem of $1$ dimensional pinning which corresponds to a random walk interacting with a defect line (we refer to \cite{cf:GB, cf:G} for an historical introduction, reviews and  references).
In this case, the contact set possesses a renewal structure which helps in the analysis. A series of work (cited in alphabetical order)  \cite{cf:Ken, cf:AZ, cf:BL, cf:DGLT, cf:GLT, cf:Trep} allowed for a full characterization of disorder relevance and in \cite{cf:GT} it was shown that the free energy transition is always smoother than quadratic. In particular {\blue for a large class of 
centered random walks on $\bbZ$  (which includes the nearest neighbor random walk and the SOS model in dimension $1$)} it was proved that there is a shift of the disordered critical point w.r.t.\ the annealed one \cite{cf:GLT}, in contrasts with our main result.

\medskip

More recently an extensive answer to the question of disorder relevance has been given for higher dimensional surface models, in the case where $\phi$ is the lattice Gaussian Free Field (GFF) on $\bbZ^d$ 
(and $\delta_x$ is replaced by $\ind_{\{\phi(x)\in [-1,1]\}}$) with $d\ge 2$ \cite{cf:CM1, cf:GL, cf:GL2, cf:GL3, cf:FF2}.
The case which offers most similarities with the low temperature SOS model is  that of dimension $d\ge 3$ (for which it is known that the variance of the field is bounded).
In that case it was shown in \cite{cf:GL, cf:GL3} that while the value of the critical  point  $h_c$ is not affected by the introduction of inhomogeneities, disorder smoothens the phase transition.  The homogeneous model displays a phase transition of first order (like the model studied in the present paper, cf. Proposition \ref{firstorder}), while the critical growth of the free energy is quadratic (also similar to what we observe here cf. \eqref{quadra}) when disorder is present.

\medskip

 However, unlike for the SOS model, the asymptotic of the free energy when $h$ tends to $0$ is given by a pure power (recall \eqref{optimize}).
More precisely, in the GFF case, the free energy asymptotics is given by \cite[Theorem 1.1]{cf:GL3}
$$ \max_{p \in  [0,1]} \left[ p h -\frac{1}{2}p^2 (e^{\gl(2\alpha)-2\gl(\alpha)}-1)\right]= \frac{h^2}{2(e^{\gl(2\alpha)-2\gl(\alpha)}-1) }. $$ 
{\blue The expression is very similar to that in \eqref{optimize}, the main difference being that in the in the optimization problem in the l.h.s.\ $p$ is allowed to assume any value in $[0,1]$ instead of being constrained to belong to the set  $\{ p_n \}_{n\ge 1}= \{ \theta_1 e^{-4\beta_n}\}_{n\ge 1}$. The reason for the similarity is that, for both the GFF and  the SOS model, if the boundary condition is set to a high value, then the contact set looks like a Bernoulli field. Hence in both cases at a small entropic cost, one can change the boundary coundition so that the distribution of $(\delta_x)_{x\in \gL_N}$ is ``close'' to that of a Bernoulli fields.
The difference is that for the GFF, the contact set can assume any specified density $p$ in an interval of the form $[0,p_0]$ because  the density of contact under the infinite volume measure varies continuously with the $\bbR$-valued boundary condition.}
For the SOS model on the contrary, the only values of $p$ for which $(\delta_x)_{x\in \gL_N}$ can emulate a Bernoulli field with density $p$ are given by $\bP_{\beta}( \phi ({\bf 0})=n) \sim \theta_1 e^{-4\beta n}$,
hence the optimization procedure is performed only along that sequence.

\subsection{Presence/Absence of layering transitions}
The form taken by the free energy asymptotic is very reminiscent of 
that found for the (non disordered) wetting problem for the SOS model (see \cite[Theorem 2.1]{cf:part1}). Moreover 
it was shown in \cite{cf:HL} (see also \cite{cf:CM} and references therein for earlier similar results) that each {\blue discontinuity point in the derivative of the}  asymptotic approximation is also present on the free energy curve. This sequence of points of non-differentiability  corresponds to a  (countably infinite) sequence of first order phase transitions  which correspond to transitions of the typical height assumed by $\phi$ from one integer value to another.

\medskip

It is thus a natural question to ask whether such a sequence of first order phase transitions is also observed for the disordered SOS model. We leave this question open for future research.

\subsection{Organization of the paper}

In Section \ref{prelim}, we introduce some tools and estimates for the low temperature SOS measure, that are necessary for the proof of our main result. Most results have appeared in other references. The proof of complementary results  follow very similar ideas and are included Appendix \ref{lazt} for the sake of completeness.

\medskip

In Section \ref{sec:dalower}, we prove a quantitative version of the lower bound on the free energy displayed in Theorem \ref{mainres}.
This proof relies on using a simple localization strategy : Fixing the boundary condition equal to $n$ and restricting to realizations of $\phi$ which does not display long level lines. This restriction has the effect of killing most of the long range correlations (in a sense it corresponds to considering a massive version of the SOS field). After this restriction is performed we split $\gL_N$ into fixed sized cells and estimate the contribution of each cell to the free energy by a second moment argument. While quite technical and requiring some fine understanding of the SOS model behavior to be implemented, the method is in spirit analogous to the one used in \cite{cf:GL3} for the lattice GFF.

\medskip

 In Section \ref{sec:daupper}, we prove a quantitative version of the upper bound on the free energy displayed in Theorem \ref{mainres}. This is in our opinion the most difficult and novel part of the proof. In order to show that there is no better strategy than the one used for the lower bound, we need an argument to prove that the field is locally flat in most regions of the space. This part of the proof requires a novel coarse graining argument, based on a ``large contour'' decomposition of the field.

 \medskip
 
 \subsubsection*{A remark about notation} In the proofs, in order to avoid the excessive use of indices, we use the letter $C$ for a generic constant which does not depend on the parameters $\alpha, h, \beta$ and $\go$ and whose value may change from an equation to another. 

\section{Technical preliminaries}\label{prelim}

\subsection{Contour representation}\label{contour}

We recall how to describe a function $\phi\in \gO_{\gL}$ using only its level lines.
The formalism of this section is identical to the one used in \cite{cf:part1, cf:HL}, and inspired by other papers making use of  contours to study the properties of the  SOS model
\cite{cf:ADM, cf:PLMST, cf:CM}.

\medskip

We let $(\bbZ^2)^*$ denote the dual lattice of $\bbZ^2$ (dual edges cross that of $\bbZ^2$ orthogonally in their midpoints).
Two adjacent edges  $(\bbZ^2)^*$ meeting at $x^*$ of are said to be \textit{linked} if they both 
lie on the same side of the line making an angle $\pi/4$ with the horizontal 
and passing through $x$. 
(see Figure \ref{linked}).
We define a \textit{contour sequence} to be  a finite sequence $(e_1,\dots,e_n)$ of distinct edges of $(\bbZ^2)^*$ which satisfies:
\begin{itemize}
 \item [(i)] For any $i=\lint 1,n-1\rint$, $e_i$ and $e_{i+1}$ have a common end point in $(\bbZ^2)^*$, $e_1$ and $e_{n}$ also have a common end point.
 \item [(ii)] If for $i\ne j$, if $e_i$, $e_{i+1}$, $e_j$ and $e_{j+1}$ meet at a common end point then  $e_i$, $e_{i+1}$ are linked and so are $e_j$ and $e_{j+1}$ (with the convention that $n+1=1$).
\end{itemize}
A \textit{geometric contour} $\tilde \gamma:=\{e_1,\dots,e_{|\tilde \gamma|}\}$ is a set of edges that forms a contour sequence when displayed in the right order.
The cardinality $|\tilde \gamma|$ of $\tilde\gamma$ is called the length of the contour.

\begin{rem}\label{sequence}
 Note that equivalently a contour sequence can be described by a sequence of lattice points $(x_1,\dots,x_n)$ such that $x_i\sim x_{i+1}$ for all $i\in \lint 1,n \rint$ (with the convention that $n+1=1$), which also has to satisfy some additional condition. This alternative description is useful when introducing a coarse grained version of contours in Section \ref{seccg}.
 \end{rem}

A \textit{signed contour} or simply \textit{contour} $\gamma=(\tilde \gamma,\gep)$ 
is a pair composed of a geometric contour and a sign $\gep\in \{+1,-1\}$. We let $\gep(\gamma)$ denote the sign associated with a contour $\gamma$, 
while with a small abuse of notation, 
$\tilde \gamma$ is used for the geometric contour associated with $\gamma$ when needed.
For $x^*\in (\bbZ^2)^*$ we write $x^*\in \gamma$ or $x^*\in \tilde \gamma$ when the point $x^*$ is visited by one edge of the geometric contour.

 \begin{figure}[ht]
\begin{center}
\leavevmode
\epsfysize = 3 cm
\epsfbox{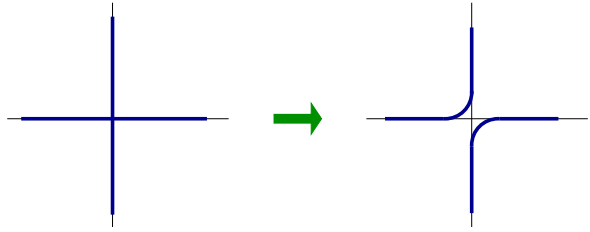}
\end{center}
\caption{\label{linked} 
The rule for splitting four edges meeting at one point into two pairs of linked edges.
To obtain the set of contours that separates $\{x \ : \ \phi(x)\ge h\}$ from   $\{x \ : \ \phi(x)< h\}$ for $h\in \bbZ$,
we draw all dual edges separating two sites $x$, $y$ such that $\phi(x)\ge h>\phi(y)$ and apply the above graphic rule for every dual vertex where four edges meet.
When several sets of level lines include the same contour, it corresponds to a cylinder of intensity $2$ or more for $\phi$.}
\end{figure}

We let $\bar \gamma$ denote the set of vertices of $\bbZ^2$ enclosed by $\tilde \gamma$.
We refer to $\bar \gamma$ as the \textsl{interior of} $\gamma$ and say that $|\bar \gamma|$ is the volume enclosed in the contour $\gamma$.
We let $\Delta_{\gamma}$, the neighborhood of $\gamma$, be the set of vertices of $\bbZ^2$ located either at a (Euclidean) distance $1/2$ from 
$\tilde\gamma$ 
(when considered as a subset of $\bbR^2$) 
or at a distance $1/\sqrt{2}$ from the meeting point of two non-linked edges.
We split the $\Delta_{\gamma}$ into two disjoint sets, the internal and the external neighborhoods of $\gamma$ 
(see Figure \ref{compa})
\begin{equation}\label{internextern}
\Delta^-_{\gamma}:=\Delta_{\gamma}\cap \bar \gamma \quad \text{ and } \quad  \Delta^+_{\gamma}:=\Delta_{\gamma}\cap \bar \gamma^{\cc}.
\end{equation}

\medskip

Given a finite set $\gL\subset \bbZ^2$ 
a contour $\gamma$ is said to be in $\gL$  if $\bar \gamma\subset \gL$.
We let $\cC$ denote the set of contours in $\bbZ^2$ and $\cC_{\gL}$ that of contours in $\gL$.
Given $\phi\in \gO_{\gL}$, we say that $\gamma \in \cC_{\gL}$  is a contour
for $\phi$ with boundary condition $n$, if there exists $k\ge 1$ such that 
\begin{equation}\label{cont+}
\min_{x\in \Delta^-_{\gamma}} \phi(x)= \max_{x\in \Delta^+_{\gamma}} \phi(x)+k\gep(\gamma).
\end{equation}
where in the above equation by convention we consider  that
$$\phi(x)=n \quad \text{ if } \quad x\in \gL^{\cc}.$$
The quantity $k$ appearing in \eqref{cont+} is called the \textit{intensity} of the contour and the
triplet $(\gamma,k)=(\tilde \gamma, \gep(\gamma), k)$
with $\gamma \in \cC$ and $k\in \bbN$ an intensity, is called a \textit{cylinder}. We say that  $(\gamma,k)$
is a cylinder for $\phi$ (with boundary condition $n$) if $\gamma$ is a contour of intensity $k$.
The cylinder function associated with $(\gamma,k)$ is defined on $\bbZ^2$ by
\begin{equation}\label{cylfunc}
\varphi_{(\gamma,k)}(x)=\gep(\gamma) k \ind_{\bar \gamma}(x).
\end{equation}
We use the notation $\hat \gamma$ to denote a generic cylinder and write $k(\hat \gamma)$ to denote its intensity. With some small abuse of notation, $\tilde \gamma$, $\gamma$ and $\bar \gamma$ denote the geometric contour, contour, and contour interior associated with $\hat \gamma$.
We let $\hat \Upsilon_n(\phi)$ denote the set of cylinders for $\phi$ with boundary condition $n$ and $\Upsilon_n(\phi)$ the corresponding set of contours.\\

\medskip

\noindent We say that $\gL$ is a \textit{simply connected} subset of $\bbZ^2$, if it can be expressed as the interior of a contour,
that is, if
\begin{equation} \label{connectedness}
 \exists \gamma_{\gL}\in \cC, \quad \bar\gamma_{\gL}=\gL.
\end{equation}
Note that, when $\gL$ is simply connected, 
an element
$\phi\in \gO_{\gL}$ is uniquely characterized by its cylinders. More precisely, we have  
\begin{equation}\label{cylinder}
\forall x\in \gL, \quad \phi(x):= n+\sum_{\hat \gamma \in \hat \Upsilon_n(\phi)}  \varphi_{\hat \gamma}(x).
\end{equation}
Furthermore, the reader can check that
\begin{equation}\label{express}
 \cH^n_{\gL}(\phi)=\sum_{\hat \gamma \in \hat \Upsilon_n(\phi)} k(\hat \gamma)|\tilde \gamma|.
\end{equation}
Of course not every set of cylinder is of the form $\hat \Upsilon_n(\phi)$ and we must introduce a notion of compatibility
which characterizes the ``right" sets of cylinder.

\medskip

Two cylinders $\hat \gamma$ and  $\hat \gamma'$ are said to be \textit{compatible} if they are
cylinders for the function $\varphi_{\hat \gamma}+\varphi_{\hat \gamma'}$.
This is equivalent to the three following conditions  being satisfied :
 (see Figure \ref{compa})
\begin{itemize}
 \item [(i)] $\tilde \gamma\ne \tilde\gamma'$ and $\bar \gamma \cap  \bar \gamma' \in\{\emptyset,\bar \gamma, \bar \gamma'\}$.
  \item [(ii)]  If $\gep= \gep'$ and $\bar\gamma\cap \bar \gamma'= \emptyset$, then
 then $\bar \gamma'\cap \Delta^+_{\gamma}=\emptyset$ .
 \item [(iii)] If $\gep\ne \gep'$ and $\bar \gamma'\subset \bar \gamma$ (resp.\ $\bar \gamma\subset \bar \gamma'$) then 
 $\bar \gamma'\cap \Delta^-_{\gamma}=\emptyset$  (resp. $\bar \gamma\cap \Delta^-_{\gamma'}=\emptyset$).
\end{itemize}
This first condition simply states that compatible contours do not cross each-other.
The conditions $\bar \gamma'\cap \Delta^+_{\gamma}=\emptyset$ and  $\bar \gamma'\cap \Delta^-_{\gamma}=\emptyset$  in  $(ii)$ and $(iii)$  
can be reformulated as: $\tilde \gamma$ and $\tilde \gamma'$ do not share edges, and if both $\tilde \gamma$ and $\tilde \gamma'$ possess
two edges adjacent to one vertex 
$x^*\in (\bbZ^2)^*$ 
 then the two edges in $\gamma$ are linked and so are those in $\gamma'$.
 
 \medskip
 
  \begin{figure}[ht]
\begin{center}
\leavevmode
\epsfxsize =5 cm
\epsfbox{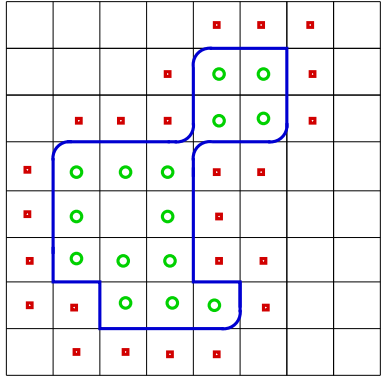}
\end{center}
\caption{\label{compa} 
A contour $\gamma$ represented with its internal ($\Delta^-_{\gamma}$ circles) and external ( $\Delta^+_{\gamma}$, squares) neighborhood.
To be compatible with $\gamma$, a contour $\gamma'$ of the same sign such that $\bar \gamma'\cap \bar \gamma= \emptyset$ cannot enclose any squares.
A compatible contour of opposite sign enclosed in $\gamma$ (such that $\bar \gamma'\subset \bar \gamma$) cannot enclose any circles.
}
\end{figure}

Note that the compatibility of two cylinders does not depend on their respective intensity, so that the notion can naturally be extended to signed contours :
the contours  $\gamma$ and $\gamma'$ are said to be compatible (we write $\gamma \mid \gamma'$) if the cylinders $(\gamma,1)$ and $(\gamma',1)$ are.
Two distinct non-compatible contours are said to be \textit{connected} (we write $\gamma \perp \gamma'$).

 \begin{figure}[ht]
\begin{center}
\leavevmode
\epsfysize = 7 cm
\epsfbox{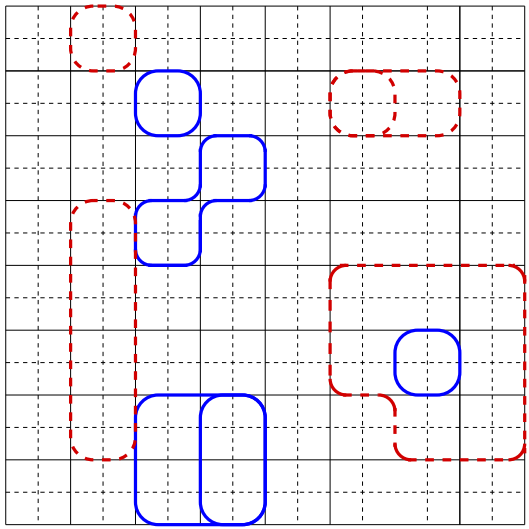}
\end{center}
\caption{\label{grid} 
A compatible collection of contour on the dual lattice (the primal lattice is represented in dotted lines).
Contours of different signs are displayed in different colors (red-dotted/blue-solid).
}
\end{figure}

\medskip

A (finite or countable) collection of cylinders (or of signed contours) 
is said to be a compatible collection if its elements are pairwise compatible (see Figure \ref{grid}).
The reader can check by inspection the following claim which establishes that the set of compatible collections of cylinders is in bijection with $\gO_{\gL}$  (simple connectivity is required to avoid having level lines enclosing holes).

\begin{lemma}\label{toott}
If $\gL$ is simply connected, then for any $\phi\in \gO_{\gL}$, $\hat \Upsilon_n(\phi)$ is a compatible collection of cylinders
and reciprocally, if $\hat \gG \subset \hat \cC_{\gL}$ is a compatible collection of cylinder in $\gL$
then its elements are the cylinders of the function 
$\sum_{\hat \gamma\in \hat \gG} \varphi_{\hat \gamma}.$
\end{lemma}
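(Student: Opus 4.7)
The plan is to prove both directions by working one level set at a time and letting the splitting rule handle the only geometric subtlety. Throughout I will use the canonical construction: for each $m \in \bbZ$, consider the set $L_m := \{x \in \gL : \phi(x) \geq m\} \cup \{x \in \gL^{\cc} : n \geq m\}$. The boundary edges in $(\bbZ^2)^*$ separating $L_m$ from its complement form a collection of dual edges, and applying the splitting rule at every dual vertex where four such edges meet (Figure \ref{linked}) decomposes them into a disjoint union of geometric contours. Each resulting geometric contour $\tilde\gamma$ inherits a sign: $\gep = +1$ if $m > n$ (so the interior is ``up'') and $\gep = -1$ if $m \leq n$. Grouping the $m$'s producing the same signed contour $\gamma$ gives the intensity $k(\hat\gamma)$, and this collection is precisely $\hat\Upsilon_n(\phi)$.

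For the first direction, I need to verify that $\hat\Upsilon_n(\phi)$ is pairwise compatible. Condition (i) is automatic: distinct level sets of $\phi$ are nested, so any two contours arising from different levels are either disjoint or strictly nested, and they cannot properly cross. For (ii) and (iii), suppose $\hat\gamma$ and $\hat\gamma'$ arise from levels $m$ and $m'$. If $\gep(\gamma) = \gep(\gamma')$ and $\bar\gamma \cap \bar\gamma' = \emptyset$, an overlap $\bar\gamma' \cap \Delta^+_\gamma \neq \emptyset$ would force a site $x \in \Delta^+_\gamma$ to lie in $L_{m'}$ while $\phi(x) < m$ (or the symmetric statement), contradicting the nesting of $L_m \supseteq L_{m'}$ (up to sign). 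The opposite-sign case is analogous. The linking condition at four-edge meeting points is precisely how the splitting rule was defined, so whenever two contours meet at a vertex the pairs of edges they contribute are linked within each contour.

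For the reverse direction, given a compatible collection $\hat\Gamma$ I set $\psi := n + \sum_{\hat\gamma \in \hat\Gamma} \varphi_{\hat\gamma}$ and claim $\hat\Upsilon_n(\psi) = \hat\Gamma$. Compatibility (i) ensures the sum is well-defined and that the geometric contours in $\hat\Gamma$ can be recovered from the level sets of $\psi$: for each $\hat\gamma \in \hat\Gamma$, one checks using \eqref{internextern} and conditions (ii)--(iii) that $\psi$ takes a constant value on $\Delta^-_\gamma$ and on $\Delta^+_\gamma$ differing by exactly $\gep(\gamma) k(\hat\gamma)$, so \eqref{cont+} holds with intensity $k(\hat\gamma)$, meaning $\hat\gamma \in \hat\Upsilon_n(\psi)$. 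Conversely, any cylinder of $\psi$ has its geometric contour supported on edges where some summand $\varphi_{\hat\gamma}$ jumps, and the compatibility conditions prevent two different compatible contours from fusing into a single contour of $\psi$ --- this is exactly what (ii)--(iii) encode at shared dual vertices. Simple connectivity of $\gL$ is used here to rule out ``ghost'' contours wrapping around holes that would not be visible from the integer-valued function $\psi$.

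The main obstacle, and the reason the authors merely say ``check by inspection,'' is the local analysis at dual vertices where two distinct contours of $\hat\Gamma$ share an endpoint. One has to verify that the compatibility conditions (ii)--(iii) imply that the four edges incident at such a vertex split into two linked pairs in exactly the way prescribed by Figure \ref{linked}, and that no other partition is consistent with the level-set description of $\psi$. Once this case analysis (essentially four configurations up to symmetry: same sign/different sign, crossing geometry/touching geometry) is carried out, both directions follow and the maps $\phi \mapsto \hat\Upsilon_n(\phi)$ and $\hat\Gamma \mapsto n + \sum_{\hat\gamma} \varphi_{\hat\gamma}$ are mutually inverse bijections.
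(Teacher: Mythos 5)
Your blueprint is the standard one and is essentially what the paper has in mind: the paper offers no proof of this lemma beyond ``the reader can check by inspection,'' and your level-set construction of $\hat\Upsilon_n(\phi)$ together with the inverse map $\hat\Gamma\mapsto \psi:= n+\sum_{\hat\gamma\in\hat\Gamma}\varphi_{\hat\gamma}$ is exactly the intended inspection. The forward direction is fine as an outline, with the local case analysis at shared dual vertices correctly identified as the crux, even though you describe rather than execute it.

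The reverse direction, however, contains a concrete false step. You assert that ``$\psi$ takes a constant value on $\Delta^-_\gamma$ and on $\Delta^+_\gamma$.'' This does not follow from compatibility: conditions (ii)--(iii) impose nothing on a \emph{same-sign} contour $\gamma'$ nested inside $\gamma$, so $\bar\gamma'$ may intersect $\Delta^-_\gamma$ (take $\phi$ equal to $2$ on a small square touching the inner boundary of a larger square where $\phi=1$, and $0$ outside); similarly an \emph{opposite-sign} contour with interior disjoint from $\bar\gamma$ may intersect $\Delta^+_\gamma$. In both cases $\psi$ is non-constant on the corresponding neighborhood. What must be proved instead is the min/max identity \eqref{cont+} itself: that $\min_{\Delta^-_\gamma}\psi$ is attained at a site belonging to no contour strictly nested in $\gamma$, and $\max_{\Delta^+_\gamma}\psi$ at a site belonging to no contour other than those containing $\bar\gamma$. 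This needs a short covering argument --- a single compatible contour cannot contain all of $\Delta^-_\gamma$ (resp.\ $\Delta^+_\gamma$) without its interior failing to be simply connected, and several of them cannot jointly do so because condition (ii) applied to any such pair forbids their interiors from being adjacent, leaving an uncovered site between them. Without this, the claim $\hat\Upsilon_n(\psi)=\hat\Gamma$, and in particular that the intensities come out equal to $k(\hat\gamma)$, does not follow from what you wrote. The remainder of your outline (the role of the splitting rule at shared vertices, and of simple connectivity in excluding level lines around holes) is sound.
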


Using \eqref{express} and the contour representation above, we can rewrite the partition function $\cZ_{\gL,\gb}$ in a new form.
We let $\cK(\gL)$ and $\hat \cK(\gL)$ denote the set of compatible collections of contour and cylinders in $\gL$.
We have 
\begin{equation}
 \cZ_{\gL,\gb}= \sum_{\hat \gG \in \hat \cK(\gL)} \prod_{\hat \gamma \in \hat \gG} e^{-k(\hat \gamma)\gb|\tilde \gamma|}.
\end{equation}
Summing over all the possible intensities, we obtain
\begin{equation}\label{contourdecomp}
 \cZ_{\gL,\gb}=\sum_{\gG \in  \cK(\gL)} \prod_{\gamma \in  \gG} \frac{1}{e^{\gb|\tilde \gamma|}-1}.
\end{equation}
This rewriting of the partition function entails that under $\bP_{\gL,\beta}$ the distribution of $\Upsilon_n(\phi)$ is a product measure conditioned to compatibility. In particular as the event ``being compatible'' is a decreasing event for {\blue the inclusion relation in the power set $\cP(\cC_{\gL})$}, Harris inequality \cite{cf:Harr}  implies that $\bP_{\gL,\beta}$ is stochastically dominated by this product measure.
To make this more explicit, let  $\bQ_{\gL,\gb}$ be the distribution of a random element $\chi\in \cP(\cC_{\gL})$ (the power set of $\cC_{\gL}$) under which
the variables $\chi(\gamma):=\ind_{\{\gamma\in \chi\}}$ are independent and 
\begin{equation}\label{defgroq}
\bQ_{\gL,\gb}\left( \gamma\in \chi \right)=e^{-\gb|\tilde \gamma|}. 
\end{equation}
We refer to \cite[Lemma 4.3 and 4.4]{cf:part1} for more details of the proof of the following statement.
\begin{lemma}\label{restrict}
The distribution of the cylinders in $\hat  \Upsilon_n(\phi)$ can be described as follows:
\begin{itemize}
 \item [(A)] We have 
\begin{equation}\label{condit}
 \bP^n_{\gL,\gb}[\Upsilon_n(\phi) \in \cdot]:= \bQ_{\gL,\gb}\left[ \chi \in \cdot \ | \  \chi \text{ is a compatible collection } \right].
\end{equation}
In particular the distribution of $\Upsilon_n(\phi)$ is stochastically dominated (for the inclusion) by $\bQ_{\gL,\gb}$.
\item[(B)]
Conditionned to the realization $\Upsilon_n(\phi)$ the intensities $(k(\hat \gamma))_{\hat \gamma \in \hat \Upsilon_n(\phi)}$ are independent geometric variable of parameter $e^{-\beta |\tilde \gamma|}$ 
\end{itemize}

\end{lemma}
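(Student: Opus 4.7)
The plan is to proceed directly from the bijective correspondence between configurations and compatible cylinder collections provided by Lemma \ref{toott}, together with the Hamiltonian identity \eqref{express}.

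First I would translate the Gibbs weight into the cylinder language. Lemma \ref{toott} tells us that on a simply connected $\gL$, the map $\phi \mapsto \hat \Upsilon_n(\phi)$ is a bijection between $\gO_\gL$ and $\hat \cK(\gL)$. Combined with \eqref{express}, this yields that the $\bP^n_{\gL,\gb}$-probability that $\hat \Upsilon_n(\phi)$ equals a prescribed compatible collection $\hat \gG$ is proportional to $\prod_{\hat \gamma \in \hat \gG} e^{-\gb k(\hat \gamma)|\tilde \gamma|}$. Summing over the intensities $k(\hat \gamma) \in \{1,2,\dots\}$ for each $\gamma \in \gG$ independently then gives
\begin{equation*}
\bP^n_{\gL,\gb}\bigl[\Upsilon_n(\phi)=\gG\bigr] \;=\; \frac{1}{\cZ_{\gL,\gb}} \prod_{\gamma\in \gG} \frac{1}{e^{\gb|\tilde \gamma|}-1}
\end{equation*}
for every $\gG \in \cK(\gL)$, which is precisely \eqref{contourdecomp} restricted to a single summand.

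Next I would match this with $\bQ_{\gL,\gb}$. Factoring out the common normalizing product $\prod_{\gamma\in\cC_\gL}(1-e^{-\gb|\tilde \gamma|})$, one checks that $\bQ_{\gL,\gb}(\chi=\gG)$ is proportional to $\prod_{\gamma\in\gG}(e^{\gb|\tilde\gamma|}-1)^{-1}$ on every $\gG \subset \cC_\gL$. Consequently the ratio $\bP^n_{\gL,\gb}[\Upsilon_n(\phi)=\gG]/\bQ_{\gL,\gb}(\chi=\gG)$ is constant on $\cK(\gL)$ and zero outside, which is exactly \eqref{condit}. For the stochastic domination claim, the key observation is that ``$\chi$ is a compatible collection'' is a decreasing event in the inclusion partial order on $\cP(\cC_\gL)$: compatibility is a pairwise condition, so removing contours from a compatible collection preserves compatibility. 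The Harris inequality applied to the product measure $\bQ_{\gL,\gb}$ and an increasing test function then yields $\bQ_{\gL,\gb}[f \mid \chi \text{ compatible}] \le \bQ_{\gL,\gb}[f]$, giving (A).

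Finally, for (B), I would go back to the joint cylinder distribution: conditionally on $\Upsilon_n(\phi)=\gG$, the Gibbs weight factorizes as $\prod_{\gamma\in\gG} e^{-\gb k(\hat\gamma)|\tilde\gamma|}$, so the intensities $(k(\hat\gamma))_{\gamma \in \gG}$ are independent with $\bP^n_{\gL,\gb}[k(\hat\gamma)=k \mid \Upsilon_n(\phi)=\gG]=(1-e^{-\gb|\tilde\gamma|})e^{-\gb(k-1)|\tilde\gamma|}$, i.e.\ geometric with the parameter stated. There is essentially no hard step here; the only thing to be careful about is the normalization factor and the bookkeeping of the bijection $\phi \leftrightarrow \hat\Upsilon_n(\phi)$, so the main (minor) obstacle is really just keeping the product over all contours of $\cC_\gL$ separated from the product over the realized $\gG$ when comparing the two measures.
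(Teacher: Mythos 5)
Your proposal is correct and follows essentially the same route as the paper (and the reference it cites): the bijection of Lemma \ref{toott} plus \eqref{express} yields the product-form weight \eqref{contourdecomp}, matching it with $\bQ_{\gL,\gb}$ conditioned on compatibility, and Harris's inequality applied to the decreasing event ``compatible'' gives the domination, while the factorized sum over intensities gives the geometric conditional law in (B). No gaps.
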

\noindent We end up this section by introducing some notation an terminology which we use in the remainder of the paper.

\medskip
\noindent\emph{SOS measure with contour restrictions}:
For a fixed  set of contours $\bL\subset \cC_{\gL}$, let us define the measure 
$\bP^n_{\bL,\gL,\gb}$ which is obtained by conditioning all the contours of $\phi$ to belong to $\bL$ 
\begin{equation}
 \bP^n_{\bL,\gL,\gb}:= \bP^n_{\gL,\gb}\left[ \cdot \ | \  \Upsilon_n(\phi)\subset \bL \right]
\end{equation}
{\blue The proof of  Lemma \ref{restrict}} extends to   $\bP^n_{\bL,\gL,\gb}$ in the sense  that  under the measure $\bP^n_{\bL,\gL,\gb}$,  the distribution of $\Upsilon_n(\phi)$ is stochastically dominated by that of $\chi \cap \bL$ under $\bQ_{\gL,\beta}$ 
(and thus also by that of $\chi$). {\blue This is because the distribution $\bP^n_{\gL,\gb}\left[ \Upsilon_n(\phi) \in \cdot \ | \  \Upsilon_n(\phi)\subset \bL \right]$ is a product distribution on $\cP(\bL)$  constrained to a decreasing event for the inclusion in $\cP(\bL)$ (compatibility).}

\medskip

\noindent\emph{External contours}: Given $\gG$ a compatible collection of contours and $\gamma \in \gG$, we say that $\gamma$ is an \textit{external contour} in $\gG$
if $\bar \gamma$ is maximal in $\gG$ for the inclusion ({\blue this time we are considering the inclusion relation in $\bbZ^2$}), that 
is 
\begin{equation}\label{defexternal}
 \forall \gamma'\in \gG, \quad \bar \gamma' \subset \bar\gamma \text{ or } \bar \gamma' \cap \bar \gamma= \emptyset.
\end{equation}
We let $\Upsilon^{\mathrm{ext}}_n(\phi)$ denote the set of external contours in $\Upsilon_n(\phi)$.
{\blue For $A\subset \bbZ^d$ we let $\phi \restrict_{A}$ denote the restriction of $\phi$ to the set $A$.}
Note that under $\bP^n_{\gL,\gb}$ 
(and also under  $\bP^n_{\bL,\gL,\gb}$ for arbitrary $\bL$),
conditionned to the realization of $\Upsilon^{\mathrm{ext}}_n(\phi)$ the restrictions of $\phi$ to the interior of external contours $\phi \restrict_{\bar \gamma}$,  $\gamma \in \Upsilon^{\mathrm{ext}}_n(\phi)$  are independent.

\subsection{Peak probabilities estimates for the SOS measure}\label{sossec}

We introduce here a result   concerning the asymptotic probability of observing high points for $\phi$ under  $\bP_{\bL,\gL,\gb}$.
Having estimates which are also valid with a contour restriction is of crucial importance in our proofs.
To state the result, we need to introduce the following notion of distance between a vertex and the complement of a finite set of contours
\begin{equation}\label{deffdiss}
d(x,\bL^{\cc}) := \min\{ \gamma \in \cC\setminus \bL, \max_{y\in \bar \gamma} |x-y|\}.
\end{equation}
Note that if $ \bL\supset \cC_{\gL}$ then $d(x,\bL^{\cc})$ is larger than the distance from $x$ to $\gL^{\cc}$.
The following estimates are proved in Appendix \ref{lazt}.
\begin{proposition}\label{rouxrou}
There exists a constant $C$ such that if $\gb\ge \beta_0$  sufficiently large, then such for any choice 
$\gL$, $\bL$
\begin{itemize}
 \item [(i)] For any $x\in \gL$ for any $n\ge 0$ we have
 \begin{equation}\label{zups}
 \bP_{\bL,\gL,\gb}[\phi(x)\ge n]\le C e^{-4\gb n}.
 \end{equation}
 If in addition,  the positive contour of length $4$ surrounding $x$ (which we denote by $\gamma_x$ is in $\bL$ we have
 \begin{equation}\label{czups}
   \bP_{\bL,\gL,\gb}[\phi(x)= n]\ge C^{-1} e^{-4\gb n}.
 \end{equation}

  \item [(ii)] For any $x,y\in \gL$ for any $n\ge 0$ we have
 \begin{equation}\label{zup2s}
 \bP_{\bL,\gL,\gb}[\min(\phi(x), \phi(y))\ge n]\le C e^{-6\gb n}.
 \end{equation}
  \item [(iii)] For any $x\in \gL$ for any $n\ge 0$ we have
 \begin{equation}\label{zup3s}
\left| \bP_{\bL,\gL,\gb}[\phi(x)\ge n]-\theta_1 e^{-4\beta n}\right| \le C\left( e^{-6 \beta n}+ e^{- d(x,\bL^{\cc})} \right). 
 \end{equation}
\item[(iv)] For any $x_1,\dots, x_k$, we have for $n\ge 0$
\begin{equation}\label{lotzof}
 \bP_{\bL,\gL,\gb}\left[\min_{i\in \lint 1, k\rint} \phi(x_i)\ge n\right]\le C_k e^{- 3\gb \sqrt{k}}.
 \end{equation}
\end{itemize}

\begin{rem}
Let us discuss here about the requirement on $\beta$ for our estimates to be valid.
 The proof of \eqref{zups}-\eqref{zup2s}  and \eqref{lotzof} are essentially based on a Peierls-type argument \cite{cf:Pei} and does not require more than the following condition
 \begin{equation*}
   \sum_{\{ \gamma\in \cC  \ : \ {\bf 0\in \bar \gamma}\}} e^{-\beta |\tilde \gamma|} <\infty
 \end{equation*}
which is valid when $\beta\ge 1$. 
On the contrary \eqref{zup3s} requires the convergence of the cluster expansion associated with the contour decomposition which is a more stringent condition. Looking at the discussion in  \cite[pp 493]{cf:KP}
and considering that the number of contours of length $n$ containing ${\bf 0}$ is bounded above by $n 3^{n-2} \le 4^n$ and that the weight associated with contours satisfy for $\beta\ge 1$,
$$(e^{\beta |\tilde \gamma|}-1)^{-1}\le  e^{-\frac{9\beta}{10} |\tilde \gamma|},$$
 we obtain that our estimates are valid as long as 
 $$\beta \ge \frac{10}{9}(\log 4 +1.6).$$
Hence in all our statements, $\beta$ sufficiently large can be replaced by $\beta \ge 3.5$. 
 \end{rem}

\end{proposition}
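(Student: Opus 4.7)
The plan is to exploit the cylinder decomposition of $\phi$ together with the stochastic domination and conditional geometric-intensity structure provided by Lemma~\ref{restrict}. Under $\bP_{\bL,\gL,\gb}$ one has $\phi(x) = \sum_{\hat\gamma \in \hat\Upsilon_n(\phi)\,:\, x \in \bar\gamma} \gep(\gamma)\,k(\hat\gamma)$, and each cylinder $(\gamma, k)$ carries Boltzmann weight $e^{-\gb k |\tilde\gamma|}$. For the upper bound in (i), the event $\{\phi(x) \geq n\}$ forces the existence of a compatible collection of positive-sign cylinders around $x$ whose total signed intensity is at least $n$. Since the shortest geometric contour encircling $x$ has length $4$, one gains a factor $e^{-4\gb n}$ from the intensities alone, and the remaining sum over contour shapes is a classical Peierls series which converges when $\gb$ is sufficiently large (as recalled in the remark after the proposition). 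For the lower bound, I would use the hypothesis $\gamma_x \in \bL$ to insert, via the sub-additivity \eqref{hicup2} and Lemma~\ref{restrict}, the explicit cylinder $(\gamma_x, n)$ into a representative configuration, producing a weight of exactly $e^{-4\gb n}$ after comparing partition functions.

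For (ii), the same Peierls strategy applies but now one needs enough contour perimeter to encircle both $x$ and $y$: either a single positive contour contains both in its interior (forcing $|\tilde\gamma| \geq 6$ for distinct $x, y$, the minimal case being a two-cell domino), or two disjoint contours of length at least $4$ each surround $x$ and $y$ separately; in either scenario the combined intensity-weighted Peierls cost is at least $e^{-6\gb n}$, and summing over topologies gives the claimed bound.

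Estimate (iii) is the sharp one and requires a full cluster expansion. Starting from the cylinder representation of $\bP_{\bL,\gL,\gb}[\phi(x) \geq n]$, I would expand as a convergent polymer series (valid under the threshold $\gb \geq 3.5$ mentioned in the remark) and isolate the dominant term from the single cylinder $(\gamma_x, k)$ with $k \geq n$, which yields $\theta_1 e^{-4\gb n}$ in the infinite-volume limit, with the constant $\theta_1$ identified via \eqref{singledouble} and the existence result of \cite{cf:part1}. The two sources of correction are higher-order cluster terms (involving two or more contours through $x$), which contribute $O(e^{-6\gb n})$, and finite-$\bL$ truncation, which decays like $e^{-d(x, \bL^{\cc})}$ through the standard exponential tails of cluster weights.

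Finally, for (iv), the event $\{\min_i \phi(x_i) \geq n\}$ with $n \geq 1$ requires that the interiors of positive-sign contours in $\Upsilon_n(\phi)$ jointly cover $\{x_1, \dots, x_k\}$. By the discrete isoperimetric inequality in $\bbZ^2$, the total perimeter of any such covering family is at least of order $\sqrt{k}$, and the Peierls sum over compatible coverings (using again convergence for $\gb \geq 3.5$) yields the $C_k e^{-3\gb\sqrt{k}}$ bound, the constant $3$ being a convenient choice after absorbing combinatorial entropy. The main obstacle I anticipate is (iii): tracking the $O(e^{-6\gb n})$ error term precisely and identifying the prefactor with $\theta_1$ uniformly in $\bL$ requires delicate cluster-expansion bookkeeping in the spirit of \cite{cf:KP}, and the passage from finite to infinite volume must be uniform in $\bL$, which is where the distance $d(x, \bL^{\cc})$ enters the bound.
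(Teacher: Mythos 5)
Your overall strategy---the cylinder representation, the stochastic domination by the product measure $\bQ_{\gL,\gb}$ from Lemma~\ref{restrict}, Peierls-type sums for (i), (ii), (iv), and cluster-expansion input for the sharp estimate (iii)---is the same as the paper's, which for \eqref{zups} and for the two hard ingredients of \eqref{zup3s} simply cites \cite{cf:part1} and \cite{cf:HL}. For (iii) in particular the paper does not redo any polymer expansion: it uses a triangle inequality separating $|\bP_{\bL,\gL,\gb}[\phi(x)\ge n]-\bP_{\gb}[\phi(x)\ge n]|$, controlled by the decay-of-correlations estimate of \cite[Proposition 3.5]{cf:HL} (this is where $d(x,\bL^{\cc})$ enters), from $|\bP_{\gb}[\phi(x)\ge n]-\theta_1 e^{-4\gb n}|$, which is \cite[Proposition 4.6]{cf:part1}. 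Your plan to re-derive both from scratch is legitimate but substantially more work than you acknowledge; the reduction to the two cited statements is the efficient route, and it also avoids having to make the expansion uniform in $\bL$ by hand. For the lower bound \eqref{czups}, the paper's mechanism is to condition on the decreasing event that no contour interfering with $\gamma_x$ is present (this event has uniformly positive probability by the domination of Lemma~\ref{restrict}), after which $\gamma_x$ appears independently with probability $e^{-4\gb}$ and carries a geometric intensity; your ``insert the cylinder and compare partition functions'' variant achieves the same thing.

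Two places where your sketch hides the actual content. In (ii), the dichotomy ``one contour encloses both points, or two disjoint contours enclose them separately'' is not exhaustive: in general part of the height at $x$ and at $y$ is contributed by contours enclosing both and part by contours enclosing only one of them. The paper decomposes over $p$, the total intensity carried by contours enclosing both points, bounds the three resulting increasing events by independent $\bQ_{\gL,\gb}$-probabilities of order $e^{-6\gb p}$ and $e^{-4\gb(n-p)}$ each, and sums $e^{-8\gb n+2\gb p}$ over $p\le n$; the worst case $p=n$ produces the exponent $6\gb n$. You need this interpolation, not just the two endpoints. Similarly in (iv), your isoperimetric observation is correct (a covering family has total length at least $4\sqrt{k}$ since $\sum_i\sqrt{k_i}\ge\sqrt{\sum_i k_i}$), but the weighted sum over all covering cylinder families subject to the constraint that \emph{every} $x_i$ receives total intensity at least $n$ does not follow from a one-shot union bound: the paper runs an induction on $k$, splitting off the cylinders enclosing all $k$ points and using $\sqrt{k-j}+\sqrt{j}\ge\sqrt{k-1}+1$ for the strict subsets. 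Finally, note that the exponent in \eqref{lotzof} should carry a factor $n$ (the appendix proves $C_k e^{-3\gb\sqrt{k}\,n}$, and this is what is used in \eqref{leptit}); your write-up reproduces the typo in the statement.
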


\section{Lower bound on the free energy}\label{sec:dalower}

\subsection{Result and decomposition of the proof}

The object of this section is to prove a quantitative version of the lower bound part of \eqref{crooti}.
\begin{proposition}\label{dalower}

There exists  $\gep$ such that for all $\gb$ sufficiently large and all $\alpha>0$, there exists $C(\alpha,\beta)>0$ such that for all $h>0$ 
  \begin{equation}\label{zook}
\bar \tf_{\beta}(\alpha,h) \ge  G_{\beta}(\alpha,h)- C(\alpha,\beta) h^{2+\gep}.
 \end{equation}
\end{proposition}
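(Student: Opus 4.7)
The plan is to prove, for each fixed non-negative integer $n$, the one-sided estimate
\begin{equation*}
\bar\tf_\beta(\alpha,h) \geq \theta_1 e^{-4\beta n} h - \tfrac{1}{2}\theta_1^2 e^{-8\beta n}\, \tfrac{\Var(e^{\alpha\go_{\mathbf 0}})}{\bbE[e^{\alpha\go_{\mathbf 0}}]^2} - C(\alpha,\beta)\, h^{2+\gep};
\end{equation*}
taking the maximum over $n$ (it suffices to consider $n \leq C|\log h|/\beta$, since beyond that range the right-hand side is already $\leq 0$) then yields \eqref{zook}. By Proposition \ref{freeen}, the free energy does not depend on the boundary condition, so one may compute $\bar\tf_\beta(\alpha,h)$ with boundary condition $n$ throughout. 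Under this boundary condition, Proposition \ref{rouxrou}(iii) (applied after translating $\phi\mapsto \phi-n$ and using the $\phi\leftrightarrow-\phi$ symmetry of the SOS measure) shows that the contact event $\{\phi(x)=0\}$ has probability $p_n(1+o(1))$ with $p_n := \theta_1 e^{-4\beta n}$, uniformly over $x$ sufficiently deep in the domain. This matches the heuristic in the introduction: the field $(\delta_x)$ behaves like an approximately IID Bernoulli field of parameter $p_n$.

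The core of the strategy is a cell decomposition combined with a second moment computation. Fix a contour cut-off $L \sim C|\log h|/\beta$ and a cell size $\ell$ in an intermediate range $L \ll \ell \ll 1/h$, tile $\gL_N$ into disjoint $\ell$-boxes $C_1,\ldots,C_M$, and restrict the partition function to configurations $\phi$ whose contours all have diameter at most $L$. Such contours cannot bridge two cells that are at least $L$ apart, so up to boundary effects of relative order $L/\ell$ the contributions $Z_i$ of the cells become mutually independent, and it suffices to estimate $\bbE[\log Z_i]$ for a single cell. Using Lemma \ref{restrict} together with the Bernoulli stochastic upper bound $\bQ_{\gL_N,\beta}$ from \eqref{defgroq}, the probability cost of the contour restriction is at most $\exp(-cN^2 e^{-\beta L})$, which is negligible in front of $h^{2+\gep}N^2$ for $L$ chosen with a large enough constant.

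For each cell, a direct computation using $\bbE[e^{\alpha\go-\gl(\alpha)}]=1$ gives $\bbE[Z_i] = \bE^n_{\bL,C_i,\beta}[\exp(h\sum_x\delta_x)]$, while introducing an independent copy $\phi'$ yields
\begin{equation*}
\bbE[Z_i^2]/\bbE[Z_i]^2 = (\bE^n_{\bL,C_i,\beta})^{\otimes 2}\Bigl[\prod_{x\in C_i}(1+K\delta_x\delta_x')\Bigr]\cdot(1+O(h))
\end{equation*}
with $K := e^{\gl(2\alpha)-2\gl(\alpha)}-1$. Proposition \ref{rouxrou}(iii)--(iv) lets us replace $\bE^n_{\bL,C_i,\beta}[\delta_x]$ by $p_n(1+o(1))$ and to bound joint-contact corrections; combined with the short-range decorrelation of $(\delta_x)$ coming from the contour restriction, this gives $\log\bbE[Z_i] = hp_n|C_i|(1+o(1))$ and $\bbE[Z_i^2]\leq \bigl(\bbE[Z_i]\bigr)^2\exp\bigl(Kp_n^2|C_i|(1+o(1))\bigr)$. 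A Chebyshev/Paley--Zygmund argument, using the deterministic lower bound $Z_i \geq \exp(-C p_n|C_i|)$ obtained by restricting to $\phi\equiv n$ on $C_i$ to handle the low-probability tail, converts these moments into the quenched bound $\bbE[\log Z_i] \geq |C_i|\bigl(p_n h -\tfrac{1}{2}K p_n^2\bigr)(1+o(1))$, which is the cell analogue of the claimed free-energy lower bound. The principal obstacle is the calibration of $\ell$: it must be large enough that $|C_i|p_n \to \infty$ (so the Bernoulli-type first moment approximation and the second-moment concentration are both effective), yet small enough that $|C_i|Kp_n^2$ stays bounded (so that the second-moment ratio is close to $1$). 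The intermediate regime $h^{-1/2}\ll\ell\ll h^{-1}$ together with the constraint $p_n\asymp h$ at the optimal $n$ produces the claimed exponent $2+\gep$ after summing all the error sources (the $e^{-2\beta n}$ correction in Proposition \ref{rouxrou}(iii), the contour restriction cost, the boundary-effect loss of order $L/\ell$, and the higher-order Taylor terms in $\log(1+\cdot)$).
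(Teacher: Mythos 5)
Your overall architecture (boundary condition $n$, cut-off on contour lengths, tiling into cells, second-moment computation cell by cell) is the same as the paper's, but the calibration of the cell size is backwards, and this breaks the concentration step. You insist on $|C_i|p_n\to\infty$ so that each cell contains many contacts, and then propose to extract $\bbE[\log Z_i]\ge \log\bbE[Z_i]-\tfrac12 Kp_n^2|C_i|(1+o(1))$ from Chebyshev/Paley--Zygmund plus a deterministic floor on $Z_i$. This cannot work at the required precision: with $\bbE[Y]=1$ and $\bbE[Y^2]=e^{V}$, $V=Kp_n^2|C_i|$, two moments alone do not force $\bbE[\log Y]\ge -\tfrac V2(1+o(1))$ (the left tail of $Y$ is unconstrained). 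Quantitatively, Chebyshev localizes $Y$ in $[1-t,1+t]$ only up to probability $V/t^2$, and charging the complement with the deterministic floor $e^{-D}$ yields an error of order $(DV)^{1/3}$ per cell; with $D\asymp p_n\ell^2$ and $V\asymp p_n^2\ell^2$ this is $p_n\ell^{-2/3}$ per site, which is $\gg h^2$ throughout your window $h^{-1/2}\ll\ell\ll h^{-1}$, hence swamps the very term $\tfrac12 Kp_n^2\asymp h^2$ you are trying to capture. (Moreover your floor is misjustified: restricting to $\phi\equiv n$ on $C_i$ costs $e^{-c_\beta|C_i|}$ with $c_\beta\asymp e^{-4\beta}$ independent of $n$, not $e^{-Cp_n|C_i|}$.)

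The paper's proof goes the opposite way: it takes cells of side $M=h^{-1/50}$, so small that $M^2p_n\to 0$, and additionally restricts to configurations with at most $\kappa=100$ contacts per cell (events $\cD_z$, at cost $O(h^3)$). Then the normalized, $h$-tilted cell partition function $Y=\tilde\mu_{z,h}(e^{\sum(\alpha\go_x-\gl(\alpha))\delta_x})$ satisfies $Y\ge \tilde\mu_{z,h}(\sum\delta_x=0)\ge 1-\sqrt h$ \emph{almost surely in} $\go$, which permits the pointwise inequality $\log x\ge (x-1)-\frac{(x-1)^2}{2(1-\sqrt h)^2}$; taking $\bbE$ kills the first-order term and turns the second into exactly $-\tfrac12\bbE[(Y-1)^2]=-\tfrac12(\mu^{\otimes2}[e^{K\sum\delta^{(1)}\delta^{(2)}}]-1)\approx-\tfrac12Kp_n^2M^2$, with the $\kappa$-restriction controlling the replica overlap. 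This is the mechanism that produces the coefficient $\tfrac12$; the law of large numbers over the many small cells then replaces the per-cell concentration you were seeking. To repair your argument you would need either to shrink the cells and adopt this pointwise bound, or to supply higher-moment (third/fourth moment or log-normal approximation) control of $Y$ — the second moment alone is not enough.
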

\noindent Note that as we have $G_{\beta}(\alpha,h)\stackrel{h\to 0+}{\asymp} h^2$, \eqref{zook} implies immediately that   
$$\liminf_{h\to 0+} \frac{\bar \tf_{\beta}(\alpha,h)}{G_{\beta}(\alpha,h)}\ge 1.$$
\noindent Let us also observe that it is enough to prove the statement for $h\le h_0(\alpha,\beta)$ where $h_0$ can be chosen arbitrarily small. Indeed by tuning the value of $C(\alpha,\beta)$ we can make the r.h.s.\ of \eqref{zook} negative for all $h\ge h_0$. We can make the constant $\gep$ in Proposition \ref{dalower} explicit (the reader can check reading through the proof that $\gep=1/100$ works), but we choose to work with a letter for the sake of readability.

\medskip

\subsubsection*{Decomposition of the proof of Proposition \ref{dalower}}
In this whole section we fix $n=n_G=n_{G}(h)$ to be the maximizer in \eqref{optimize} ({\blue
there is a geometric sequence of values for $h$ for which \eqref{optimize}  admits two maximizers, in the case we select the smallest one}). We have 
\begin{equation}\label{lexpression}
n_G(h)= \max\left(0, \left\lceil \frac{\log \left( \theta_1 (e^{\gl(2\alpha)-2\gl(\alpha)}-1)/(2h)\right)}{ 4\beta} \right\rceil \right)
\end{equation}
In particular $n_G(h)$ is asymptotically equivalent to $|\log h|/4\beta$ 
and as we are interested in the small values of  $h$, we can think of $n$ as large.
Recalling the definition of the reduced partition function \eqref{reduced}, we are going to show that for $N\ge N_0(h)$ sufficiently large  we have
  \begin{equation}\label{zook2}
  \frac{1}{N^2}\bbE \log Z^{n,h,\alpha,\go}_{\gb,N} \ge G_{\beta}(\alpha,\beta)- C(\alpha,\gb) h^{2+\gep}.
 \end{equation}
Taking the limit of \eqref{zook2} when $N$ tends to infinity we obtain \eqref{zook}.
We split the proof of the result in two main steps and three lemmas.
Our first step get rid of the possibility of having large contours. We show that $ Z^{n,h,\alpha,\go}_{\gb,N}$ can be replaced by
 $$\tilde Z^{n,h,\alpha,\go}_{\gb,N}:= \tilde \bE^n_{N,\gb}\left[ e^{\sum_{x\in \gL_N} (\alpha\go_x-\gl(\alpha)+h )\delta_x} \right],$$ 
where $$\tilde \bP^n_{N,\gb}:=\bP^n_{N,\gb}[ \cdot  \ | \ \cA_{N,n}],$$
is the SOS measure  with boundary condition 
$n$, conditioned  not to display contours of length larger than $n^4$.
$$\cA_{N,n}:= \left\{  \forall \gamma \in \Upsilon_n(\phi),  |\tilde \gamma|\le n^4  \right\}.$$
The choice of $n^4$ for the threshold defining large contours is quite arbitrary (the only requirements are subexponential growth in $n$ and being of a larger order of magnitude than $n^2$) and  but turns out to be convenient in the computation.
The following lemma implies that 
$ \tilde Z^{n,h,\alpha,\go}_{\gb,N}$ is a good approximation for $Z^{n,h,\alpha,\go}_{\gb,N}$.
\begin{lemma}\label{zerro}
We have for all $\gb\ge 2\log 3$, and all integers $N$ and $n$ 
\begin{equation}
 \bP^n_{N,\gb}\left[\cA_{N,n} \right]\ge \exp\left(- 4 e^{-\gb n^4/2} N^2 \right).
\end{equation}
In particular we have 
\begin{equation}\label{chankliche}
 \log Z^{n,h,\alpha,\go}_{\gb,N}\ge 
 \log \tilde Z^{n,h,\alpha,\go}_{\gb,N} -4 N^2 e^{-\gb n^4/2}.
\end{equation}
\end{lemma}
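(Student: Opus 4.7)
The strategy is to read the event $\cA_{N,n}$ through the contour representation of Section~\ref{contour}. Since ``every contour has length at most $n^4$'' is manifestly a decreasing event in $\Upsilon_n(\phi)$ for the inclusion order on $\cP(\cC_{\gL_N})$---adding contours to a collection can only risk producing long ones---Lemma~\ref{restrict}(A) gives directly
\begin{equation*}
\bP^n_{N,\gb}[\cA_{N,n}] \;\ge\; \bQ_{\gL_N,\gb}[\cA_{N,n}],
\end{equation*}
either by invoking the stochastic domination stated there, or equivalently by applying Harris's inequality to the conditioning of the product measure $\bQ$ on the decreasing event ``$\chi$ is compatible''.

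Since $\bQ_{\gL_N,\gb}$ is a product measure on $\cP(\cC_{\gL_N})$, the right-hand side factorizes as $\prod_{\gamma \in \cC_{\gL_N}:\, |\tilde\gamma|>n^4}\bigl(1-e^{-\gb|\tilde\gamma|}\bigr)$. The hypothesis $\gb \ge 2\log 3$ makes every factor comfortably larger than $1/2$ (contours have length at least four), so the inequality $\log(1-x)\ge -2x$ yields
\begin{equation*}
\log \bP^n_{N,\gb}[\cA_{N,n}] \;\ge\; -2 \sumtwo{\gamma \in \cC_{\gL_N}}{|\tilde\gamma|>n^4} e^{-\gb|\tilde\gamma|}.
\end{equation*}
The remaining sum is controlled by a standard Peierls-type counting: the number of contours $\gamma$ of length $\ell$ in $\gL_N$ is at most $C N^2\cdot 3^\ell$ (two signs, at most $N^2$ positions for, say, the lowest-leftmost dual vertex of $\tilde\gamma$, then three choices at each of the $\ell$ subsequent dual vertices along the closed path). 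The threshold $\gb \ge 2\log 3$ is precisely what makes $3 e^{-\gb}\le e^{-\gb/2}$, so the sum telescopes into a geometric series bounded by a constant multiple of $N^2 e^{-\gb n^4/2}$, which reproduces the factor $4$ appearing in the statement with some room to spare.

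The second display is then essentially free. Restricting the expectation defining $Z^{n,h,\alpha,\go}_{\gb,N}$ to the event $\cA_{N,n}$ provides
\begin{equation*}
Z^{n,h,\alpha,\go}_{\gb,N} \;\ge\; \bE^n_{N,\gb}\!\left[e^{\sum_{x\in\gL_N}(\alpha\go_x-\gl(\alpha)+h)\delta_x}\,\ind_{\cA_{N,n}}\right] \;=\; \tilde Z^{n,h,\alpha,\go}_{\gb,N}\cdot \bP^n_{N,\gb}[\cA_{N,n}],
\end{equation*}
and taking logarithms and substituting the first display yields \eqref{chankliche}. No step is really an obstacle once the contour framework is in place; the only point deserving attention is the direction of the stochastic domination---it is the \emph{decreasing} character of $\cA_{N,n}$ that allows replacing $\bP^n_{N,\gb}$ by the product measure $\bQ$ with the inequality pointing in the favorable direction, and then the Peierls counting only has to survive the mild threshold $\gb\ge 2\log 3$.
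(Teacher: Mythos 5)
Your proposal is correct and follows essentially the same route as the paper: the decreasing character of $\cA_{N,n}$ plus the stochastic domination of Lemma \ref{restrict} reduces the bound to the product measure $\bQ_{\gL_N,\gb}$, the inequality $1-x\ge e^{-2x}$ converts the product into a sum, and the Peierls count $N^2 3^{\ell}$ of contours of length $\ell$ together with $\gb\ge 2\log 3$ yields the bound $4N^2 e^{-\gb n^4/2}$; the second display then follows by restricting the expectation to $\cA_{N,n}$. No discrepancies worth noting.
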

\noindent Note that with our choice for $n=n_G(h)$, we have $4 e^{-\gb n_G^4/2}\le h^3$ for $h$ sufficiently small and thus it is sufficient to prove  \eqref{zook2} with $ Z^{n,h,\alpha,\go}_{\gb,N}$ replaced by
$ \tilde Z^{n,h,\alpha,\go}_{\gb,N}$.
The second step is more delicate and relies on a coarse graining argument. We wish to divide our system into cells in order to factorize the partition function.
In order to obtain a factorization we must condition on the realization of the set of contours which intersect several cells.

\medskip

We set $M= h^{-\frac{1}{50}}$, $N=kM$ where $k\in \bbN$ is arbitrary. Imposing that $N$ is a multiple of $N$ is by no mean restrictive. Indeed, since the limit of the l.h.s.\ exists, it is in fact sufficient to prove \eqref{zook2} along a subsequence.
We  divide $\gL_N= \lint 1,N\rint$ in $k^2$ boxes.
We let 
$$\bbH=\bbH_M:=\{ (x_1,x_2)\in (\bbZ^2)^* \ : \  M |(x_1-1/2) \text{ or } M |(x_2-1/2) \} $$
denote the grid in the dual lattice which splits $\gL_N$ into cells of side-length $M$. We index these cells by  $z\in \lint 0,k-1\rint^2$,
and set $\cC_z:= \gL_M +Mz$. 
Recalling the definition of an external contour \eqref{defexternal}, we are going to consider the set of countours intersecting $\bbH$ (here we consider the vertex intersection)
$$U(\phi):=\{ \gamma\in \Upsilon_n^{\ext}(\phi) \ : \ \gamma \cap \bbH \ne \emptyset \}.$$
Note that the contours of $U(\phi)$ naturally provides a partition of the box $\gL_N$. If $U(\phi)=\gG$, setting 
$\bar \gG:= \bigcup_{\gamma \in \gG} \bar \gamma$,
we let, for $z\in \lint 0,k-1\rint^2$,  $\cB_z$ denote the set of sites in $\cC_z$ which are not enclosed in a contour of $\gG$,
$$\cB_z:= \cC_z\setminus \bar \gG.$$
Note that $\cB_z$ is not necessarily connected.
We can observe that 
\begin{equation}\label{patricion}
\gL_N=\left( \bigcup_{z\in \lint 1,k\rint} \cB_z\right) \cup \left( \bigcup_{\gamma \in \gG } \bar \gamma \right),
\end{equation}
and that the union is disjoint. 
Moreover using the contour decomposition, one can check  
that conditioned on $U(\phi)=\gG$, the restrictions of $\phi$  to the regions of the partition in the l.h.s.\ of \eqref{patricion} form an independent family.
Hence we have 
\begin{multline}
\tilde \bE^n_{N,\gb}\left[e^{\sum_{x\in \gL_N} (\alpha \go_x- \gl(\alpha)+h)\delta_x}  |  U(\phi)=\gG \right]\\
 = \prod_{z\in \lint 0,k-1\rint^2 } \tilde \bE^n_{N,\gb}\left[e^{\sum_{x\in \cB_z} (\alpha \go_x- \gl(\alpha)+h)\delta_x}  \ | \ U(\phi)=\gG \right] \\ 
 \times \prod_{\gamma \in \gG } \tilde \bE^n_{N,\gb}\left[e^{\sum_{x\in \bar \gamma} (\alpha \go_x- \gl(\alpha)+h)\delta_x} \ | \ U(\phi)=\gG \right],
\end{multline}
and as a consequence
\begin{multline}\label{troux}
  \bbE \log \tilde Z^{n,h,\alpha,\go}_{\gb,N}
  \ge \min_{\gG} \Bigg[ \sum_{z\in \lint 0,k-1\rint^2 } \bbE  \log \tilde \bE^n_{N,\gb}\left[e^{\sum_{x\in \cB_z} (\alpha \go_x- \gl(\alpha)+h)\delta_z}  \ | \ U(\phi)=\gG \right]\\
  + \sum_{\gamma \in \gG } \bbE \log\tilde \bE^n_{N,\gb}\left[e^{\sum_{x\in \bar \gamma} (\alpha \go_x- \gl(\alpha)+h)\delta_z} \ | \ U(\phi)=\gG\right] \Bigg],
\end{multline}
where the minimum is taken over all possible realization of $U(\phi)$.
To conclude we  obtain a lower bound on each term of the two sums in the l.h.s.\ above, yielding an estimate which is uniform in the realization of $U(\phi)$.
As the cells $\cB_z$ cover much more area than the interior of contours in $U(\phi)$, we need to show that sites inside $\cB_z$ gives a  contribution per site very close to $G_{\beta}(\alpha,h)$, while for the restriction to $\bar \gamma$, showing that the contribution of each site is of order $h^2$ in absolute value is sufficient. This the content of the two following lemmas.

\begin{lemma}\label{firstlemma}
 There exists a constant $C(\alpha,\beta)$ such that for any $h\in (0, 1]$, for any $\gamma \in \cC_{\gL_N}$ with  $|\tilde \gamma|\le  (n_G(h))^4$ we have

 \begin{equation}
\bbE \log \tilde \bE^n_{N,\gb}\left[e^{\sum_{x\in \bar \gamma} (\alpha \go_x- \gl(\alpha)+h)\delta_z} \ | \ \gamma \in \Upsilon_n^{\ext}(\phi) \right]\ge -C|\bar \gamma| h^2.
 \end{equation}

\end{lemma}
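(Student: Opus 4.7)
The strategy is a second-moment argument comparing the partition function with its annealed counterpart. Set
$$Z_\gamma := \tilde\bE^n_{N,\gb}\!\left[e^{\sum_{x\in\bar\gamma}(\alpha\go_x - \gl(\alpha) + h)\delta_x} \bigm| \gamma\in\Upsilon^{\ext}_n(\phi)\right], \qquad Y := Z_\gamma/\bbE Z_\gamma,$$
so that $\bbE Y = 1$. The identity $\log\bbE Z_\gamma - \bbE\log Z_\gamma = \bbE[Y\log Y]$, combined with the pointwise inequality $y\log y \le y(y-1)$ (immediate from $\log y \le y-1$), yields $\bbE\log Z_\gamma \ge \log\bbE Z_\gamma - \Var Y$. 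Using $\bbE[e^{(\alpha\go_x - \gl(\alpha))\delta_x}] = 1$ (which holds because $\delta_x\in\{0,1\}$ and $\bbE[e^{\alpha\go_x - \gl(\alpha)}]=1$), the annealed partition function reduces to $\bbE Z_\gamma = \tilde\bE^n_{N,\gb}[e^{h S_\gamma} \mid \gamma\in\Upsilon^{\ext}_n(\phi)] \ge 1$ with $S_\gamma := \sum_{x\in\bar\gamma}\delta_x$, so it suffices to show $\Var Y \le C(\alpha,\gb) h^2 |\bar\gamma|$.

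A two-replica computation then gives
$$\frac{\bbE[Z_\gamma^2]}{(\bbE Z_\gamma)^2} - 1 = \sum_{\emptyset \ne A \subset \bar\gamma} v^{|A|}\Bigl(\bE_\mu\Bigl[\prod_{x\in A}\delta_x\Bigr]\Bigr)^2,$$
with $v := e^{\gl(2\alpha) - 2\gl(\alpha)} - 1$ and $\mu$ the pinning-tilted version of $\tilde\bP^n_{N,\gb}[\,\cdot\mid\gamma\in\Upsilon^{\ext}_n(\phi)]$ obtained by biasing by $e^{h S_\gamma}$. Since $h|\bar\gamma|\le h|\tilde\gamma|^2\le h (n_G(h))^8 = o(1)$ in our regime, the Radon--Nikodym density $d\mu/d\tilde\bP^n_{N,\gb}[\,\cdot\mid\gamma\in\Upsilon^{\ext}_n(\phi)]$ is bounded away from $0$ and $\infty$, and the spike estimates of Proposition \ref{rouxrou} transfer to $\mu$ up to harmless constants.

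I then bound each term of the sum. Since $\{\delta_x = 1\} = \{\phi(x) = 0\}$ is, by the $\phi\mapsto -\phi$ symmetry of SOS, equivalent to a spike event of intensity $n$ for an appropriately reflected field, Proposition \ref{rouxrou} applies after this translation. Part~(i), together with $e^{-4\gb n_G(h)} \asymp h$, gives $\bE_\mu[\delta_x] \le Ch$, whence the singletons contribute at most $Cv h^2 |\bar\gamma|$, of the desired order. For $|A|\ge 2$, parts~(ii)--(iv) of Proposition \ref{rouxrou}, combined with the exponential decay of correlations from Theorem \ref{infinitevol} for well-separated sites, yield bounds on $\bE_\mu[\prod_{x\in A}\delta_x]$ that, after multiplication by $v^{|A|}$ and summation against $\binom{|\bar\gamma|}{|A|}$, produce a total contribution of strictly lower order than $h^2|\bar\gamma|$ thanks to $h|\bar\gamma| = o(1)$. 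The conditioning on $\gamma\in\Upsilon^{\ext}_n(\phi)$ and the restriction $\cA_{N,n}$ are absorbed using the observation from Section \ref{contour} that, conditionally on $\gamma\in\Upsilon^{\ext}_n(\phi)$, the field inside $\bar\gamma$ has the law of a SOS measure with boundary condition $n + \gep(\gamma)k(\gamma)$ restricted to contours of length $\le n^4$, where the intensity $k(\gamma)$ is geometric with parameter $e^{-\gb|\tilde\gamma|}$ by Lemma \ref{restrict}(B) and is handled by averaging. The principal obstacle is making the bounds on $\bE_\mu[\prod_{x\in A}\delta_x]$ sharp enough to cover both clustered and well-separated subsets $A$ uniformly, while keeping all constants independent of the geometry of $\gamma$ in the bulk regime $|\tilde\gamma|\le n^4$.
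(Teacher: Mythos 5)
Your overall strategy --- annealing, then controlling the gap between $\bbE \log Z_\gamma$ and $\log \bbE Z_\gamma$ by a two-replica second-moment computation --- is the same as the paper's, and your replica identity $\bbE[Z_\gamma^2]/(\bbE Z_\gamma)^2 - 1 = \sum_{A\ne\emptyset} v^{|A|}\bigl(\bE_\mu[\prod_{x\in A}\delta_x]\bigr)^2$ is correct. There are, however, two genuine gaps.

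First, the claimed identity $\log\bbE Z_\gamma - \bbE\log Z_\gamma = \bbE[Y\log Y]$ is false: the left-hand side equals $-\bbE[\log Y]$ (the reverse relative entropy), not $\bbE[Y\log Y]$, and these differ in general. Consequently the inequality $\bbE\log Z_\gamma \ge \log\bbE Z_\gamma - \Var Y$ does not follow from $y\log y \le y(y-1)$. To obtain a bound of this type one needs a pointwise lower bound on $Y$ (so that $\log y \ge (y-1) - C(y-1)^2$ applies), which in turn requires showing that $Z_\gamma$ is deterministically bounded away from $0$; the paper establishes exactly this by proving $\mu_{\gamma,h}\bigl(\sum_{x\in\bar\gamma}\delta_x = 0\bigr)\ge 1/2$ via Markov's inequality and the one-point estimate. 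This step is repairable but missing from your argument.

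Second, and more seriously, your expansion runs over all subsets $A\subset\bar\gamma$, and the terms with $|A|=k$ large are not controlled by the available estimates. Proposition \ref{rouxrou}(iv) gives $\bE_\mu[\prod_{x\in A}\delta_x]\le C_k e^{-3\gb\sqrt{k}\,n}$; after squaring and multiplying by $v^k\binom{|\bar\gamma|}{k}$ with $|\bar\gamma|$ as large as $n^{16}\asymp |\log h|^{16}$, the combinatorial factor $\binom{|\bar\gamma|}{k}v^k$ overwhelms $e^{-6\gb\sqrt{k}\,n}$ once $k$ exceeds roughly $(|\log h|/\log|\log h|)^2$, while $k$ ranges up to $|\bar\gamma|$. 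You flag this as ``the principal obstacle'' but do not resolve it. The paper sidesteps it entirely by first restricting to the low-contact event $\cD_\gamma=\{\sum_{x\in\bar\gamma}\delta_x\le \kappa\}$ with $\kappa=100$ (whose complement has probability at most $h^3$ by \eqref{lotzof}, and which can only decrease the partition function), after which $\sum_x\delta^{(1)}_x\delta^{(2)}_x\le\kappa$ holds deterministically under $\mu_{\gamma,h}^{\otimes 2}$ and the replica exponential moment is bounded by its first-order term times the constant $e^{\kappa(\gl(2\alpha)-2\gl(\alpha))}$. Without this truncation, or an equivalent device, your argument does not close.
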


\begin{lemma}\label{secondlemma}
There exists a constant $h_0(\alpha,\beta)>0$ such that if $h\in (0, h_0(\alpha,\beta)]$ then
we have, for any possible realization of $U(\phi)$ and any $z\in \lint 0,k-1\rint$

 \begin{equation}
 \bbE \log \tilde \bE^n_{N,\gb}\left[e^{\sum_{x\in \cB_z} (\alpha \go_x- \gl(\alpha)+h)\delta_x}  \ | \ U(\phi)=\gG \right]\ge   M^2\left[  G_{\gb}(\alpha,h)-h^{2+\gep}\right].
 \end{equation}

\end{lemma}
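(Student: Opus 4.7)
The plan is to reduce the conditional expectation over $\phi$ inside $\cB_z$ to a tractable SOS-type measure with constant boundary condition $n$ and a short-contour restriction, and then perform a second moment computation on this reference measure that recovers the Taylor expansion underlying the definition of $G_\beta(\alpha,h)$. The role of the short-contour conditioning $\cA_{N,n}$ is to make the interface effectively massive, so that the contact indicators $(\delta_x)_{x\in\cB_z}$ behave, on a ``regular'' interior subset of $\cB_z$, as approximately IID Bernoulli variables of parameter $p_n:=\theta_1 e^{-4\gb n}$.

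Let $\cB^\circ_z$ be the set of $x\in\cB_z$ at $\ell_1$-distance $\ge n^4$ from $\partial\cB_z$. Since every contour of $\gG$ has length $\le n^4$ and must touch $\partial\cC_z\subset\bbH$, a direct counting gives $|\cB_z\setminus\cB^\circ_z|\le CMn^8$, which for $M=h^{-1/50}$ and $n_G(h)\sim|\log h|/(4\gb)$ is negligible at scale $M^2h^\gep$; the sum in the exponent over sites in $\cB_z\setminus\cB^\circ_z$ thus contributes $o(M^2h^{2+\gep})$ to the free energy. Conditionally on $U(\phi)=\gG$, the field on $\cB_z$ is distributed, up to negligible boundary modifications near contours of $\gG$, as $\bP^n_{\bL,\cB_z,\gb}$ where $\bL$ denotes the class of contours of length $\le n^4$ disjoint from $\bbH$, and Proposition \ref{rouxrou}(iii) applied to this measure yields, for $x\in\cB^\circ_z$,
\begin{equation*}
\bigl|\bP^n_{\bL,\cB_z,\gb}[\delta_x=1]-p_n\bigr|\le C(e^{-6\gb n}+e^{-n^4})=o(p_n).
\end{equation*}

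Set $Z:=\bE^n_{\bL,\cB_z,\gb}\bigl[\exp\bigl(\sum_{x\in\cB^\circ_z}(\alpha\go_x-\gl(\alpha)+h)\delta_x\bigr)\bigr]$. Site-wise integration of the IID disorder gives
\begin{equation*}
\bbE_\go[Z]=\bE^n_{\bL,\cB_z,\gb}\bigl[e^{h\sum_x\delta_x}\bigr],\quad \bbE_\go[Z^2]=\bE^{n,\otimes 2}_{\bL,\cB_z,\gb}\bigl[e^{h\sum_x(\delta^{(1)}_x+\delta^{(2)}_x)+(\gl(2\alpha)-2\gl(\alpha))\sum_x\delta^{(1)}_x\delta^{(2)}_x}\bigr].
\end{equation*}
By Jensen and the peak probability bound, $\log\bbE_\go[Z]\ge|\cB^\circ_z|\,p_nh\,(1+o(1))$. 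For the second moment, one uses the independence of the two copies (so that $\bE[\delta^{(1)}_x\delta^{(2)}_x]=(\bE[\delta_x])^2\le p_n^2(1+o(1))$ at every $x$) combined with the multi-point contact bounds of Proposition \ref{rouxrou}(ii),(iv) at short distances and the exponential decay of correlations of Theorem \ref{infinitevol} at large distances (which survives the short-contour restriction by the contour representation itself), to carry out a Poisson-type expansion of $\bE^{n,\otimes 2}[e^{(\gl(2\alpha)-2\gl(\alpha))\sum\delta^{(1)}\delta^{(2)}}]$ yielding
\begin{equation*}
\log\bbE_\go[Z^2]-2\log\bbE_\go[Z]\le|\cB^\circ_z|\,p_n^2\,\tfrac{\Var(e^{\alpha\go_{\bf 0}})}{\bbE[e^{\alpha\go_{\bf 0}}]^2}(1+o(1))+o(M^2h^{2+\gep}).
\end{equation*}

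To convert these moment bounds into a lower bound for $\bbE_\go\log Z$, one uses that $\log Z$ is a convex Lipschitz function of $(\go_x)_{x\in\cB^\circ_z}$ with individual Lipschitz constants bounded by $\alpha\bE^{\mathrm{tilted}}[\delta_x]=O(p_n)$; classical concentration inequalities for sums of IID variables with exponential moments (as assumed in \eqref{allorder}) then give $\Var(\log Z)\le C|\cB^\circ_z|p_n^2$ and control higher cumulants of $\log Z$ at the level $o(M^2h^{2+\gep})$, justifying the log-normal-type identity
\begin{equation*}
\bbE_\go\log Z=\log\bbE_\go[Z]-\tfrac{1}{2}\bigl(\log\bbE_\go[Z^2]-2\log\bbE_\go[Z]\bigr)+o(M^2h^{2+\gep}).
\end{equation*}
Combining this with the two moment estimates above and using the maximality of $n=n_G(h)$ in \eqref{optimize} gives $\bbE_\go\log Z\ge M^2G_\gb(\alpha,h)-O(M^2h^{2+\gep})$, which by adjusting $\gep$ proves the lemma. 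The hardest step is this last one: the log-normal-type identity is not automatic and must be justified via a careful cumulant expansion relying on the tail behavior of $\go$. A secondary difficulty is verifying that the estimates of Proposition \ref{rouxrou} indeed extend to the restricted measure $\bP^n_{\bL,\cB_z,\gb}$, whose boundary condition deviates from the value $n$ on the parts of $\partial\cB_z$ inherited from contours of $\gG$; this is precisely what forces the restriction to $\cB^\circ_z$, where such effects are exponentially suppressed.
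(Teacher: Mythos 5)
Your overall architecture (restrict to a bulk subset of $\cB_z$, use Proposition \ref{rouxrou} to treat the contacts as near-Bernoulli$(p_n)$, and extract $G_\beta(\alpha,h)$ from a first/second moment computation in $\go$) matches the paper's, but the step that actually produces the lower bound on $\bbE\log Z$ is missing. Your ``log-normal-type identity'' $\bbE\log Z=\log\bbE Z-\tfrac12(\log\bbE Z^2-2\log\bbE Z)+o(\cdot)$ is the whole content of the lemma, and the justification you offer does not deliver it: a bound $\Var(\log Z)\le C|\cB_z^\circ|p_n^2$ controls fluctuations of $\log Z$ around its mean but says nothing about the sign or size of $\bbE\log Z-\log\bbE Z$, and ``controlling higher cumulants'' of $\log Z$ is not something your concentration input gives you. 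The paper's mechanism is entirely deterministic at this point: it factors out the homogeneous $h$-tilt, so that the remaining random factor $X=\mu_{z,h}\bigl(e^{\sum_x(\alpha\go_x-\gl(\alpha))\delta_x}\bigr)$ satisfies $\bbE[X]=1$ exactly (by Fubini and $\bbE[e^{\alpha\go-\gl(\alpha)}]=1$, which kills the linear term) and $X\ge\mu_{z,h}(\sum_x\delta_x=0)\ge1-\sqrt h$ almost surely (because the exponential is $\ge\ind_{\{\sum\delta_x=0\}}$). One then applies the pointwise inequality $\log x\ge(x-1)-c(x-1)^2$ on $[1-\sqrt h,\infty)$ and is left with bounding $\bbE[(X-1)^2]$, which is the replica quantity $\mu_{z,h}^{\otimes2}\bigl[e^{(\gl(2\alpha)-2\gl(\alpha))\sum_x\delta_x^{(1)}\delta_x^{(2)}}\bigr]-1$. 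If you want to keep your formulation with $Z$ itself, you would need the analogous almost-sure lower bound on $Z$ normalized by $\bbE_\go Z$ and then the same pointwise inequality; as written, the crux is asserted rather than proved.

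A second, related gap: you have no truncation on the number of contacts. The paper conditions on the event $\cD_z=\{\sum_{x\in\cB_z}\delta_x\le\kappa\}$ (with $\kappa=100$), which costs only $O(h^3)$ in probability but is essential to linearize the replica exponential, since then $\sum_x\delta_x^{(1)}\delta_x^{(2)}\le\kappa$ surely and $e^{cu}-1\le ue^{c\kappa}$ applies. Without it, your ``Poisson-type expansion'' of $\bE^{\otimes2}[e^{c\sum\delta^{(1)}\delta^{(2)}}]$ requires an exponential moment of the overlap, and the available multi-point estimate \eqref{lotzof} only decays like $e^{-3\gb\sqrt k}$ in the number $k$ of simultaneous contacts, which is not enough to beat $e^{ck}$. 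Finally, note that the paper does not need long-range decorrelation of $\delta_x\delta_y$ under the restricted measure (which you invoke via Theorem \ref{infinitevol} without proof for that measure): the diagonal overlap factorizes over independent replicas, $\tilde\mu^{\otimes2}(\delta_x^{(1)}\delta_x^{(2)})=\tilde\mu(\delta_x)^2$, and the event of two or more overlaps is handled by the crude uniform bound $\tilde\mu(\delta_x\delta_y)\le Ce^{-6\gb n}$ from \eqref{zup2s}.
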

\noindent Before proving the above results, let us show how they permit to conclude our proof of Proposition \ref{dalower}.

\begin{proof}[Proof of Proposition \ref{dalower}]
In view of \eqref{zook2},\eqref{chankliche} and  \eqref{troux}, we only need to show that for every choice of $\gG$ we have 
\begin{multline}\label{tooproove}
  \sum_{z\in \lint 0,k-1\rint^2 } \bbE  \log \tilde \bE^n_{N,\gb}\left[e^{\sum_{x\in \cB_z} (\alpha \go_x- \gl(\alpha)+h)\delta_z}  \ | \ U(\phi)=\gG \right]\\
  + \sum_{\gamma \in \gG } \bbE \log\tilde \bE^n_{N,\gb}\left[e^{\sum_{x\in \bar \gamma} (\alpha \go_x- \gl(\alpha)+h)\delta_z} \ | \ U(\phi)=\gG\right]  \\ \ge N^2  G_{\gb}(\alpha,h)- Ch^{2+\gep}.
\end{multline}
It is a direct consequence of the contours construction that
\begin{equation}
\tilde \bE^n_{N,\gb}\left[e^{\sum\limits_{x\in \bar \gamma} (\alpha \go_x- \gl(\alpha)+h)\delta_x} \ | \ U(\phi)=\gG \right]=
\tilde \bE^n_{N,\gb}\left[e^{\sum\limits_{x\in \bar \gamma} (\alpha \go_x- \gl(\alpha)+h)\delta_x} \ | \ \gamma \in \Upsilon_n^{\ext}(\phi) \right].
\end{equation}
Hence taking the expectation with respect to $\go$ in \eqref{troux} and using the lemmas to evaluate each term of the sum we obtain that the r.h.s.\ of \eqref{tooproove} is larger than

\begin{equation}
 N^2\left[  G_{\gb}(\alpha,h)-h^{2+\gep}\right]-C h^2 \tilde \bE^n_{N,\gb}\sum_{\gamma \in \gG } |\bar \gamma| .
\end{equation}
Now the sites enclosed by some $\gamma\in U(\phi)$ are all  located at distance $n^4$ from $\bbH$.
Hence we have (provided that $\gep< 1/50$), for all $h$ sufficiently small 
 $$ \sum_{\gamma \in U(\phi) } |\bar \gamma| \le 4 n^4 k^2 M   = \frac{4 n^4 N^2}{M}\le N^2 h^{\gep}.$$
 This is sufficient to conclude.
 
 \end{proof}
\subsection{Proof of Lemma \ref{zerro}}
The event $\cA_{N,n}$ is decreasing for the inclusion.
Thus applying Lemma \ref{restrict} we have 
\begin{equation}
  \bP^n_{N,\gb}\left[\cA_{N,n} \right]
  \ge \bQ_{N,\beta}[ \ \forall \gamma \in \chi, \ |\tilde \gamma|  \le n^4 ].
\end{equation}
Now, if $\cC_N$ denotes the set of contours in the box $\gL_N$ the latter probability is exactly equal to 
\begin{equation}
 \prod_{\gamma \in \cC_{N}}(1-e^{-\beta|\tilde \gamma|})\ge \exp\left( -2 \sum_{\gamma \in \cC_N} e^{-\beta |\tilde \gamma|} \right)
\end{equation}
where we used $1-x\ge e^{-2x}$ which is valid for $x\in (0,1/2)$.
We conclude by observing that, as the number contour of length $m$ in $\lint 1,N\rint^2$ is bounded above by $N^2 3^m$ 
\begin{equation}
 \sum_{\gamma \in \cC_N} e^{-\beta |\tilde \gamma|} \le  N^2 \frac{e^{-(\beta-\log 3) n^4}}{1- e^{-(\beta-\log 3)}},
\end{equation}
which allows to conclude. 
 
 \qed
\subsection{Proof of Lemma \ref{firstlemma}}

For both Lemma \ref{firstlemma} and \ref{secondlemma} our strategy is to rely on second moment computation together with a Taylor expansion.
Let us first describe the distribution of $\phi$ restricted to
$\bar \gamma$ after the conditioning. We have (recalling \eqref{internextern}) from Lemma \ref{restrict}
\begin{equation}\label{lunoulaut}
 \tilde \bP^n_{N,\gb}\left[\phi\restrict _{\bar \gamma} \in \cdot | \ \gamma \in \Upsilon^{\ext}_n \right]= \begin{cases} \tilde \bP^{n-1}_{\bar \gamma,\gb}[ \cdot \ | \ \forall x\in \Delta_{\gamma}^-, \phi\le n-1] \quad \text{ if } \gep(\gamma)=-1, \\ 
 \tilde \bP^{n+1}_{\bar \gamma,\gb}[ \cdot \ | \ \forall x\in \Delta_{\gamma}^-, \phi\ge n+1] \quad \text{ if } \gep(\gamma)=+1.
\end{cases}
\end{equation}
where the tilde on the r.h.s.\ is present to remind ourselves that we are conditioning on having no contour of length more than $n^4$.
In order to have a better control on the second moment of the partition function, we restrict the computation to the set of surfaces $\phi$ which display only a small number of contacts.  We introduce the event 
\begin{equation}
 \cD^{\kappa}_{\gamma}:= \{ \phi \ : \ \sum_{x\in \bar \gamma} \delta_x \le \kappa\}. 
\end{equation}
{\blue We fix the value of $\kappa$ equal to $100$ (and drop the dependence in $\kappa$ in the notation) but keep the letter  $\kappa$ in the computation for better readability}.
As in both cases in the r.h.s. of  \eqref{lunoulaut},
the measure is of the type $\bP^{n\pm 1}_{\bL,\gL,\beta}$
(the conditioning corresponds to prohibiting positive (if $\gep(\gamma)=-1$) or negative (if $\gep(\gamma)=+1$) contours which enclose elements of $\gD^-_\gamma$).
Hence using Proposition \ref{rouxrou} and a union bound we have (recall that  $|\bar \gamma|\le n^{16}$ from the  restriction on the contours' length)
\begin{equation}\label{leptit}
  \tilde \bP^n_{N,\gb}\left[  \cD_{\gamma}^{\cc}\ | \ \gamma \in \Upsilon^{\ext} \right]\le \binom{|\bar \gamma|}{\kappa} e^{-30\gb (n-1)} \le h^3, 
\end{equation}
We define $\mu_{\gamma,h}$ the probability on $\gO_{N}$ defined by
\begin{equation}
 \mu_{\gamma,h}(A):= \frac{\tilde \bE^n_{N,\gb}\left[\ind_{\cD_{\gamma}\cap A}e^{h \sum_{x\in \bar \gamma} \delta_x} \ | \ \gamma \in \Upsilon^{\ext}  \right]}{\tilde \bE^n_{N,\gb}\left[ e^{h \sum_{x\in \bar \gamma} \delta_x}\ind_{\cD_{\gamma}} \ | \ \gamma \in \Upsilon^{\ext} \right]}.
\end{equation}
We have 
\begin{multline}
\log  \tilde \bE^n_{N,\gb}\left[e^{\sum_{x\in \bar \gamma} (\alpha \go_x- \gl(\alpha)+h)\delta_x}\ind_{\cD_{\gamma}} \ | \ \gamma \in \Upsilon^{\ext} \right]\\
=\log \tilde \bE^n_{N,\gb}\left[e^{\sum_{x\in \bar \gamma} h\delta_x}\ind_{\cD_{\gamma}} \ | \ \gamma \in \Upsilon^{\ext} \right]
 +\log \mu_{\gamma,h}\left(e^{\sum_{x\in \bar \gamma}\left(\alpha \go_x- \gl(\alpha)\right)\delta_x} \right).
\end{multline}
Using \eqref{leptit}, when $h$ is sufficiently small  the first term is larger than
$$\log  \tilde \bP^n_{N,\gb}\left[\cD_{\gamma} \ | \ \gamma \in \Upsilon^{\ext} \right]\ge -h^3.$$
Now we have to estimate the expectation of the second term.
Combining \eqref{lunoulaut} with Proposition \ref{rouxrou} we obtain that for all $x\in \bar \gamma$,
\begin{equation}
 \bE^n_{N,\gb}\left[\delta_x \ | \ \gamma \in \Upsilon^{\ext} \right]
 \le C e^{-4\gb(n-1)},
\end{equation}
for some positive constant $C$.
Using the expression for the density and the assumption that $|\bar \gamma|\le n^{16} \le C |\log h|^{16}$ we obtain for sufficiently small $h$, a similar estimate under    $\mu_{\gamma,h}$
\begin{equation}\label{unifbound}
 \mu_{\gamma,h}(\delta_x)\le \frac{C e^{-4\gb(n-1)+h |\bar\gamma|}}{\tilde \bP^n_{N,\gb}\left[  \cD_{\gamma}\ | \ \gamma \in \Upsilon^{\ext} \right]}\le C' e^{-4\gb(n-1)}.
\end{equation}
Then by Markov's inequality we have
\begin{equation}
\mu_{\gamma,h}\left(e^{\sum_{x\in \bar \gamma}(\alpha \go_x- \gl(\alpha))\delta_x} \right)\ge  \mu_{\gamma,h}\left( \sum_{x\in \bar \gamma} \delta_x=0 \right)\ge 1-C' |\bar \gamma| e^{-4\gb(n-1)} \ge \frac{1}{2}.
\end{equation}
We can use the inequality $\log x\ge (x-1)-(x-1)^2$ (valid for $x\ge 1/2$) and obtain
\begin{equation}
 \bbE\left[ \log  \mu_{\gamma,h}\left(e^{\sum_{x\in \bar \gamma}(\alpha \go_x- \gl(\alpha))\delta_x} \right)\right] \ge - \bbE \left[ \mu_{\gamma,h}\left(e^{\sum_{x\in \bar \gamma}\alpha \go_x- \gl(\alpha)}-1 \right)^2 \right].
\end{equation}
The average w.r.t.\ $\go$  can be computed explicitly, we obtain
\begin{equation}
 \bbE \left[ \mu_{\gamma,h}\left(e^{\sum_{x\in \bar \gamma}\alpha \go_x- \gl(\alpha)}-1\right)^2 \right]=
\mu_{\gamma,h}^{\otimes 2}\left[ e^{[\gl(2\alpha)-2\gl(\alpha)]\sum_{x\in \bar \gamma} \delta^{(1)}_x\delta^{(2)}_x} \right]-1.
\end{equation}
From the definition of $\mu_{ \gamma,h}$ we have 
$\sum_{x\in \bar \gamma} \delta^{(1)}_x\delta^{(2)}_x\le \kappa$,  
with probability $1$.
Hence using the fact that for any $u \in [0, \kappa(\gl(2\alpha)-2\gl(\alpha))]$ 
we have 
\begin{equation}
 e^{u}-1\le u e^{\kappa(\gl(2\alpha)-2\gl(\alpha))}.
\end{equation}
Using \eqref{unifbound} and the expression for $n=n_G(h)$ given in  \eqref{lexpression}, we can conclude the proof as follows
\begin{multline}
 \mu_{\gamma,h}^{\otimes 2}\left[ e^{[\gl(2\alpha)-2\gl(\alpha)]\sum_{x\in \bar \gamma} \delta^{(1)}_x\delta^{(2)}_x} \right]-1
 \le e^{\kappa(\gl(2\alpha)-2\gl(\alpha))}\mu_{\gamma,h}^{\otimes 2}\left(\sum_{x\in \bar \gamma} \delta^{(1)}_x\delta^{(2)}_x\right)\\
 \le 4 e^{\kappa(\gl(2\alpha)-2\gl(\alpha))} |\bar \gamma| e^{-8\gb (n-1)} \le C(\alpha, \kappa,\beta)|\bar \gamma| h^2.
\end{multline}

\qed 
\subsection{Proof of Lemma \ref{secondlemma}}

The proof follows the same steps as that of Lemma \ref{firstlemma}, except that we must aim for  sharper  estimates.
We define $\tilde \mu_{z,h}$  a probability of $\gO_N$ by

\begin{equation}
 \tilde \mu_{z,h}(A):= \frac{\tilde \bE^n_{N,\gb}\left[ \ind_{\{\phi  \  \restrict_{\cB_z}\in A \cap \cD_z \}}e^{h\sum_{x\in \cB_z} \delta_x}  \ | \ U(\phi)=\gG \right]}{\tilde \bE^n_{N,\gb}\left[ \ind_{\{\phi \ \restrict_{\cB_z}\in \cD_z\}}e^{h\sum_{x\in \cB_z} \delta_x}  \ | \ U(\phi)=\gG \right]}
\end{equation}
with ({\blue again $\kappa$ is set to be equal to $100$ but the letter is kept in the computation for better readability)}
\begin{equation}
 \cD_{z}:= \{ \phi \ : \ \sum_{x\in \cB_z} \delta_x \le \kappa\}.
\end{equation}
We are going to refine second moment argument of the second section. In particular we want to use the fact (cf.\ Proposition \ref{rouxrou}) that the probability of contact is 
close to $\theta_1 e^{-4\beta n}$ for most points inside $\cB_z$.
As these estimates are not valid close to the boundary of $\cB_z$, 
let us define $\cB'_z$ a subset of $\cB_z$ which includes only points in the bulk
$$ \cB'_z:= \lint n^4, M-n^4 \rint^2 +Mz  \subset \cB_z.$$
The point here is that from the definition, because contours in $\gG$ are all of diameter smaller than $n^4/2$ (length smaller than $n^4$) then all points in $\cB'_z$ are at a distance at least $n^4/2$ from the boundary of $\cB_z$.
The following estimates are easily deduced from Proposition \ref{rouxrou} (we include details at the end of the present subsection). 

\begin{lemma}\label{zetims}
 There exists  constants $C>0$ and $\beta_0>0$  such that for all $\beta>\beta_0$  we have
\begin{itemize}
 \item[(i)] For all $x\in \cB'_z$ we have
 \begin{equation}\label{toutt}
 |\tilde \mu_{z,h}(\delta_x)-\theta_1 e^{-4\gb n}|\le  C h^{3/2}.
\end{equation}
This estimate is also valid for $\tilde \mu_{z,0}$.
\item[(ii)] For all {\blue $x\in \cB_z$} we have 
\begin{equation}\label{touttt}
 \tilde \mu_{z,h}(\delta_x)\le C e^{-4\gb n}.
\end{equation}
This is also valid for $\tilde \mu_{z,0}$.
\item[(iii)] For all $x,y \in \cB_z$
\begin{equation}\label{mcdouble}
  \tilde \mu_{z,h}(\delta_x \delta_y)\le C e^{-6\gb n}.
\end{equation}

\end{itemize}
\end{lemma}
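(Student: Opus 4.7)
The plan is to re-express $\tilde\mu_{z,h}$ in terms of the underlying conditional SOS measure $\bP_\gG:=\tilde\bP^n_{N,\beta}[\,\cdot\,|\,U(\phi)=\gG]$, and then apply Proposition \ref{rouxrou}. The key structural observation, inherited from the contour decomposition (Lemma \ref{restrict}) via the independence of the restrictions of $\phi$ to the partition pieces in \eqref{patricion}, is that the law of $\phi\restrict_{\cB_z}$ under $\bP_\gG$ is an SOS measure of the form $\bP^n_{\bL,\cB_z,\beta}$, where $\bL$ is the set of contours contained in $\cB_z$ that are compatible with $\gG$ (in particular, do not cross $\bbH$) and have length at most $n^4$. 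Via the vertical translation $\phi\mapsto\phi-n$ and the $\phi\mapsto -\phi$ symmetry of the Hamiltonian, each estimate of Proposition \ref{rouxrou} (stated for boundary condition $0$) yields a corresponding estimate for $\bP_\gG$.

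Next I compare $\tilde\mu_{z,h}$ to $\bP_\gG$. Setting $\cS:=\sum_{y\in \cB_z}\delta_y$, on $\cD_z$ we have $\cS\le\kappa$ so the tilting factor satisfies $1\le e^{h\cS}\le e^{h\kappa}=1+O(h)$. Moreover $\bP_\gG[\cD_z]\ge 1/2$ for small $h$: Proposition \ref{rouxrou}(i) applied as above gives $\bE_\gG[\cS]\le |\cB_z|\cdot Ce^{-4\beta n}=O(M^2 h)=O(h^{24/25})$, whence by Markov $\bP_\gG[\cD_z^{\cc}]=o(1)$. Consequently, for any nonnegative functional $f$,
\begin{equation*}
\tilde\mu_{z,h}(f)\le \frac{e^{h\kappa}\bE_\gG[f]}{\bP_\gG[\cD_z]}\le 2e^{h\kappa}\bE_\gG[f].
\end{equation*}
Parts (ii) and (iii) then follow by choosing $f=\delta_x$ and $f=\delta_x\delta_y$ respectively and invoking Proposition \ref{rouxrou}(i)--(ii); both bounds transfer verbatim to $\tilde\mu_{z,0}$.

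The delicate part is (i). For $x\in \cB'_z$, by construction $x$ lies at distance at least $n^4$ from $\bbH$, and every contour in $\gG$ has perimeter $\le n^4$, so $d(x,\bL^{\cc})\ge n^4/2$. Proposition \ref{rouxrou}(iii) then yields the sharp asymptotics
\begin{equation*}
\bP_\gG[\phi(x)\ge n]=\theta_1 e^{-4\beta n}+O(e^{-6\beta n}+e^{-n^4/2})=\theta_1 e^{-4\beta n}+O(h^{3/2}),
\end{equation*}
and combining the estimate at heights $n$ and $n+1$ extracts a sharp formula for $\bP_\gG[\delta_x=1]$ (which equals $\bP^0_{\bL'}[\phi(x)=n]$ after translation and reflection) of the announced form $\theta_1 e^{-4\beta n}+O(h^{3/2})$, up to the convention implicit in \eqref{singledouble}. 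To transfer this identity through the density $\propto\ind_{\cD_z}e^{h\cS}$, one expands $e^{h\cS}=1+O(h\kappa)$ on $\cD_z$; the multiplicative corrections in numerator and denominator each contribute $1+O(h)$, producing a net error of order $O(h)\cdot\bP_\gG[\delta_x=1]=O(h^2)$, which is safely smaller than $h^{3/2}$.

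The main obstacle is controlling the contribution of $\cD_z^{\cc}$: the term $\bE_\gG[\delta_x\ind_{\cD_z^{\cc}}]$ must be shown to be $o(h^{3/2})$, yet a crude Markov bound gives only $\bP_\gG[\cD_z^{\cc}]=O(h^{24/25})$, which is insufficient. This is remedied by the higher-moment estimate
\begin{equation*}
\bE_\gG[\cS^{\kappa+1}]= \sum_{x_1,\ldots,x_{\kappa+1}\in \cB_z}\bP_\gG[\delta_{x_1}\cdots\delta_{x_{\kappa+1}}=1]\le C^{\kappa+1}(|\cB_z|e^{-4\beta n})^{\kappa+1}=O(h^{24(\kappa+1)/25}),
\end{equation*}
where the multi-point bound on $\bP_\gG[\delta_{x_1}\cdots\delta_{x_{\kappa+1}}=1]$ exploits the approximate independence of far-apart contacts inherited from the exponential decay of correlations (Theorem \ref{infinitevol}). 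Once $\kappa$ is a sufficiently large absolute constant, $\bP_\gG[\cD_z^{\cc}]$ becomes super-polynomially small in $h$, which explains and justifies the choice $\kappa=100$.
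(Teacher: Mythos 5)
Your overall architecture matches the paper's: identify the conditional law of $\phi\restrict_{\cB_z}$ as a measure of the form $\bP^n_{\bL,\gL,\gb}$, import the estimates of Proposition \ref{rouxrou}, and then control the effect of the tilt $e^{h\cS}$ (with $\cS=\sum_{y\in\cB_z}\delta_y$) and of the restriction to $\cD_z$. Parts (ii) and (iii) are fine as you argue them. The gap is in the step you yourself single out as the main obstacle. Your bound $\bbE_\gG[\cS^{\kappa+1}]\le C^{\kappa+1}(|\cB_z|e^{-4\gb n})^{\kappa+1}$ rests on the factorized multi-point estimate $\bP_\gG[\delta_{x_1}\cdots\delta_{x_k}=1]\le C^k e^{-4\gb nk}$, and this is false: the dangerous configurations are clusters of \emph{nearby} points. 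A single negative cylinder of intensity $n$ whose contour has length of order $\sqrt{k}$ puts an entire $\sqrt{k}\times\sqrt{k}$ block of $k$ sites at height $0$ simultaneously, at cost roughly $e^{-C\gb\sqrt{k}\,n}$, which is enormously larger than $e^{-4\gb kn}$. This is precisely why Proposition \ref{rouxrou}(iv) only asserts a bound of order $C_k e^{-3\gb\sqrt{k}\,n}$, and the exponential decay of correlations of Theorem \ref{infinitevol} cannot rescue the factorization, since the offending tuples are not far apart. Moreover, even with a correct multi-point bound, the moment $\bbE_\gG[\cS^{\kappa+1}]$ is dominated by repeated-index terms (the diagonal $x_1=\dots=x_{\kappa+1}$ alone contributes $|\cB_z|\,\bbE_\gG[\delta_x]\asymp h^{24/25}$), so Markov applied to $\cS^{\kappa+1}$ can never do better than the first-moment bound; increasing $\kappa$ buys nothing along this route.

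The correct argument, which is how the paper obtains \eqref{lesecondpetit}, is a union bound over \emph{distinct} $\kappa$-tuples combined with \eqref{lotzof}: $\bP_\gG[\cD_z^{\cc}]\le\binom{M^2}{\kappa}\,C_\kappa e^{-3\gb\sqrt{\kappa}\,n}\le h^{-4}\cdot h^{15/2}\le h^3/2$, and the choice $\kappa=100$ is exactly what makes the exponent $3\sqrt{\kappa}/4=15/2$ beat the combinatorial factor $\binom{M^2}{\kappa}\le h^{-4}$. Once this is in place your ``main obstacle'' dissolves, since $\bbE_\gG[\delta_x\ind_{\cD_z^{\cc}}]\le\bP_\gG[\cD_z^{\cc}]\le h^3\ll h^{3/2}$; the paper packages the whole comparison as the single inequality $|\tilde\mu_{z,h}(A)-\tilde\mu(A)|\le h^3+2M^2h\,\tilde\mu(A)$ valid for every event $A$. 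A further minor inaccuracy: $d(x,\bL^{\cc})$ is not bounded below by $n^4/2$ but only by $cn^2$, because the excluded contours include those of length greater than $n^4$, whose diameter is only guaranteed to be of order $n^2$; this is harmless since $e^{-cn^2}$ is still superpolynomially small in $h$.
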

Let us now prove the result using the estimates above.
Note that we have 
 \begin{multline}\label{thebegin}
 \log \tilde \bE^n_{N,\gb}\left[ \ind_{\cD_z}e^{\sum_{x\in \cB_z} (\alpha \go_x- \gl(\alpha)+h)\delta_x}  \ | \ U(\phi)=\gG \right]\\
 =
  \log \tilde \bE^n_{N,\gb}\left[\ind_{\cD_z}e^{\sum_{x\in \cB_z}h\delta_x}  \ | \ U(\phi)=\gG \right]+ \log \tilde \mu_{z,h}\left(e^{\sum_{x\in \cB_z}(\alpha \go_x-\gl(\alpha))} \right).
 \end{multline}
The first term is equal to 
\begin{equation}\label{zeconti}
 \log   \tilde \bP^n_{N,\gb}\left[\cD_z \ | \ U(\phi)=\gG \right]
 +\log \tilde \mu_{z,0} (e^{\sum_{x\in \cB_z}h\delta_x}).
\end{equation}
Similarly to  \eqref{leptit} using that 
$M=h^{-\frac{1}{50}}$ and recalling that $\kappa=100$, we have
\begin{equation}\label{lesecondpetit}
\tilde \bP^n_{N,\gb}\left[\cD^{\cc}_z \ | \ U(\phi)=\gG \right]\le C \binom{M^2}{\kappa} e^{-30 \beta  n} \le h^3/2.
\end{equation}
Hence the first term in \eqref{zeconti} is larger than $-h^3$.
The second term  is larger (by Jensen's inequality) than 
\begin{equation}\label{oneterm}
 \tilde \mu_{z,0}\left(\sum_{x\in \cB'_z} \delta_x\right)\ge (M-2n^2)^2( \theta_1 h^{2}- C h^{5/2})\ge M^2 \left( \theta_1 h^{2} - h^{2+\gep}\right)
\end{equation}
where the first inequality is a consequence of \eqref{toutt}
and the second one follows  if from the fact that $n$ is of order $|\log h|$ and $M= h^{-1/50}$ (assuming that $\gep<1/50$).
Now concerning the second term in \eqref{thebegin}, we note that {\blue using \eqref{touttt}} and our choice of parameter we have
\begin{equation}
  \tilde \mu_{z,h}( e^{\sum_{x\in \cB_z}(\alpha \go_x-\gl(\alpha))})
  \ge \tilde \mu_{z,h}\left(\sum_{x\in \cB_z} \delta_x=0\right)\ge {\blue 1 - C e^{-4\beta n}|\cB_z|} \ge  1-\sqrt{h}.
\end{equation}
Hence using the inequality
$$\log x\ge (x-1)+\frac{1}{2y^2}(x-1)^2$$
valid for all $x\ge y$ we obtain that {\blue
\begin{multline}\label{zebgin}
 \bbE \left[\log  \tilde \mu_{z,h}( e^{\sum_{x\in \cB_z}(\alpha \go_x-\gl(\alpha))}) \right]\\
 \ge
 \bbE \left[ \tilde \mu_{z,h}( e^{\sum_{x\in \cB_z}(\alpha \go_x-\gl(\alpha))}-1)\right]+
 \frac{\bbE \left[ \tilde \mu_{z,h}( e^{\sum_{x\in \cB_z}(\alpha \go_x-\gl(\alpha))}-1)^2\right]}{2(1-\sqrt{h})^2}.
\end{multline}
The first term in the r.h.s.\ equals zero.}
Now the variance term above can be expressed as 
\begin{multline}
 \tilde \mu^{\otimes 2}_{z,h}( e^{[\gl(2\alpha)-2\gl(\alpha)]\sum_{x\in \cB_z} \delta^{(1)}_x\delta^{(2)}_x}-1)\\
 \le (e^{\gl(2\alpha)-2\gl(\alpha)}-1) \tilde \mu^{\otimes 2}_{z,h}\left(\sum_{x\in \cB_z} \delta^{(1)}_x\delta^{(2)}_x=1\right)+e^{\kappa[\gl(2\alpha)-2\gl(\alpha)]}\tilde \mu^{\otimes 2}_{z,h}\left(\sum_{x\in \cB_z} \delta^{(1)}_x\delta^{(2)}_x\ge 2\right).
\end{multline}
Using Markov inequality to estimate the first term we have  then from Lemma \ref{zetims} (more precisely \eqref{toutt}-\eqref{touttt}) 
\begin{multline}\label{zrip}
 \tilde \mu^{\otimes 2}_{z,h}\left(\sum_{x\in \cB_z} \delta^{(1)}_x\delta^{(2)}_x\ge 1\right)\le \tilde \mu^{\otimes 2}_{z,h}\left(\sum_{x\in \cB_z} \delta^{(1)}_x\delta^{(2)}_x\right) \\ \le 
 C|\cB_z\setminus \cB'_z |  e^{-8\gb n}
 + | \cB'_z | (\theta^2_1 e^{-8\gb n}+C h^5/2) \le M^2 ( \theta^2_1 e^{-8\gb n}- h^{2+\gep}).
\end{multline}
To estimate the second term, we combine \eqref{mcdouble} and a union bound  we have 
\begin{multline}\label{zrop}
 \tilde \mu^{\otimes 2}_{z,h}(\sum_{x\in \cB_z} \delta^{(1)}_x\delta^{(2)}_x\ge 2)\le \sum_{x,y\in \cB_z} \mu^{\otimes 2}_{z,h}(\delta^{(1)}_x\delta^{(2)}_x=1 \text{ and } \delta^{(1)}_y\delta^{(2)}_y=1  )\\
 = \sum_{x,y\in \cB_z} \tilde \mu_{z,h}(\delta_x\delta_y)^2\le C M^4 e^{-12\gb n}\le  M^2 h^{5/2}.
\end{multline}
Combining the inequalities \eqref{zebgin}-\eqref{zrop} we obtain
\begin{equation}\label{twoterm}
  \bbE \log  \tilde \mu_{z,h}( e^{\sum_{x\in \cB_z}(\alpha \go_x-\gl(\alpha))}) \ge - M^2  (\theta^2_1 e^{-8\gb n}+ C h^{2+\gep} ).
\end{equation}
Hence from \eqref{thebegin}, \eqref{oneterm} and \eqref{twoterm}, we can conclude that 
\begin{equation}
 \log \tilde \bE^n_{N,\gb}\left[ \ind_{\cD_z}e^{\sum_{x\in \cB_z} (\alpha \go_x- \gl(\alpha)+h)\delta_z}  \ | \ U(\phi)=\gG \right]
 \ge  M^2 (h e^{-4\gb n}-    \theta^2_1 e^{-8\gb n}- C h^{2+\gep} ).
\end{equation}
\qed

\begin{proof}[Proof of Lemma \ref{zetims}]
The three statements are deduced from Proposition \ref{rouxrou}.
For $\gG$ a realization of $U(\phi)$ which includes no contour longer than $n^4$, let us consider
\begin{equation}
\tilde \mu(\cdot):=\tilde \bE^n_{N,\gb}\left[\phi \restrict_{\cB_z}\in \cdot \ | \ U(\phi)=\gG \right] 
\end{equation}
Note that $\tilde \mu$ is of the form $\bP^n_{\bL, \gL,\gb}$ (here $\gL=\cB_z$ and the contour restriction $\bL$ is determined by the boundary condition  $U(\phi)=\gG$ as well as  by the exclusion of long contours).
Recall that by definition any point in $\cB'_z$ is located at a distance from the boundary which is larger than $n^4/2$. Hence  we have for the set  $\bL$ which appears implicitly in the definition of $\tilde \mu$ and any choice of $x\in \cB'_z$ (recall \eqref{deffdiss})
\begin{equation}
 d(x,\bL^{\cc})\ge n^2.
\end{equation}
Hence we can deduce from \eqref{zups}-\eqref{zup2s}-\eqref{zup3s} that 
\begin{equation} \label{leca0}
 \begin{split}
 \tilde \mu ( \delta_x ) &\le C e^{-4\beta n}, \quad \quad  \forall x \in \cB_z 
 \\
 |\tilde \mu ( \delta_x )-\theta_1 e^{-4\beta n}| &\le C e^{-6\beta n}, \quad  \quad \forall x \in \cB'_z,\\
 \tilde \mu ( \delta_x \delta_y)&\le C e^{-6\beta n}, \quad \quad \forall x,y \in \cB_z.
 \end{split}
\end{equation}
To conclude we only need to show that for any event $A$ we have 
\begin{equation}\label{alla}
 |\tilde \mu_{z,h}(A)-\tilde \mu(A)| \le h^3 + 2 M^2 h \mu(A).
\end{equation}
Indeed using \eqref{alla} for the events $\{\delta_x=1\}$ and $\{\delta_x\delta_y=1\}$ and recalling that $M=h^{-1/50}$ we can deduce all the required estimates from  \eqref{leca0}.
Now let us prove \eqref{alla}, we have
\begin{equation}
 \tilde \mu_{z,h}(A)= \frac{\tilde \mu( e^{h\sum_{x\in \cB_z} \delta_x}\ind_{\cD_z\cap A})}{\tilde \mu( e^{h\sum_{x\in \cB_z} \delta_x}\ind_{\cD_z})}=(1+ r(A,h)) \tilde \mu( A \ | \ \cD_z)
\end{equation}
where $r(A,h)$  satisfies
\begin{equation}
 e^{-M^2 h}-1 \le r(A,h)\le e^{M^2 h}-1.
\end{equation}
Thus for $h$ sufficiently small $|r(A,h)|\le (3/2) M^2 h$.
 Hence we can conclude by observing that from \eqref{lesecondpetit} we have
$$|\tilde \mu( A \ | \ \cD_z)-\tilde \mu (A)|\le  h^3/2 .$$

\end{proof}

\section{Upper bound on the free energy}\label{sec:daupper}

\subsection{Result and strategy of proof}

The aim of the section is to prove the following quantitative statement, which together with Proposition \ref{dalower}, completes the proof of Theorem \ref{mainres}.

\begin{proposition}\label{daupper}
There exist positive constants $\beta_0$ and $\gep$ such that for all $\gb\ge \gb_0$ and $\alpha>0$ there exists a constant $C(\alpha,\beta)$ such that for all $h>0$ 
  \begin{equation}\label{zookup}
\bar \tf_{\beta}(\alpha,h) \le  G_{\beta}(\alpha,h)+ C(\alpha,\beta) h^{2+\gep}
 \end{equation}

 \end{proposition}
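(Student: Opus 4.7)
The plan is to dualise the lower bound of Proposition \ref{dalower} by means of a novel \emph{large contour decomposition} designed to prove that the field is essentially flat at a single integer height in most of space. Fix a threshold $L=|\log h|^{100}$ and split the external contours of $\phi$ into \emph{large} ones ($|\tilde\gamma|\ge L$) and \emph{small} ones. Using Lemma \ref{restrict} applied to external contours, together with \eqref{contourdecomp}, I would write
\[
Z^{n,h,\alpha,\go}_{\gb,N}=\sum_{\gG^{*}}\Bigl(\prod_{\gamma^{*}\in\gG^{*}}\!\!\!w(\gamma^{*})\Bigr)\!\prod_{R\in\cR(\gG^{*})}\!\tilde Z^{\,n(R),h,\alpha,\go}_{R,\gb},
\]
where $\gG^{*}$ ranges over compatible collections of large external contours, $\cR(\gG^{*})$ is the induced partition of $\gL_{N}$, $n(R)\in\bbZ$ is the height on $\partial R$ prescribed by $\gG^{*}$, $\tilde Z^{\,\cdot}_{R,\gb}$ is the reduced partition function on $R$ restricted to configurations with all contours of length $<L$, and $w(\gamma^{*})\le (e^{\gb|\tilde\gamma^{*}|}-1)^{-1}$. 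A Peierls enumeration as in the proof of Lemma \ref{zerro} will yield $\sum_{\gG^{*}}\prod w(\gamma^{*})\le \exp\!\bigl(CN^{2}e^{-\gb L/2}\bigr)$, whose logarithm is $o(N^{2}h^{2+\gep})$ by the choice of $L$.

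\textbf{Core lemma: a sharp local upper bound.} The heart of the argument will be to prove that, for every simply connected $R\subset\gL_{N}$ and every $n\ge 0$,
\[
\bbE\log\tilde Z^{\,n,h,\alpha,\go}_{R,\gb}\;\le\;|R|\bigl(p_{n}h-\tfrac{1}{2}p_{n}^{2}V\bigr)+C|R|h^{2+\gep},
\]
with $p_{n}:=\theta_{1}e^{-4\gb n}$ and $V:=e^{\gl(2\alpha)-2\gl(\alpha)}-1$. Since $p_{n}h-\tfrac{1}{2}p_{n}^{2}V\le G_{\gb}(\alpha,h)$ for every $n$, this immediately gives $|R|G_{\gb}(\alpha,h)+$ error. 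To prove it, I would set $X:=\bE^{n,L}_{R,\gb}[e^{\sum(\alpha\go_{x}-\gl(\alpha)+h)\delta_{x}}]$ and $U:=X/\bbE X-1$, and combine the elementary inequality $\log(1+u)\le u-\tfrac{u^{2}}{2}+\tfrac{u_{+}^{3}}{3}$ (valid for $u>-1$) with $\bbE U=0$ to obtain
\[
\bbE\log X\;\le\;\log\bbE X \;-\;\tfrac{1}{2}\tfrac{\Var X}{(\bbE X)^{2}}\;+\;\tfrac{1}{3}\bbE[U_{+}^{3}].
\]
The first term $\log\bbE X=\log\bE^{n,L}_{R,\gb}[e^{h\sum\delta_{x}}]$ is a homogeneous log-partition function that I would factorise across a sub-grid with spacing $\ge L$ by using \eqref{zup3s} (Proposition \ref{rouxrou}(iii)) to decouple contributions, yielding $\log\bbE X\le|R|(p_{n}h+O(h^{2+\gep}))$. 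The same decoupling will give $\Var X/(\bbE X)^{2}\ge |R|p_{n}^{2}V(1-o(1))$: a direct computation shows that $\bbE X^{2}/(\bbE X)^{2}$ equals, up to a mild $h$-tilt, the two-replica expectation $\bE^{n,L,\otimes 2}_{R,\gb}[e^{\log(1+V)\sum_{x}\delta^{(1)}_{x}\delta^{(2)}_{x}}]$, whose first-order expansion in $V$ is $1+V|R|p_{n}^{2}(1+o(1))$ again by Proposition \ref{rouxrou}(iii). Finally, the cubic remainder $\bbE[U_{+}^{3}]$ is controlled by expanding the third moment of $X-\bbE X$ over sums of triples and using \eqref{allorder} together with the stochastic domination of $\sum_{x}\delta_{x}$ granted by Lemma \ref{restrict}(A) and \eqref{lotzof}, producing an $O(|R|h^{3})$ term absorbed into the $h^{2+\gep}$ error.

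\textbf{Assembly and main obstacle.} Inserting the core lemma into the factorisation, using $\sum_{R}|R|\le N^{2}$ and the Peierls bound on $\sum_{\gG^{*}}\prod w$, would give
\[
\bbE\log Z^{n,h,\alpha,\go}_{\gb,N}\le N^{2}G_{\gb}(\alpha,h)+CN^{2}h^{2+\gep},
\]
from which \eqref{zookup} follows after dividing by $N^{2}$ and letting $N\to\infty$. The hard part will be the sharpness of the $-\tfrac{1}{2}p_{n}^{2}V$ coefficient in the core lemma: one needs a \emph{lower} bound on $\Var X/(\bbE X)^{2}$ with the exact constant $V$, which relies on the uniformity of $p_{n}=\theta_{1}e^{-4\gb n}$ under arbitrary contour restrictions (Proposition \ref{rouxrou}(iii)), together with a sub-quadratic control on the cubic remainder $\bbE[U_{+}^{3}]$. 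A secondary tension is the calibration of $L$: large enough that the Peierls weight of large contours dominates any possible local gain they could enable, yet small enough that the $|\partial R|\cdot L^{2}$-type boundary-layer losses inside the core lemma remain below the $|R|h^{2+\gep}$ budget. The polylog choice $L=|\log h|^{100}$ is designed to thread this needle while keeping all error terms negligible compared to the target bound.
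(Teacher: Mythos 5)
Your architecture is genuinely different from the paper's (which uses fractional moments $\bbE[Z^{\theta}]$ with $\theta=h^{\gep}$, a coarse-grained cylinder decomposition, and a H\"older inequality against a product Bernoulli reference measure), but as written it has two gaps that I do not see how to repair without importing essentially that machinery.

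First, the reduction from $\bbE\log Z$ to your core lemma is not justified. From $Z=\sum_{\gG^{*}}\bigl(\prod w(\gamma^{*})\bigr)\prod_{R}\tilde Z_{R}$ the best elementary bound is $\log Z\le \log\bigl(\sum_{\gG^{*}}\prod w\bigr)+\max_{\gG^{*}}\sum_{R}\log\tilde Z_{R}$, and after taking $\bbE$ you are left with $\bbE\bigl[\max_{\gG^{*}}(\cdots)\bigr]$, not $\max_{\gG^{*}}\bbE[\cdots]$. The number of compatible collections of large contours is exponential in $N^{2}$, so exchanging the expectation with the maximum requires either a union bound with concentration at scale $h^{2+\gep}N^{2}$ (which you do not provide) or the interchange $\bbE[(\sum_{i}X_{i})^{\theta}]\le\sum_{i}\bbE[X_{i}^{\theta}]$ that motivates the paper's fractional-moment step; the coarse-graining there exists precisely so that the entropy of the decomposition is beaten by the Peierls weight $e^{-\theta\gb L|\tilde\kC|/10}$ \emph{after} raising to the power $\theta$.

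Second, and fatally for the core lemma, the cubic remainder $\bbE[U_{+}^{3}]$ is not $O(|R|h^{3})$ when $R$ is macroscopic (and $R=\gL_{N}$ is a typical region when no large contour is present). Your own variance computation gives $\Var(X)/(\bbE X)^{2}\asymp|R|p_{n}^{2}V\asymp|R|h^{2}$ at the optimal $n$, so $U$ has standard deviation of order $\sqrt{|R|}\,h\to\infty$; then $\bbE[U_{+}^{3}]\gtrsim(\bbE U^{2})^{3/2}\asymp(|R|h^{2})^{3/2}$, which exceeds the budget $|R|h^{2+\gep}$ as soon as $|R|\gg h^{-2-2\gep}$. (The additivity of third \emph{cumulants} that you invoke does not apply to $\bbE[U_{+}^{3}]$, which carries no cancellation.) The Taylor expansion of $\log$ around the mean is only valid on cells of side $M=h^{-c}$ with $c$ small and after truncating the number of contacts — exactly the setup of Lemmas \ref{firstlemma} and \ref{secondlemma} for the lower bound — and there it produces the \emph{wrong sign} for an upper bound unless you can also prove a matching \emph{lower} bound on $\Var(X)/(\bbE X)^{2}$ with the exact constant $V$, i.e.\ sharp two-point lower bounds $\bP(\delta_{x}=\delta_{y}=1)\ge p_{n}^{2}(1-o(1))$ uniformly over contour restrictions, which Proposition \ref{rouxrou} does not supply ($\delta_{x}$ is neither increasing nor decreasing, so FKG does not help). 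The paper's route through H\"older with $g(\go)=\prod_{x}(1+p_{x}(\xi_{x}-1))^{1/(1-\theta)}$ avoids both issues: it reduces everything to one-site quantities $\varrho_{1},\varrho_{2},\varrho_{3}$ plus an annealed-type bound that needs only an \emph{upper} bound on the contact density.
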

To explain the proof strategy, we need to   mention a result from \cite{cf:GL3}, which allows to bound the expectation of the log-partition function without using any information about the distribution of the contact set $(\delta_x)_{x\in \gL_N}$.

\begin{proposition} 
\label{th:ub}
{\blue
Consider $\gL$ a finite set, $(\go_x)_{x\in \gL}$ a field of IID  positive random variables with probability  distribution denoted $\bbP$ and  
an arbitrary random vector $(\delta_x)_{x\in \gL}$ on $\{0,1\}^{\gL}$  with probability distribution denoted by $\bP_{\gL}$. }
Then we have 
\begin{equation}\label{thegeneral}
 \bbE \log \bE_{\gL}\left[ e^{\sum_{x\in \gL}(\alpha \go_x-\gl(\alpha))\delta_x} \right]
 \le |\gL|\max_{p\in[0,1]} \bbE \left[ \log \left(1+p(e^h \xi-1) \right) \right]
\end{equation}
where $\xi:=e^{\alpha \go-\gl(\alpha)}$.
\end{proposition}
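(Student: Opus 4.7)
The plan is to prove the bound by a telescoping decomposition of the log-partition function, reducing the estimate on each increment to an optimization over a single scalar $p \in [0,1]$. (I will interpret the left-hand side as containing an implicit $+h$ term in the exponent, matching the rest of the paper; without it the inequality is trivial, since $\mathbb{E}[\xi]=1$ gives $\Phi(p):=\mathbb{E}[\log(1+p(\xi-1))]\le 0$ by Jensen.)

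First, fix any enumeration $\Lambda=\{x_1,\dots,x_N\}$ with $N=|\Lambda|$, set $H_x:=\alpha\omega_x-\lambda(\alpha)+h$, and define the partial partition functions
$$ Z_k \;:=\; \mathbf{E}_{\Lambda}\Bigl[\,\prod_{i\le k} e^{H_{x_i}\delta_{x_i}}\Bigr], \qquad Z_0=1. $$
Writing $\log Z_N=\sum_{k=1}^{N}\log(Z_k/Z_{k-1})$, the key observation is that each ratio is the expectation of $e^{H_{x_k}\delta_{x_k}}$ under the tilted probability measure $\mathbf{P}_{\Lambda}^{(k-1)}$ whose density with respect to $\mathbf{P}_{\Lambda}$ is proportional to $\prod_{i<k}e^{H_{x_i}\delta_{x_i}}$. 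Since $\delta_{x_k}\in\{0,1\}$, the identity $e^{H_{x_k}\delta_{x_k}}=1+\delta_{x_k}(e^{H_{x_k}}-1)$ yields
$$ \frac{Z_k}{Z_{k-1}} \;=\; 1 + p_k\bigl(e^{H_{x_k}}-1\bigr), \qquad p_k:=\mathbf{E}_{\Lambda}^{(k-1)}[\delta_{x_k}]\in[0,1]. $$

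Second, I would exploit the crucial fact that $p_k$ depends only on $(\omega_{x_i})_{i<k}$: the tilting involves $H_{x_i}$ with $i<k$, and the law of $(\delta_x)$ is independent of $\omega$ by hypothesis. Hence, letting $\cF_{k-1}=\sigma(\omega_{x_i},\ i<k)$ and using the IID structure of $\omega$,
$$ \mathbb{E}\bigl[\log(1+p_k(e^{H_{x_k}}-1))\,\big|\,\cF_{k-1}\bigr] \;=\; \Phi(p_k), \qquad \Phi(p):=\mathbb{E}\bigl[\log\bigl(1+p(e^h\xi-1)\bigr)\bigr], $$
with $\xi:=e^{\alpha\omega-\lambda(\alpha)}$. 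Since $p_k\in[0,1]$ almost surely, $\Phi(p_k)\le\max_{p\in[0,1]}\Phi(p)$, and summing the telescoped increments over $k=1,\dots,N$ after taking expectations gives the claimed bound $\mathbb{E}\log Z_N\le|\Lambda|\max_{p\in[0,1]}\Phi(p)$.

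There is no serious obstacle here; the only point requiring a little care is verifying that the tilted marginal $p_k$ is $\cF_{k-1}$-measurable, so that when the outer $\mathbb{E}$ is split via Fubini the independent variable $\omega_{x_k}$ can be integrated out inside $\Phi$. The enumeration of $\Lambda$ is arbitrary and plays no role in the final statement, as expected.
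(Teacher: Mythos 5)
Your proof is correct and is essentially the argument the paper has in mind: the telescoping over an enumeration of $\gL$, with the ratio $Z_k/Z_{k-1}=1+p_k(e^{H_{x_k}}-1)$ and the observation that $p_k$ depends only on $(\go_{x_i})_{i<k}$, is just the unrolled form of the induction on $|\gL|$ that the paper invokes (with details in \cite{cf:GL3}). You were also right to read the left-hand side as carrying the $+h$ term, consistent with how the bound is applied in \eqref{dopez}.
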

The result can simply be obtained by induction on the number of vertices $|\gL|$.
Let us set  
$\xi_x:=e^{\alpha \go_x-\gl(\alpha)}$.
The above result indicates in particular that the expectation is maximized when
$(\delta_x)_{x\in \gL}$ is a field of IID Bernoulli random variables with a parameter which maximizes the left-hand side of \eqref{thegeneral}.
Using Proposition \ref{th:ub} for our partition function we obtain immediately that 
\begin{equation}\label{dopez}
\bar \tf_{\beta}(\alpha,h)\le \max_{p\in[0,1]} \bbE \left[ \log \left(1+p(e^h \xi-1) \right) \right].
\end{equation}

\medskip
{\blue
The bound \eqref{dopez}, while not sharp is going to help us to provide the intuition for the proof of Proposition \ref{daupper}. Let us develop on this point.
If one fixes the boundary condition equal to some large $n$, the distribution of $(\ind_{\{\phi(x)=0\}})_{x\in \gL_N}$ is a good approximation of a Bernoulli product measure with parameter   $p_n=\theta_1 e^{-4\gb n}$ (cf. Proposition  \ref{rouxrou}). If one temporarily accepts that the best optimization one can make is select the optimal boundary condition (like we have done for the lower bound), it looks plausible that the maximum in \eqref{dopez} should not be taken over all values of $p$ but only along the sequence $p_n$ yielding the following approximation for the free energy
$$ \max_{n\ge 1} \bbE \left[ \log \left(1+\theta_1 e^{-4\gb n}(e^h \xi-1) \right) \right].$$
This latter quantity is asymptotically equivalent to $G_{\gb}(\alpha,h)$ (cf.\ Section \ref{ozermodel}).

\medskip

 To make this reasoning rigorous, we are going to show that after conditioning to the realization of  \textit{large} contours (rigorously defined below), in most of the box $\gL_N$, the process $(\ind_{\{\phi(x)=0\}})_{x\in \gL_N}$  is locally well approximated by IID Bernoulli variables of parameter $p_n$ for some value of $n$.} More precisely, we show that large contours occupy a small fraction of the  space and that in regions where there are no large contour, 
the Bernoulli field is a good approximation.
 We proceed as follows:

\begin{itemize}
 \item Firstly, we make a decomposition of the partition function based on the  realization of the set of large contours. In order to use some convenient sub-additivity property we 
 replace $\bbE \log Z_N$ by $ \frac{1}{\theta} \log \bbE Z^{\theta}_N$ for some $\theta\in (0,1)$. By choosing $\theta$ very close to zero,
 we can make this change having a negligible effect.
 \item Then, after conditioning on the realization of the set of large contours, we divide $\gL_N$ into large cells, an by using the information given by Proposition \ref{rouxrou} concerning the contact density in each cell, we manage to obtain a sharp bound for the contribution of each cell to the $\log$-partition function.
\end{itemize}
We need to fix a few parameters. There is a wide range of valid choices, but the following one turns out to be convenient in the computations.
Given $\gep>0$ we fix (taking integer parts when necessary)
\begin{equation}\label{parametzer}
 \theta:= h^{\gep}, \quad  L:= h^{-2\gep}, \quad M:=L^2.
\end{equation}
We assume that $N$ is an integer multiple of $M$ (and thus of $L$).
Our first observation is that 
\begin{equation}\label{jenson}
 \bbE\left[ \log  Z^{h,\alpha,\go}_{N,\gb}\right]\le \frac{1}{\theta} \log  \bbE \left[ \left( Z^{h,\alpha,\go}_{N,\gb}\right)^{\theta}\right]
\end{equation} 
For $A\subset \gO_N$  we let $
Z^{h,\alpha,\go}_{N,\gb}(A)$ denote the reduced partition function restricted to $A$
$$ Z^{h,\alpha,\go}_{N,\gb}(A):=\bE_{N,\beta}\left[ e^{\sum_{x\in \gL_N}(\alpha \go_x-\gl(\alpha)+h)} \ind_{A}\right].$$
If $(A_i)_{i\in \cI}$ is a partition of $\gO_N$,
then using the inequality
 $(\sum_{i\in I} a_i)^{\theta}\le \sum_{i\in I}  a^{\theta}_i$ valid for any collection of positive real numbers and any $\theta\in (0,1)$,  we have
\begin{equation}\label{labeel}
 \left( Z^{h,\alpha,\go}_{N,\gb}\right)^{\theta}\le \sum_{i \in \cI} \left(Z^{h,\alpha,\go}_{N,\gb}(A_i)\right)^{\theta}.
\end{equation}
We partition $\gO_N$ according to the realizations of the set of large contours. We say that a contour is large if $|\tilde \gamma|\ge L$ and we let 
$\Upsilon^{\larg}(\phi)$ denote the set of large contours in $\Upsilon(\phi)$). For the inequality \eqref{labeel} to be not too far off we want 
 the cardinality of the set $\cI$ to be as small as possible. 
 To this end, we group realizations of  $\Upsilon^{\larg}(\phi)$ which are in a sense close to one another to include them in  a common event. This brings the necessity of introducing a \textit{coarse grained} version of contours on the scale $L$.

\subsection{Coarse grained large contour} \label{seccg}

We divide $\bbR^2$ into boxes of side-length $L$ by setting
$B_z:=[0,L)^2+Lz$. Recalling that 
\begin{equation}\label{deflarge}
 \Upsilon^{\larg}(\phi):=\{ \gamma \in \Upsilon(\phi) \ : \ |\tilde \gamma|\ge L \}.
\end{equation}
We wish to define coarse grained versions of the contours at scale $L$.
We start by defining  the coarse-grained trace of a contour 
\begin{equation}\label{cgtrace}
 \chi(\gamma):=\{ z\in \bbZ^2 \ : \ \tilde \gamma \cap  B_z\ne \emptyset \}
\end{equation}
where $\tilde \gamma$ above is identified with the union of the geometric segments associated with its edges.
Note that $\chi(\gamma)$ is a connected subset of $\bbZ^2$.
We also define $\Int_\gamma: \bbZ^2\to \{0,1/2,1\}$, {\blue the coarse grained \textit{interior function}  which is the coarse grained approximation of  the function $\ind_{\bar \gamma}$}
\begin{equation}
 \Int_\gamma(z):=\begin{cases}
                0 \text{ if } B_z \subset \bar \gamma^{\cc},\\
                1/2 \text{ if } z\in \chi(\gamma),\\
                1 \text{ if } B_z \subset \bar \gamma.
            \end{cases}
\end{equation}
An important observation is that $\Int_{\gamma}$ is not determined by $\chi(\gamma)$ (see Figure \ref{okay}). However, the definition implies that $\Int_{\gamma}$ is constant on all connected components of $\bbZ^2\setminus \chi$. Hence there are at most (and the counting is rough) $2^{|\chi|}$ possible interior functions corresponding to a given trace (this is because  $\bbZ^2\setminus \chi(\gamma)$ has at most $|\chi|$ connected components).

 \begin{figure}[ht]
\begin{center}
\leavevmode
\epsfxsize = 13 cm
\psfrag{L}{$L$}
\psfrag{r}{$L$}
\epsfbox{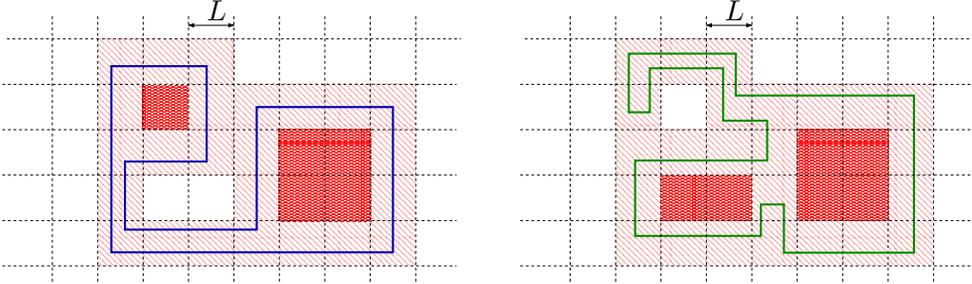}
\end{center}
\caption{\label{okay} 
Schematic representation of two contours with coarse grained trace and interior (the hatched part correspond to the value $1/2$ and the darker one to the value one). Note that while the two contours have the same trace, they have different interior functions.}
\end{figure}

\medskip

A coarse grained cylinder $\hat \chi$ is a triplet $(\chi, \iota, q)$ made of a connected subset of $\bbZ^2$, a function $\bbZ^2\to \{0,1/2,1\}$ which is constant on the connected components of $\bbZ^2 \setminus \chi$ and an intensity $q\in \bbZ$.
We say that $(\chi, \iota, q)$ is a coarse grained cylinder for $\phi$ if 
\begin{itemize}
 \item[(A)] There exists at least one contour $\gamma\in \Upsilon^{\larg} (\phi)$ such that $\chi(\gamma)=\chi$ and $\Int_{\gamma}=\Int$.
 \item[(B)] We have {\blue(recall that by convention if $\hat \gamma$ is a cylinder, $\gamma$ denotes the associated contour)}
 \begin{equation}
 \sum_{\{ \hat \gamma \in \hat \Upsilon^{\larg}(\phi) \ : \ \chi(\gamma)=\chi \text{ and  } \Int_{\gamma}=\iota \}} k(\hat \gamma)=q,
 \end{equation}
\end{itemize}
When $q\ne 0$, the condition (B) implies of course that (A) holds.
We let $\Xi(\phi)$ denote the set of coarse grained cylinders associated with $\phi$. 
{\blue We let
$\mathfrak{Z}_{N}$ denote the set of coarse grained cylinders that can be obtained from contours in $\gL_N$ (the set $\mathfrak{Z}_{N}$ also depends on $h$ via $L$ but we omit this dependence for better readability).  For $\mathfrak{C}\subset \mathfrak{Z}_{N}$
we introduce the event $B_{\kC}:=\{ \Xi(\phi)= \kC \}$. Note that we have not imposed any compatibility condition so that for many instances of $\kC$, $B_{\kC}$ is an empty event.
We consider 
 $\mathfrak{D}_N^{\mathrm{ld}}$ the set of ``low density'' coarse grained cylinder collections
\begin{equation}\label{ddk}
 \mathfrak{D}_N^{\mathrm{ld}}:=\{  \mathfrak{C}\subset \mathfrak{Z}_{N} \ : \ \sum_{(\chi,\iota, q) \in \mathfrak{C}}|\chi|\le \sqrt{h}N^2  \},
\end{equation}
and let  $\mathfrak{D}_N^{\mathrm{hd}}$ denote its complement in the power set $\cP(\mathfrak{Z}_{N})$.}
We set $\bar \cB:= \bigcup_{\kC\in \mathfrak{D}_N^{\mathrm{hd}}}\cB_{\kC}$.
Using \eqref{labeel} we have
\begin{multline}\label{labeelo}
 \bbE \left[\left( Z^{h,\alpha,\go}_{N,\gb}\right)^{\theta}\right]\le \sum_{\kC \in \mathfrak{D}_N^{\mathrm{ld}}} \bbE \left[\left(Z^{h,\alpha,\go}_{N,\gb}(\cB_\kC)\right)^{\theta}\right]+\bbE \left[\left(Z^{h,\alpha,\go}_{N,\gb}(\bar \cB)\right)^\theta\right]\\
 \le  \left(\sum_{\kC \in \mathfrak{D}_N^{\mathrm{ld}}} \bP_{N,\gb}[\cB_\kC]^{\theta}\right)
 \max_{\kC \in \mathfrak{D}_N^{\mathrm{ld}}} \bbE \left[ Z^{\theta}_\kC\right]+\bbE \left[\left(Z^{h,\alpha,\go}_{N,\gb}(\bar \cB)\right)^\theta\right].
 \end{multline}
 where
 \begin{equation}
 Z_{\kC} := \bE_{N,\gb}\left[ e^{\sum_{x\in \gL_N}(\alpha \go_x-\gl(\alpha)+h)\delta_x} \ | \ \cB_{\kC}\right].
\end{equation}
Let us first show that the second term in the r.h.s.\ of  \eqref{labeelo} is small. Using the annealed bound we have 
\begin{equation}
 \bbE \left[\left(Z^{h,\alpha,\go}_{N,\gb}(\bar \cB)\right)^\theta
 \right]\le  \left(Z^{h}_{N,\gb}(\bar \cB)\right)^\theta \le 
 (e^{N^2 h} \bP_{N,\beta}[ \bar \cB])^{\theta}.
\end{equation}
We can then use the following rough estimate (proved at the end of the section) which implies that the second term in the r.h.s.\ of \eqref{labeelo} is smaller than $1$.
\begin{lemma}\label{grancontour}

For $\beta$ sufficiently large and $h$ sufficiently small
we have
\begin{equation}
\bP_{N,\beta}[ \bar \cB]\le e^{- \sqrt{h} N^2/2}.
\end{equation}
\end{lemma}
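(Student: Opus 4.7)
The plan is to first rephrase the event $\bar \cB$ in terms of the large contours of $\phi$. By definition of $\mathfrak{D}_N^{\mathrm{ld}}$,
\[
 \bar \cB = \Bigl\{\phi\ :\ \sum_{(\chi,\iota,q)\in \Xi(\phi)} |\chi|\ >\ \sqrt{h}\,N^2\Bigr\}.
\]
Since two distinct large contours with the same coarse grained trace and interior function contribute to the same coarse grained cylinder of $\Xi(\phi)$, one has
\[
 \sum_{(\chi,\iota,q)\in \Xi(\phi)} |\chi|\ \le\ X(\phi) := \sum_{\gamma \in \Upsilon^{\larg}(\phi)} |\chi(\gamma)|,
\]
and therefore $\bar\cB \subset \{X>\sqrt{h}\,N^2\}$. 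This bookkeeping observation is the only genuinely conceptual part of the argument; the remainder is a Peierls-type Chebyshev estimate.

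\medskip

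Since $X$ is monotone in $\Upsilon(\phi)$ for the inclusion order, I would next apply the stochastic domination of Lemma \ref{restrict}(A) to compare with the independent product measure $\bQ_{N,\beta}$ and obtain
\[
 \bP_{N,\beta}[X>\sqrt{h}\,N^2]\ \le\ \bQ_{N,\beta}\Bigl[\, Y > \sqrt{h}\,N^2 \,\Bigr], \qquad Y := \sum_{\gamma\in\cC_N,\,|\tilde\gamma|\ge L}|\chi(\gamma)|\,\ind_{\{\gamma\in\chi\}}.
\]
Under $\bQ_{N,\beta}$ the indicators $\ind_{\{\gamma\in\chi\}}$ are independent with mean $e^{-\beta|\tilde\gamma|}$, so an exponential Chebyshev bound with parameter $\lambda=1$, combined with $1+x\le e^x$, yields
\[
 \bQ_{N,\beta}[Y>\sqrt{h}\,N^2]\ \le\ \exp\!\Bigl(-\sqrt{h}\,N^2 + \sum_{\gamma \in \cC_N,\,|\tilde\gamma|\ge L} e^{-\beta|\tilde\gamma|}\,\bigl(e^{|\chi(\gamma)|}-1\bigr)\Bigr).
\]

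\medskip

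To estimate the remaining sum I would use the deterministic bound $|\chi(\gamma)|\le 2|\tilde\gamma|$, which holds because every dual edge of $\tilde\gamma$ lies in at most two of the boxes $B_z$. Together with the classical Peierls count (at most $N^2\,3^\ell$ contours of length $\ell$ in $\gL_N$), the sum is bounded by
\[
 C\,N^2 \sum_{\ell \ge L} e^{-(\beta - 2 - \log 3)\ell}\ \le\ C'\,N^2 e^{-L},
\]
provided $\beta$ is large enough that $\beta - 2 - \log 3 \ge 1$. Since $L = h^{-2\gep}$ diverges as $h \to 0$, the right-hand side is at most $\sqrt{h}\,N^2/2$ for $h$ small, and the required estimate $\bP_{N,\beta}[\bar \cB] \le \exp(-\sqrt{h}\,N^2/2)$ follows.
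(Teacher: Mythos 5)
Your proof is correct and follows essentially the same route as the paper's: reduce $\bar\cB$ to a statement about a sum over large contours, use the stochastic domination of Lemma \ref{restrict} to pass to the product measure, and conclude with an exponential Chebyshev bound at $\lambda=1$ plus a Peierls count of contours of length $\ge L$. The only cosmetic difference is that you tilt by $\sum|\chi(\gamma)|$ (using $|\chi(\gamma)|\le 2|\tilde\gamma|$) where the paper tilts by $\cL^{\larg}(\phi)=\sum|\tilde\gamma|$, which changes nothing of substance.
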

Considering now the first term in the r.h.s.\ of \eqref{labeelo},
the factor $\left(\sum_{\kC \in \mathfrak{D}_N^{\mathrm{ld}}} \bP_{N,\gb}[\cB_\kC]^{\theta}\right)$ which materializes  how much we might have lost in our coarse graining decomposition can also be controlled with the rough estimates exposed in the following lemma, whose proof is postponed to the next subsection.
\begin{lemma}\label{commenfer}
 For every  $\mathfrak{C}\subset \mathfrak{Z}_{N}$ we have
 \begin{equation}\label{lezcontz}
  \bP_{N,\gb}[ \Xi(\phi)=\cB_\kC ] \le e^{-\frac{\gb L}{10} |\tilde \kC| }.
 \end{equation}
 where  $|\tilde \kC|$ is defined by $$|\tilde \kC|:= \sum_{ (\chi,\iota,q) \in \kC} 
 \max(|q|,1) |\chi|.$$
 As a consequence we have 
 \begin{equation}\label{lota}
 \sum_{\kC \in \mathfrak{D}_N^{\mathrm{ld}}} \bP_{N,\gb}[\cB_\kC]^{\theta}\le 
e^{N^2 h^3}.
\end{equation}
 \end{lemma}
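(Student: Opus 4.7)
The plan is to prove the Peierls-type bound \eqref{lezcontz} via a geometric length-trace inequality combined with entropy-weight trade-offs, and then derive \eqref{lota} by direct enumeration. First, I would establish the geometric inequality: for any contour $\gamma$ with $|\tilde\gamma|\ge L$ one has $|\tilde\gamma|\ge L|\chi(\gamma)|/8$. Parametrising $\tilde\gamma$ by arc-length and cutting it into $\lceil|\tilde\gamma|/L\rceil$ arcs of length at most $L$, each arc fits in an axis-aligned square of side $L$, which meets at most four of the cells $B_z$; hence $|\chi(\gamma)|\le 4(|\tilde\gamma|/L+1)$, and combined with $|\tilde\gamma|\ge L$ this yields the bound.

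To control $\bP_{N,\beta}[\Xi(\phi)=\kC]$, I would use the inclusion $\{\Xi(\phi)=\kC\}\subset\bigcup_{\Gamma^\star}\{\Gamma^\star\subset\hat\Upsilon^{\larg}(\phi)\}$, where a realisation $\Gamma^\star$ consists, for each $(\chi,\iota,q)\in\kC$, of a multiset of distinct cylinders $\{(\hat\gamma_j,k_j)\}$ with $\chi(\gamma_j)=\chi$, $\Int_{\gamma_j}=\iota$, and $\sum_j k_j=q$. Combining parts (A) and (B) of Lemma \ref{restrict} one obtains $\bP_{N,\beta}[\Gamma^\star\subset\hat\Upsilon(\phi)]\le\prod_{\hat\gamma\in\Gamma^\star}e^{-\beta k(\hat\gamma)|\tilde\gamma|}$, and since the sets $\cG(\chi,\iota):=\{\gamma:\chi(\gamma)=\chi,\,\Int_\gamma=\iota\}$ are disjoint for distinct $(\chi,\iota)$, the union bound factorises across $\kC$. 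It therefore suffices to prove
\[A(\chi,\iota,q):=\sum_{\text{realisations of }(\chi,\iota,q)}\prod_j e^{-\beta k_j|\tilde\gamma_j|}\le e^{-\beta L q|\chi|/10}.\]
For $q=1$ this follows from the length inequality and the $3^\ell$-bound on self-avoiding closed walks of length $\ell$ (the starting vertex lying in at most $CL^2|\chi|$ positions). For general $q$ I would use the generating function identity
\[A(\chi,\iota,q)=[x^q]\exp\Bigl(\sum_{k\ge 1}\tfrac{x^k}{k}S_k\Bigr),\qquad S_k:=\sum_{\gamma\in\cG(\chi,\iota)}e^{-\beta k|\tilde\gamma|},\]
choose $x=e^{\beta L|\chi|/16}$ and use $S_k\le S_1 e^{-(k-1)\beta L|\chi|/8}$ to bound the exponent by a constant; the $x^{-q}$ factor then produces the required decay for $\beta$ large enough.

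Finally, \eqref{lota} follows by direct enumeration: connected sets $\chi\subset\mathbb{Z}^2$ of size $k$ in the coarse grained box $\lint 1,N/L\rint^2$ number at most $(N/L)^2 C_0^k$, interior functions $\iota$ by $2^{k+1}$, and $\sum_{q\ne 0}e^{-\theta\beta L|q||\chi|/10}\le 3e^{-\theta\beta L|\chi|/10}$. An exponential generating function expansion then yields
\[\sum_\kC\bP_{N,\beta}[\cB_\kC]^\theta\le\exp\bigl(C(N/L)^2 e^{-\theta\beta L/10}\bigr),\]
and since $\theta L=h^{-\gep}$ by \eqref{parametzer}, the exponent is super-polynomially small in $h$, easily fitting inside $e^{N^2 h^3}$. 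The main obstacle is the intensity bookkeeping in $A(\chi,\iota,q)$: a coarse grained cylinder of total intensity $q$ can arise from arbitrarily many distinct contours with intensities summing to $q$, and a clean bound requires the generating function manipulation above rather than a direct combinatorial estimate.
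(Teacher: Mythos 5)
Your overall strategy is the paper's: the length--trace inequality $|\tilde\gamma|\ge cL|\chi(\gamma)|$, a union bound over the contour/cylinder realisations of each coarse grained cylinder using the stochastic domination and geometric intensities of Lemma \ref{restrict}, the $C^\ell$ entropy bound on contours of length $\ell$ anchored near $\chi$, and, for \eqref{lota}, the enumeration of connected traces, interior functions and intensities followed by $\sum_\kC\prod\le\prod(1+\cdot)\le\exp(\sum\cdot)$ together with $\theta L=h^{-\gep}$. The only genuinely different ingredient is that you organise the sum over intensity decompositions as an Euler-product generating function and extract the coefficient by a Chernoff bound $[x^q]F\le x^{-q}F(x)$, where the paper instead counts compositions of $q'$ directly (there are $2^{q'-1}$ pairs $(m,\mathbf{k})$ with $\sum k_i=q'$) and sums the resulting geometric series. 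Both devices deliver the same estimate.

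There is, however, one step that fails as written: the identity $A(\chi,\iota,q)=[x^q]\exp\bigl(\sum_k\tfrac{x^k}{k}S_k\bigr)$ tracks the \emph{unsigned} total intensity $\sum_j k_j$, whereas the intensity $q$ of a coarse grained cylinder is the \emph{signed} sum $\sum_j\gep(\gamma_j)k(\hat\gamma_j)$ (this is why the target bound carries $\max(|q|,1)$ rather than $q$: the case $q=0$, and more generally any realisation with cancellation between positive and negative contours, must be covered). As written, your union bound misses all mixed-sign realisations — e.g. a coarse grained cylinder with $q=1$ produced by a $+2$ contour and a $-1$ contour — and gives nothing for $q\le 0$. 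The repair stays inside your framework: introduce a factor $2$ per contour for the sign choice, let the generating variable count the unsigned total $q'=\sum_j k(\hat\gamma_j)$, observe that any realisation of $(\chi,\iota,q)$ has $q'\ge\max(|q|,1)$, and bound $\sum_{q'\ge\max(|q|,1)}[x^{q'}]F(x)\le x^{-\max(|q|,1)}(1-x^{-1})^{-1}F(x)$. This is exactly the summation over $q'\ge|q|\vee1$ that the paper performs combinatorially, and with it your argument goes through (the constant $1/10$ in the exponent is then recovered by taking $x=e^{c\gb L|\chi|}$ with $c$ slightly below $1/8$ and $\gb$ large).
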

\noindent The most delicate part is to control the value of $\bbE \left[ Z^{\theta}_\kC\right]$ for all $\mathfrak{C}\subset \mathfrak{Z}_{N}$.

\begin{proposition}\label{principalz}
 There exists positive $\beta_0$ and $h_0$ such that for $\beta \ge \gb_0$, and $h\in (0,h_0]$  we have for every $\mathfrak{C}\subset \mathfrak{Z}_{N,L}$,
 \begin{equation}
 \frac{1}{\theta} \log \bbE \left[ Z^{\theta}_{\kC}\right]
  \le N^2 \left(G_{\gb}(\alpha,h)+h^{2+\gep}\right)
 \end{equation}

 \end{proposition}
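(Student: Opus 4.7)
My plan is to condition on $\Xi(\phi)=\kC$, partition $\gL_N$ into cells of side $M$, and bound the contribution of each cell to $\frac{1}{\theta}\log\bbE[Z_\kC^\theta]$ by a quantity of the form $M^2\bbE[\log(1+p_{n_z}(e^h\xi-1))]+O(M^2 h^{2+\gep})$, where $n_z\in\bbZ$ is the effective height imposed on the cell $C_z$ by the innermost coarse-grained cylinder of $\kC$ enveloping it and $p_{n_z}:=\theta_1 e^{-4\gb n_z}$. Summing over cells and observing that each term is dominated by $M^2 G_\gb(\alpha,h)+O(M^2 h^{2+\gep})$ yields the claim.

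Call $C_z$ \emph{good} if its $L$-enlargement meets no coarse trace of $\kC$, otherwise \emph{bad}. The low-density condition $\kC\in\mathfrak{D}_N^{\mathrm{ld}}$ bounds the number of bad cells, so their total area is $O(\sqrt{h}N^2)$ and their contribution, via the trivial annealed estimate $hM^2$ per cell, is absorbed in the error. For a good cell the coarse contours of $\kC$ enveloping $C_z$ are separated from $C_z$ by a buffer of width $\ge L$, so only contours of $\phi$ of length $\le L$ can cross $\partial C_z$; using the contour representation (Lemma \ref{restrict}) together with the exponential decay of correlations inherited from Theorem \ref{infinitevol}, the conditional distribution of $(\delta_x)_{x\in C_z}$ under $\bP_\kC$ agrees with the unconstrained distribution under $\bP^{n_z}_{C_z,\gb}$ up to a multiplicative factor $1+O(e^{-c\gb L})$ per cell. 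Combined with independence of $(\go_x)$ across cells, this yields the factorization
\begin{equation*}
\bbE[Z_\kC^\theta]\le (1+o(1))\prod_z\bbE[(Z_z^{n_z})^\theta],\qquad Z_z^{n_z}:=\bE^{n_z}_{C_z,\gb}\!\left[e^{\sum_{x\in C_z}(\alpha\go_x-\gl(\alpha)+h)\delta_x}\right].
\end{equation*}

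The core step is the per-cell estimate
\begin{equation*}
\frac{1}{\theta}\log\bbE[(Z_z^{n_z})^\theta]\le M^2\bbE[\log(1+p_{n_z}(e^h\xi-1))]+CM^2 h^{2+\gep}.
\end{equation*}
I would use the cumulant expansion $\frac{1}{\theta}\log\bbE[X^\theta]=\bbE[\log X]+\tfrac{\theta}{2}\Var(\log X)+O(\theta^2)$ with the bound $\Var(\log Z_z^{n_z})=O(M^2 h^2)$, which follows from a bounded-differences argument since flipping a single $\go_x$ shifts $\log Z_z^{n_z}$ by $O(p_{n_z})=O(h)$; then the choice $\theta=h^\gep$ reduces us to bounding $\bbE[\log Z_z^{n_z}]$ with error $O(M^2 h^{2+\gep})$. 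For this I would replicate the second-moment argument of Lemma \ref{secondlemma} as an upper bound: truncate to $\cD_z=\{\sum_{x\in C_z}\delta_x\le\kappa\}$ (complement probability $O(h^3)$ by Proposition \ref{rouxrou}(iv)), expand $Z_z^{n_z}=1+\sum_{\emptyset\ne S\subset C_z}\bE^{n_z}[\delta_S]\prod_{x\in S}(e^{\alpha\go_x-\gl(\alpha)+h}-1)$ with $\delta_S:=\prod_{x\in S}\delta_x$, retain the $|S|\in\{1,2\}$ contributions, substitute $\bE^{n_z}[\delta_x]=p_{n_z}+O(e^{-6\gb n_z})$ from Proposition \ref{rouxrou}(iii) and $\bE^{n_z}[\delta_x\delta_y]\le Ce^{-6\gb n_z}$ from Proposition \ref{rouxrou}(ii), and apply $\log(1+y)\le y-y^2/2+y^3/3$ before averaging over $\go$.

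The main obstacle is this per-cell upper bound. A direct application of Proposition \ref{th:ub} yields only the unrestricted maximum $\max_{p\in[0,1]}\bbE[\log(1+p(e^h\xi-1))]$, which scales as $h^2$ but misses the quantization $p\in\{p_n\}$ that makes $G_\gb(\alpha,h)$ piecewise affine, so Proposition \ref{th:ub} cannot be used as a black box. The quantization must be imposed by hand, by exploiting that the marginal density of $\delta_x$ under $\bP^{n_z}_{C_z,\gb}$ is specifically $p_{n_z}$ and not a free parameter—information supplied by Proposition \ref{rouxrou}(iii). This makes the upper bound significantly more delicate than its Gaussian-free-field analog in \cite{cf:GL3}, where the quantization is absent, and also than the lower bound Lemma \ref{secondlemma}, in which one could freely restrict the SOS measure to a favorable sub-collection of configurations.
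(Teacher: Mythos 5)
Your overall architecture (good/bad cells determined by $\kC$, comparison of the contact field in a good cell with a Bernoulli field of quantized density $p_{n_z}$ via Proposition \ref{rouxrou}, and a Taylor/second-moment expansion per cell) matches the paper's, and you correctly diagnose why Proposition \ref{th:ub} cannot be used as a black box. However, the two steps you treat as routine are precisely where the difficulty sits, and as written they do not go through. First, the factorization $\bbE[Z_\kC^\theta]\le(1+o(1))\prod_z\bbE[(Z_z^{n_z})^\theta]$ is not justified: the cells are not independent under the conditioned SOS measure, and a ``multiplicative factor $1+O(e^{-c\gb L})$ per cell'' on marginals does not factorize a fractional moment of a partition function. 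If you decouple by taking worst-case boundary conditions you arrive at $\bbE\bigl[\prod_z\max_{\varphi}(Z_z^{\varphi})^\theta\bigr]$, where the maximum over boundary conditions sits \emph{inside} the disorder expectation and cannot be exchanged with $\bbE$. The paper's way around this is the H\"older/change-of-measure step \eqref{kolder}: tilting by $g(\go)=\prod_x(1+p_x(\xi_x-1))^{1/(1-\theta)}$ (the quantized Bernoulli partition function) converts $\bbE[Z_\kC^\theta]$ into a deterministic term $\bbE[g^\theta]$ times an annealed quantity $\bE_{N,\gb}[e^{\sum(\varrho_3(p_x)+h)\delta_x}\mid\cB_\kC]$, and only this \emph{deterministic} functional is factorized over cells by the domain Markov property with worst boundary conditions \eqref{wasq}. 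That tilting is the key missing idea; it is also the mechanism through which the quantization enters the final bound.

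Second, your treatment of the bad region is quantitatively insufficient: the bad cells cover an area of order $h^{1/2-O(\gep)}N^2$, so a trivial annealed bound of $h$ per site yields an error of order $h^{3/2-O(\gep)}N^2$, far larger than the allowed $h^{2+\gep}N^2$. One needs $O(h^2)$ per bad site, which is what Lemma \ref{techos} provides via the fractional-moment Bernoulli bound (Lemma \ref{th:ubfrac}); the fractional power is essential there to see the variance rather than the mean. Two further points require care: (i) the expansion $\frac1\theta\log\bbE[X^\theta]=\bbE[\log X]+\frac\theta2\Var(\log X)+O(\theta^2)$, used as an \emph{upper} bound, requires control of all exponential moments of $\log Z_z^{n_z}$, not just a bounded-differences estimate of its variance (and a truncation for unbounded $\go$); (ii) for cells with small or negative $n_z$ the parameter $p_{n_z}=\theta_1e^{-4\gb n_z}$ may exceed $Kh$ or even $1$, the cubic remainder $O(p^3)$ in your Taylor expansion is then not negligible, and a separate argument is needed — the paper uses the cutoff $p_x=\min(\theta_1 e^{-4\gb n_x},Kh)$ together with the lower bound \eqref{root} to show that such cells contribute negatively.
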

The proof of Proposition \ref{principalz} runs from   Section \ref{pprincp} to \ref{ccc}.
Note that as a consequence of Lemma \ref{grancontour}, Lemma \ref{commenfer} and Proposition \ref{principalz} we deduce from \eqref{labeelo} that for small $h$ we have 
\begin{equation}
  \bbE \left[\left( Z^{h,\alpha,\go}_{N,\gb}\right)^{\theta}\right]
  \le 1 + e^{ N^2 (G_{\gb}(\alpha,h)+h^3+h^{2+\gep})}.
\end{equation}
which is sufficient to deduce Proposition \ref{daupper} (recall \eqref{jenson}).
\qed
 
\subsection{Proof of Lemma \ref{grancontour}}

While the result could be deduced from Lemma \ref{commenfer}, it is substantially simpler to replicate the argument given in the proof  \cite[Lemma 8.3]{cf:part1}.  
  
  \medskip
  
  If $\phi \in \bar \cB$ then the sum of the length of its large contours has to be large. 
  Indeed for every coarse grained cylinders $(\chi,\iota,q)$ of $\phi$, there must be at least one contour $\gamma \in \Upsilon^{\larg}(\phi)$ which satisfies $\chi(\gamma)=\chi$, in particular the length of this contour must be larger than $|\chi|$ (in fact it must be larger than $(L/9)|\chi|$ but this is of no importance here). Hence the sum of the length of large contours must satisfy
  \begin{equation}
   \cL^{\larg}(\phi):=\sum_{\gamma \in \Upsilon^{\larg}(\phi)} |\tilde \gamma| \ge \sum_{(\chi, \iota, q)\in \Xi(\phi)} |\chi| \ge 
  \sqrt h  N^2.
  \end{equation}
Now we can follow the proof of \cite[Lemma 8.3]{cf:part1} which helps to control the Laplace transform of $\cL^{\larg}$.
Using the stochastic domination of Lemma \ref{restrict}, we have for $\gl\ge 0$ ({\blue the quantity $e^{\gl \cL^{\larg}(\phi)}$ is an increasing function of the set of contour $\Upsilon(\phi)$})
\begin{equation}
 \bE_{N,\gb} \left[ e^{\gl \cL^{\larg}(\phi)}\right]
 \le \prod_{\{ \gamma \in \cC_{\gL_N} \ : \ |\tilde \gamma| \ge L \}}
 (1+e^{-\beta |\tilde \gamma|}(e^{\gl|\tilde \gamma|}-1)).
\end{equation}
Only adding extra-factors which are all larger than one in the product we observe that 
$\prod_{\{ \gamma \in \cC_{\gL_N} \ : \ |\tilde \gamma| \ge L \}} \dots \le 
\prod_{x\in \gL_N}\prod_{\{\gamma\in \cC \ : \ x\in \bar \gamma \ , \ |\tilde \gamma| \ge L \} } \dots$, and by translation invariance we have
\begin{equation}
  \bE_{N,\gb} \left[ e^{\gl \cL^{\larg}(\phi)}\right]\le \left( \prod_{\{\gamma\in \cC \ : \ {\bf 0}\in \bar \gamma \ , \ |\tilde \gamma| \ge L \} }  (1+e^{-\beta |\tilde \gamma|}(e^{\gl|\tilde \gamma|}-1)) \right)^{N^2}
\end{equation}
Using that $\log (1+t)\le t$ and observing that there are at most $n4^n$ contour of length $n$ which goes around $0$, this yields in turn
\begin{equation}
 \frac{1}{N^2}  \log  \bE_{N,\gb} \left[ e^{\gl \cL^{\larg}(\phi)}\right]
 \le \sum_{\{\gamma\in \cC \ : \ {\bf 0}\in \bar \gamma \ , \ |\tilde \gamma| \ge L \} }   e^{-\beta |\tilde \gamma|}(e^{\gl|\tilde \gamma|}-1) \le \sum_{n\ge L} n 4^n e^{(\gl-\beta) n}.
\end{equation}
If $\beta \ge 3$ and $\gl= 1$, we obtain that the right hand side is smaller than $e^{-L/10}$ (which is smaller than any powers of $h$). 
Then we can conclude using the fact that 
\begin{equation}
 \bP_{N,\beta}[  \cL^{\larg}(\phi) \ge  \sqrt{h} N^2] \le e^{-\sqrt{h} N^2}  \bE_{N,\beta}  \left[  e^{\cL^{\larg}(\phi)}\right].
\end{equation}

\qed

 \subsection{Proof of Lemma \ref{commenfer}}
 
For notational simplicity we prove  a bound for the probability of the presence of a given coarse-grained cylinders 
 \begin{equation}\label{onecylinder}
  \bP_{N,\gb}\left[ (\chi,\iota,q)\in \Xi(\phi) \right]\le e^{-\frac{\gb L \max(|q|,1)|\chi|}{10}}.
 \end{equation}
 The reader can then check that the same proof yields for any finite family of coarse grained cylinders $(\chi_i,\iota_i,q_i)_{i\in \cI}$
  \begin{equation}\label{severalcylinder}
  \bP_{N,\gb}\left[ (\chi_i,\iota_i,q_i)\in \Xi(\phi) , i \in \mathcal I \right]\le e^{-\frac{\gb L \sum_{i\in \cI }\max(|q_i|,1)|\chi_i|}{10}},
 \end{equation}
 which is a stronger statement than \eqref{lezcontz}.
Given  positive integers $m\ge 1$ $k_1, k_2\dots, k_m\ge 1$   and $\gep_1,\dots,\gep_m\in
\{-1,1\}$ and  $\iota$ a  function $\bbZ^2\to \{0,1/2,1\}$, we are going to show that 
\begin{multline}\label{stouf}
 \bP_{N,\gb}\Big[ \exists \gamma_1,\dots, \gamma_m\in \Upsilon^{\larg}(\phi) \\
 \forall i \in \lint 1, m \rint, \gep(\gamma_i)=\gep_i, k(\gamma_i)=k_i \text{ and }\Int_{\gamma_i}=\iota  \Big]
 \le e^{-\frac{\gb L \sum_{i=1}^m k_i|\chi|}{8}}.
 \end{multline}
In order to deduce \eqref{onecylinder} from \eqref{stouf}, we use a union bound and sum over all  possible $m$, $k_i$ and $\gep_i$ which are such that
$\sum_{i=1}^m \gep_i k_i=q$ (excluding of course $m=0$ when $q=0$)
Note that we must have necessarily $\sum_{i=1}
^m k_i\ge |q|\vee 1$. 
Hence slightly overcounting,  we have 

\begin{equation}
   \bP_{N,\gb}\left[ (\chi,\iota,q)\in \Xi(\phi) \right]
   \le \sum_{q'\ge |q|\vee 1} \sum_{m\ge 1} \sum_{ \{ {\bf k} \ : \sum_{i=1}^{m}k_i=q' \}} 2^m e^{-\frac{\gb L q'}{8}}.
 \end{equation}
Note that given $q'$ there are exactly $2^{q'-1}$ possibilities to chose 
the pair $(m,{\bf k})$ and hence the above sum is smaller than
\begin{equation}
 \sum_{q'\ge |q|\vee 1} 4^{q'}e^{-\frac{\gb L q'}{8}},
\end{equation}
which is sufficient to conclude the proof if $L$ is sufficiently large.

Now let us prove \eqref{stouf}.
Let $z_{\min}$ be  the smallest element in $\chi$ for the lexicographical order. We observe now that a contour which satisfies $\Int_{\gamma}=\iota$ must have length at least 
$\ell_{\min}:=L\min(1, |\chi|/4)\ge L |\chi|/4$ (simply because $L$ step are needed to visit more than $4$ squares) and a point in $B_{z_{\min}}$.
For $\ell\ge \ell_{\min}$ there are thus  at most $4^\ell L^2$ possible geometric contours of length $\ell$.
Using a greedy union bound (choosing each of the contours independently and summing over all possible contours) we obtain when $\beta \ge 3$  the l.h.s.\ of \eqref{stouf} is smaller than 

\begin{multline}
 (L^2)^m\sum_{\ell_1,\dots,\ell_m \ge \ell_{\min}} 4^{\sum_{i=1}^m \ell_i }  e^{-\gb \sum_{i=1}^m \ell_i k_i} 
 \le (2 L^2)^m 4^{m \ell_{\min}}e^{- \gb|\chi| \ell_{\min}\sum_{i=1}^m k_i} \\ \le e^{-\frac{\gb L |\chi| \sum_{i=1}^m k_i}{8}}.
\end{multline}
To prove \eqref{lota} we observe that as a consequence of \eqref{lezcontz} we have
\begin{equation}
 \sum_{\kC \in \mathfrak{D}_N^{\mathrm{ld}}} \bP_{N,\beta}[\cB_{\kC}]^{\theta}\le  \sum_{\kC \subset \mathfrak Z_N} e^{-\frac{\gb L |\tilde \kC|\theta}{10}}= \prod_{(\chi,\iota,q)\in \mathfrak Z_N}\left(1+ e^{-\frac{\gb L |\tilde \kC|\theta}{10}}\right).
\end{equation}
Taking the logarithm we obtain that
\begin{equation}\label{yuy}
  \log \left( \sum_{\kC \in \mathfrak{D}_N^{\mathrm{ld}}} \bP_{N,\beta}[\cB_{\kC}]^{\theta}\right)
  \le \sum_{(\chi,\iota,q)\in \mathfrak Z_N }e^{-\frac{\gb L (|q|\vee 1) |\chi|\theta}{10}}.
\end{equation}{\blue
Let us first evaluate the sum over $q$ and $\iota$ for a fixed $\chi$.
We have (provided $L\theta=h^{-\gep}$ is sufficiently large)
\begin{equation}\label{sumonq}
 \sum_{q \in \bbZ}e^{-\frac{\gb L (|q|\vee 1) |\chi|\theta}{10}}\le 4e^{-\frac{\gb L  |\chi|\theta}{10}}.
\end{equation}}
Then there are at most $2^{|\chi|}$ interior functions corresponding to one coarse grained contour $\chi$.
Finally  the number of choice for a connected set $\chi$ with $|\chi|=k$ is at most $(N/L)^2 16^{k}$.  Indeed for every such set $S$, there exists a lattice path of length $2k-1$ whose range is exactly $S$. Then  $(N/L)^2$ is the number of options for the starting point and $16^k$ is an upper bound on the number of lattice paths. {\blue Combining the above observation we obtain that 
\begin{equation}
 \sum_{(\chi,\iota,q)\in \mathfrak Z_N }e^{-\frac{\gb L (|q|\vee 1) |\chi|\theta}{10}}
  \le \sum_{k\ge 1} 2^k  (N/L)^2 16^{k}   e^{-\frac{\gb L k\theta}{10}}\le N^2 e^{-\frac{\gb L  \theta}{10}}\le N^2 h^3/4.
\end{equation}
where the two last inequalities are valid if $\theta L= h^{-\gep}$ is sufficiently large, which in view of \eqref{yuy} is sufficient to conclude.}

\subsection{Proof of Proposition \ref{principalz}}\label{pprincp}

Now we want to make a coarser decomposition of our lattice.
We split $\gL_N$ into cells of size $M$ (recall that $M=L^2$.
More precisely we set for $y\in \lint 0, (N/M)-1\rint^2$ ({\blue see Figure \ref{ccy}})
$$\cC_y:= yM + \lint 2L+1, M-2L \rint^2 \text{ and } \bar \cC_y:= yM + [ 0, M )^2.$$

\begin{figure}[ht]
\begin{center}
\leavevmode
\epsfysize = 4 cm
\psfrag{cy}{$\mathcal C_y$}
\psfrag{barcy}{$\bar{\mathcal C}_y$}
\epsfbox{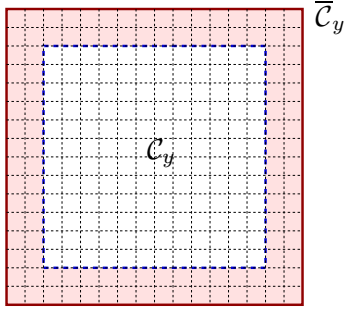}
\end{center}
\caption{\label{ccy} {\blue The  cell $\cC_y$ (enclosed by the dotted blue  line) and its slightly larger version $\bar { \cC}_y$ (enclosed in the solid red line). These cells are much larger than the cells $B_z$ (represented with thinner dotted lines, in the picture $L=4$). As the boundary of $\cC_y$ is at a distance $2L$ from that of $\bar{\cC_y}$, the behavior of $\phi$ inside $\cC_y$ is not much affected by boundary  effects after conditioning to the realization of $\phi$ outside $\bar{\cC}_y$, provided that $\cC_y$ is a good cell.}}
\end{figure}

Given $\kC$ a set of coarse grained contour, say that $\cC_y$ is a \textit{good cell} {\blue for $\kC$} if there are no coarse grained contours which intersects it. Or more precisely if 
$$ \forall (\chi,\iota,q)\in \kC, \quad  \forall z\in \chi, \quad B_z \cap  \bar \cC_y =\emptyset.$$
Our assumption  that our set of coarse grained contours has low density $\kC\in \mathfrak D^{\mathrm{ld}}_N$ ensures that there are at most $N^2  \sqrt{h}$ bad cells and thus that the proportion of bad cells is at most of order  $h^{\frac{1}{2}-8\gep}$ (recall \eqref{ddk}, we assume $\gep\le 1/20$).
We let $\cG_N(\kC)$ (good sites) and $\cR_N(\kC)$ (the rest) denote respectively the set of lattice sites which are resp.\ are not in  a good cell 
\begin{equation}
 \begin{split}
  \cG_N(\kC)&:= \bigcup_{\{y \ : \ \cC_y \text{ is good } \}} \cC_y,\\  \cR_N(\kC)&:=  \gL_N \setminus \cG_N(\kC).
 \end{split}
\end{equation}
From the definition of $\mathfrak D^{\mathrm{ld}}_N$, there exists a positive constant $C$ such that
\begin{equation}\label{boundingtherest}
|\cR_N(\kC)|\le C \left( N^2 h^{2\gep}+ N^2 h^{\frac{1}{2}-8\gep} \right) \le 2C h^{2\gep}.
\end{equation}
where the first term takes into account sites which are in $\bar \cC_y\setminus \cC_y$ and the second one, sites that are in bad cells. The second inequality comes from $\gep \le 1/20$.
We can write 
\begin{equation}\label{decozh}
Z_{\kC} = \bE_{N,\gb}\left[ e^{\sum_{x\in  \cG_N(\kC)}(\alpha \go_x-\gl(\alpha)+h)\delta_x} \ | \ \cB_{\kC}\right]
\mu^{\kC,\go}_N\left[ e^{\sum_{x\in\cR_N(\kC)}(\alpha \go_x-\gl(\alpha)+h)\delta_x} \right]
\end{equation}
Where $\mu^{\kC,\go}_N$ is defined by 
$$ \mu^{\kC,\go}_N(A):= \frac{\bE_{N,\gb}\left[\ind_A  \ e^{\sum_{x\in  \cG_N(\kC)}(\alpha \go_x-\gl(\alpha)+h)\delta_x} \ | \ \cB_{\kC}\right]}{\bE_{N,\gb}\left[e^{\sum_{x\in  \cG_N(\kC)}(\alpha \go_x-\gl(\alpha)+h)\delta_x} \ | \ \cB_{\kC}\right]}$$
Our first task, is to ensure that the sites  $x\in\cR_N$ do not yield a contribution larger than $h^2$ to the free energy.
We let $\bbE_{\cR_N}$ denote the expectation with respect to $(\go_x)_{x\in \cR_N}$.

\begin{lemma}\label{techos}
There exists two constant $C_{\alpha}$ and $h_0$ such that for all $h\le h_0$ and $\kC\in $
 
 \begin{equation}\label{aproov}
  \frac 1 {\theta}\log  \bbE_{\cR_N}\left[ \left(\mu^{\kC,\go}_N\left[ e^{\sum_{x\in\cR_N(\kC)}(\alpha \go_x-\gl(\alpha)+h)\delta_x} \right]\right)^{\theta}\right] \le C_{\alpha} |\cR_N(\kC)| h^2.
 \end{equation}
 \end{lemma}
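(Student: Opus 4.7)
The plan is to reduce the left-hand side of \eqref{aproov} to an IID-pinning setting by conditioning on $\go_{\cG_N(\kC)}$, and then to invoke a fractional-moment adaptation of Proposition \ref{th:ub}. Since $\go_{\cR_N(\kC)}$ is independent of $\go_{\cG_N(\kC)}$ and $\mu^{\kC,\go}_N$ depends only on the latter, for each fixed realization of $\go_{\cG_N(\kC)}$ the marginal $\mu_R$ of $\mu^{\kC,\go}_N$ on $(\delta_x)_{x\in\cR_N(\kC)}$ is a deterministic probability on $\{0,1\}^{\cR_N(\kC)}$, and the inner expectation in \eqref{aproov} equals $\mu_R[e^{\sum_{x\in\cR_N(\kC)}(\alpha\go_x-\gl(\alpha)+h)\delta_x}]$. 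It is therefore enough to establish the required bound uniformly in the choice of the deterministic prior $\mu_R$.

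The main step is the following $\theta$-moment version of Proposition \ref{th:ub}: for every probability $\bP_\gL$ on $\{0,1\}^\gL$ and every $\theta\in(0,1]$,
$$\bbE\!\left[\bE_\gL\!\left[e^{\sum_{x\in\gL}(\alpha\go_x-\gl(\alpha)+h)\delta_x}\right]^\theta\right] \le S(\theta,h)^{|\gL|}, \qquad S(\theta,h):=\sup_{p\in[0,1]}\bbE\!\left[(1+p(e^h\xi-1))^\theta\right].$$
The proof goes by induction on $|\gL|$, following the same scheme as Proposition \ref{th:ub}. Fixing a site $x_0$ and conditioning on $\delta_{x_0}$, one writes the partition function as $\bE_\gL[e^{\sum_{x\in\gL\setminus x_0}X_x}]\cdot(1+q(e^h\xi_{x_0}-1))$ for some $q\in[0,1]$ depending only on $\go_{\gL\setminus x_0}$; raising to the $\theta$-th power and taking $\bbE_{\go_{x_0}}$ first bounds the second factor by $S(\theta,h)$, and the inductive hypothesis applied to the marginal of $\bP_\gL$ on $\{0,1\}^{\gL\setminus x_0}$ handles the remaining factor.

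It remains to show that $\tfrac{1}{\theta}\log S(\theta,h) \le C_\alpha h^2$ uniformly for $\theta\in(0,1/2]$ and small $h$. A second-order estimate $(1+y)^\theta \le 1+\theta y - c\,\theta(1-\theta)y^2$, valid for $|y|\le 1/2$ and $\theta\in(0,1/2]$ with an absolute constant $c>0$, combined with $\bbE[e^h\xi-1] = e^h-1$ and $\bbE[(e^h\xi-1)^2]\to V:=e^{\gl(2\alpha)-2\gl(\alpha)}-1>0$ as $h\to 0$, yields
$$\tfrac{1}{\theta}\log \bbE\!\left[(1+p(e^h\xi-1))^\theta\right] \le p(e^h-1)-c(1-\theta)p^2 V + o(h^2),$$
whose maximum over $p\in[0,1]$ is attained at $p^*\sim h/V$ and is of order $h^2$. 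Combining with the fractional-moment bound then gives the claimed inequality, since $|\gL|=|\cR_N(\kC)|$. The main obstacle is the unboundedness of $\xi$: for atypically large $\go_x$ the value $y=p(e^h\xi-1)$ exits the region $|y|\le 1/2$ where the quadratic bound is available, and one must treat this tail separately using the monotonicity bound $(1+y)^\theta\le 1+y$ (valid for $\theta\le 1$, $y\ge 0$) together with the finiteness of all exponential moments of $\go$ (cf.\ \eqref{allorder}), which ensures that the tail contribution is $o(h^2)$ and does not affect the final bound.
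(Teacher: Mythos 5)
Your overall architecture coincides with the paper's: condition on $\go$ restricted to $\cG_N(\kC)$ so that $\mu^{\kC,\go}_N$ becomes a deterministic prior on $\{0,1\}^{\cR_N(\kC)}$ independent of $(\go_x)_{x\in\cR_N(\kC)}$, apply the fractional-moment analogue of Proposition \ref{th:ub} (the paper simply quotes this as \cite[Lemma 5.6]{cf:GL3}; your induction sketch is indeed the correct proof of it), and reduce everything to the single-site bound $\max_{p\in[0,1]}\tfrac{1}{\theta}\log\bbE\bigl[(1+p(e^h\xi-1))^\theta\bigr]\le C_\alpha h^2$.

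The gap is in that last single-site estimate, in the regime where $p$ is not small. Your quadratic bound $(1+y)^\theta\le 1+\theta y-c\,\theta(1-\theta)y^2$ is invoked only on $\{|y|\le 1/2\}$, and on the complement you fall back on $(1+y)^\theta\le 1+y$ (which moreover holds only for $y\ge 0$; for the negative tail, relevant as soon as $p>1/2$ since $y=p(e^h\xi-1)$ can be as low as $-p$, the correct universal bound is the tangent line $(1+y)^\theta\le 1+\theta y$). Either way, what survives after truncation is $\tfrac1\theta\log\bbE[(1+y)^\theta]\le p(e^h-1)-c(1-\theta)\bbE\bigl[y^2\ind_{\{|y|\le1/2\}}\bigr]$, and the truncated second moment is \emph{not} bounded below by a multiple of $p^2\Var(\xi)$ uniformly in $p\in[0,1]$: if, say, $\xi$ is supported on two values $\xi_-<1<\xi_+$ with $|\xi_\pm-1|\ge 0.9$, then for $p$ near $1$ the indicator vanishes identically and your bound degenerates to $p(e^h-1)\asymp h$, which is not $O(h^2)$. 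The assertion that the maximum over $p\in[0,1]$ is attained at $p^*\sim h/V$ is precisely the missing ingredient and is nowhere justified. The paper closes exactly this point by a different route: it first uses Jensen's inequality, $\bbE[X^\theta]\le\bbE[X^{1/2}]^{2\theta}$ for $\theta\le1/2$, to reduce to the fixed exponent $1/2$, then exploits strict concavity of $p\mapsto\bbE[\sqrt{1+p(e^h\xi-1)}]$ together with a limit-point argument (using $\bbE[\xi]=1$ and Jensen's strict inequality) to prove that the maximizer $p_h\to0$, and only then Taylor-expands with a third-order remainder controlled by \eqref{allorder}. You should either import that argument or supply a separate bound showing the supremum over $p\in[p_0,1]$ is nonpositive for small $h$; as written, the step does not go through.
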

The second task, which is more delicate it to show that each good site yields (at most) a contribution of order $G_{\gb}(\alpha,h)$ to the free energy plus a smaller order correction.

\begin{lemma}\label{different}
There exists $ h_0(\alpha,\beta)$ such that for $h\le h_0(\alpha,\beta)$, for any fixed $\kC\in $ we have
\begin{equation}\label{rantt}
 \frac{1}{\theta}\log \bbE \left[\bE_{N,\gb}\left[ e^{\sum_{x\in  \cG_N(\kC)}(\alpha \go_x-\gl(\alpha)+h)\delta_x} \ | \ \cB_{\kC}\right]^{\theta}
 \right]\le |\cG_N(\kC)|(G_{\gb}(\alpha,h)+h^{2+\gep}).
\end{equation}

\end{lemma}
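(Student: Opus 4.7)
The plan is to factor the conditional partition function into contributions from individual good cells (independent after conditioning) and, in each cell, run a sharp Taylor expansion anchored at the Bernoulli approximation with parameter $p_{n_y}=\theta_1 e^{-4\gb n_y}$, where $n_y$ is the effective boundary height imposed on $\cC_y$ by the enclosing coarse-grained cylinders in $\kC$: concretely, $n_y:=\sum \gep(\gamma) k(\hat\gamma)$ summed over large cylinders whose $\iota$-value equals $1$ on the coarse-grained image of $\cC_y$. Since $\cC_y$ is good, the buffer $\bar\cC_y\setminus\cC_y$ of width $2L$ contains no large contour and small contours have length $<L$, so $\phi\restrict \cC_y$ conditional on $\cB_\kC$ is distributed essentially as $\bP^{n_y}_{\bL_y,\bar\cC_y,\gb}$ for some $\bL_y\supset\{\gamma:|\tilde\gamma|<L\}$, which is precisely the setup of Proposition \ref{rouxrou}.

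\medskip

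\noindent\textbf{Step 1 (factorization).} Refine $\cB_\kC$ by conditioning further on the exact realization of all large contours consistent with $\kC$ and on $\phi\restrict \cR_N(\kC)$. Under this finer conditioning, the SOS Markov-type property (a direct consequence of \eqref{contourdecomp}) makes the restrictions $\phi\restrict\cC_y$ for different good $y$ mutually independent. Averaging back, using independence of $(\go_x)$ across cells and the inequality $(\sum a_i)^\theta\le \sum a_i^\theta$ valid for $\theta\in (0,1)$, we arrive at
\begin{equation}\label{factplan}
\bbE\!\left[\bE_{N,\gb}\!\left[e^{\sum_{x\in\cG_N(\kC)} H_x\delta_x}\,\middle|\,\cB_\kC\right]^\theta\right]\le e^{N^2 h^{2+\gep}}\prod_y \bbE\!\left[\bar Z_y^\theta\right],
\end{equation}
with $H_x:=\alpha\go_x-\gl(\alpha)+h$ and $\bar Z_y(\go):=\bar\bE_y[e^{\sum_{x\in\cC_y}H_x\delta_x}]$, $\bar\bE_y:=\bP^{n_y}_{\bL_y,\bar\cC_y,\gb}$. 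The prefactor absorbs the entropy of refining $\cB_\kC$ to an exact contour realization: by a counting analogous to Lemma \ref{commenfer}, there are at most $\exp\!\big(CL\sum_{(\chi,\iota,q)\in\kC}|\chi|\big)\le \exp(C\sqrt h\,L\,N^2)\le \exp(N^2 h^{2+\gep})$ such refinements, thanks to $\kC\in\mathfrak{D}_N^{\mathrm{ld}}$ and $L=h^{-2\gep}$.

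\medskip

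\noindent\textbf{Step 2 (per-cell bound).} For a single good cell we adapt Lemma \ref{secondlemma}: restrict to $\cD_y:=\{\sum_{x\in\cC_y}\delta_x\le \kappa\}$ with $\kappa$ a large fixed constant (Proposition \ref{rouxrou}(iv) gives $\bar\bP_y[\cD_y^c]\le h^3$), use $(1+t)^\theta\le 1+\theta t+\theta^2 t^2$ valid for $|t|\le 1$ and $\theta\in(0,1)$, and a second-moment computation analogous to \eqref{zrip}--\eqref{zrop} to derive
\begin{equation}\label{pcell}
\frac{1}{\theta}\log \bbE[\bar Z_y^\theta]\le M^2\!\left(p_{n_y} h - \frac{1}{2}p_{n_y}^2\frac{\Var(\xi)}{\bbE[\xi]^2}\right)+ M^2 h^{2+\gep}.
\end{equation}
The decisive input is that every $x\in\cC_y$ lies at distance $\ge 2L\gg |\log h|$ from $\partial\bar\cC_y$, so Proposition \ref{rouxrou}(iii) gives $\bar\bE_y[\delta_x]=p_{n_y}+O(e^{-6\gb n_y})$; this pins the contact density to the \emph{discrete} sequence $(p_n)_{n\in\bbN}$ (unlike Proposition \ref{th:ub} which allows $p$ to range continuously over $[0,1]$). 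Taking the maximum of the leading term in \eqref{pcell} over $n_y\in\bbN$ recovers $G_\gb(\alpha,h)$; combining \eqref{factplan}--\eqref{pcell} over the at most $|\cG_N(\kC)|/M^2$ good cells yields \eqref{rantt}.

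\medskip

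\noindent\textbf{Main obstacle.} The delicate point is the factorization of Step 1. Three error sources must simultaneously be kept below $h^{2+\gep}$ per unit volume: (a) the entropy of refining $\cB_\kC$ into exact contour data, handled by low density and the choice of $L$; (b) small contours straddling the buffer $\bar\cC_y\setminus\cC_y$ that weakly couple neighbouring cells, handled via Harris/FKG applied to the product measure $\bQ_{\gL,\gb}$ of \eqref{defgroq} which factorises exactly spatially; (c) the effect on the local contact density of boundary deformations of $\partial\cC_y$ caused by small contours of $\phi$ in $\cR_N(\kC)$, absorbed thanks to the uniformity in $\bL$ in Proposition \ref{rouxrou}(iii).
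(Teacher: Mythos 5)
Your Step 2 is in the right spirit (it is essentially a fractional-moment variant of Lemma \ref{secondlemma}, anchored at the same heights $n_y$ as the paper's \eqref{defnx}), but Step 1 contains a genuine gap: the factorization of the quenched fractional moment over good cells cannot be obtained the way you propose. To make the cells $\cC_y$ conditionally independent you must condition not only on the exact large contours but also on $\phi\restrict_{\cR_N(\kC)}$ (the buffers $\bar\cC_y\setminus\cC_y$ carry small contours that couple neighbouring cells), and the set of such refinements is infinite, so $(\sum_i a_i)^{\theta}\le\sum_i a_i^{\theta}$ produces an uncontrollable entropy factor. Even the part of the entropy you do count is too large: with $L=h^{-2\gep}$ and $\theta=h^{\gep}$ one has $\frac1\theta\cdot C\sqrt{h}\,L\,N^2= C N^2 h^{\frac12-3\gep}$, which is enormously bigger than the allowed $N^2h^{2+\gep}$, so the claimed bound $\exp(C\sqrt h\,L\,N^2)\le\exp(N^2h^{2+\gep})$ is false. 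Replacing the sum over refinements by a maximum does not help either: the bound $\bE_{N,\gb}[\,\cdot\mid\cB_\kC]\le\max_r\bE_{N,\gb}[\,\cdot\mid r]$ holds for each fixed $\go$, but the maximizing refinement depends on $\go$, and the maximum does not commute with $\bbE[(\cdot)^{\theta}]$; your appeal to Harris/FKG in point (b) cannot repair this because $e^{\sum H_x\delta_x}$ is not monotone in $\phi$.

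The paper's proof is built precisely to sidestep this obstruction: it first applies H\"older's inequality \eqref{kolder} against the reference function $g(\go)=\prod_{x\in\cG_N}(1+p_x(\xi_x-1))^{1/(1-\theta)}$, which integrates out the disorder \emph{before} any spatial decomposition and reduces the problem to the deterministic SOS expectation $\bE_{N,\gb}[e^{\sum_x(\varrho_3(p_x)+h)\delta_x}\mid\cB_\kC]$ plus the explicit per-site quantities $\varrho_1,\varrho_2$ of Lemma \ref{charpesti}. That disorder-free expectation \emph{can} be factorized over good cells by the worst-boundary-condition inequality \eqref{wasq} (an induction via the domain Markov property), with no entropy cost at all; Proposition \ref{rouxrou} then enters exactly as you anticipate, through Lemma \ref{derniezest}. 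A secondary point: you should keep the cutoff $p_x=\min(\theta_1e^{-4\beta n_x},Kh)$ from the paper, since for small $n_y$ the contact density is of order one, your truncation event $\cD_y$ is no longer typical, and the per-cell second-moment computation in \eqref{pcell} breaks down; the paper treats that regime separately using \eqref{root} to show the contribution is negative.
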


The proof of these two lemmas are detailed in Section \ref{prevt} and \ref{prevd} respectively. Let us show how this is sufficient to conclude the proof.
Remembering \eqref{decozh} and applying successively Lemma \ref{techos} and \ref{different} we obtain that 
\begin{multline}
 \frac{1}{\theta} \log \bbE[ Z_{\kC}^{\theta} ]
 \le  C_\alpha h^2 |\cR_N(\kC)|+ |\cG_N(\kC)|(G_{\gb}(\alpha,h)+h^{2+\gep})\\
 \le N^2 (G_{\gb}(\alpha,h)+h^{2+\gep})+  C' N^2  h^{2+2\gep}.
\end{multline}
where in the second line we have used \eqref{boundingtherest} (valid for $\kC\in \mathfrak D^{\mathrm{ld}}_N$).
This concludes the proof.

\qed

\subsection{Proof of Lemma \ref{techos}}\label{prevt}

To prove this inequality, 
we apply a result analogous  to Proposition \ref{th:ub} but valid for fractional moments. The proof goes by induction on the number of sites and is given in full details in \cite{cf:GL3}.

 \begin{lemma}\cite[Lemma.\ 5.6]{cf:GL3}
\label{th:ubfrac}
{\blue
Consider $\gL$ a finite set, $(\go_x)_{x\in \gL}$ a field of IID  positive random variables with probability  distribution denoted $\bbP$ and  
an arbitrary random vector $(\delta_x)_{x\in \gL}$ on $\{0,1\}^{\gL}$  with probability distribution denoted by $\bP_{\gL}$. }
Then we have for any $h>0$,
\begin{equation}\label{woox}
  \log \bbE \left( \bE_{\gL}\left[ e^{\sum_{x\in \gL}(\alpha \go_x-\gl(\alpha))\delta_x} \right]^{\theta}\right)
 \le |\gL|\max_{p\in[0,1]}  \log \bbE \left[ \left(1+p(e^h \xi-1) \right)^{\theta} \right].
\end{equation}
where $\xi:=e^{\alpha \go-\gl(\alpha)}$.
\end{lemma}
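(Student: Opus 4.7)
The plan is to prove \eqref{woox} by induction on $n := |\gL|$, following the standard fractional-moment induction that parallels the proof of Proposition \ref{th:ub}. (I interpret the left-hand side as containing a $+h$ inside the exponent, i.e.\ $e^{\sum_{x\in \gL}(\alpha \go_x-\gl(\alpha)+h)\delta_x}$, matching \eqref{thegeneral} and consistent with how the lemma is used in the proof of Lemma \ref{techos}.) Setting $K := \max_{p\in[0,1]} \bbE[(1 + p(e^h \xi - 1))^\theta]$, the target is $\bbE[\bE_\gL[\cdots]^\theta] \le K^{n}$. The base case $n=1$ is immediate: writing $\gL = \{x\}$ and $p := \bP_\gL(\delta_x = 1)$, one computes
\[
\bE_\gL[e^{(\alpha \go_x-\gl(\alpha)+h)\delta_x}] = 1 + p(e^h \xi_x - 1),
\]
so $\bbE[\cdots^\theta]$ is bounded by $K$ by definition of the maximum.

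For the inductive step I fix $x \in \gL$, set $\gL' := \gL \setminus \{x\}$ and $p := \bP_\gL(\delta_x = 1)$, and condition on $\delta_x$ to introduce
\[
A_i := \bE_\gL\!\left[e^{\sum_{x'\in \gL'}(\alpha \go_{x'}-\gl(\alpha)+h)\delta_{x'}} \,\Big|\, \delta_x = i\right], \quad i \in \{0,1\},
\]
so that $A_0, A_1$ are measurable with respect to $(\go_{x'})_{x'\ne x}$ and
\[
\bE_\gL\!\left[e^{\sum_{y\in\gL}(\alpha \go_y -\gl(\alpha)+h)\delta_y}\right] = (1-p)A_0 + p A_1\, e^{\alpha \go_x - \gl(\alpha) + h}.
\]
The key algebraic move is the factorization
\[
(1-p)A_0 + p A_1 e^{\alpha \go_x - \gl(\alpha)+h} = \bigl[(1-p)A_0 + p A_1\bigr]\cdot\bigl(1 + q(e^{\alpha \go_x - \gl(\alpha) + h} - 1)\bigr),
\]
with $q := p A_1/((1-p)A_0 + p A_1) \in [0,1]$, which isolates the $\go_x$-dependence into the second factor only. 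By independence of $\go_x$ from the remaining disorder, I integrate over $\go_x$ first; since $q$ lies in $[0,1]$ (and is a constant when we condition on $(\go_{x'})_{x'\ne x}$), the definition of $K$ gives
\[
\bbE_{\go_x}\!\bigl[(1 + q(e^h \xi_x - 1))^\theta\bigr] \le K,
\]
hence by Fubini
\[
\bbE\bigl[\bE_\gL[\cdots]^\theta\bigr] \le K\cdot\bbE\bigl[((1-p)A_0 + p A_1)^\theta\bigr].
\]
Finally I observe that $(1-p)A_0 + p A_1$ is precisely $\bE_{\bP'}\!\bigl[e^{\sum_{x'\in \gL'}(\alpha \go_{x'}-\gl(\alpha)+h)\delta_{x'}}\bigr]$, where $\bP'$ is the $\gL'$-marginal of $\bP_\gL$. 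Applying the induction hypothesis to $(\gL',\bP')$ bounds the remaining expectation by $K^{n-1}$, closing the induction.

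The only step that needs any cleverness is the factorization introducing $q$: it is what converts the one-site partition function into a form to which the definition of $K$ applies, and it is also why the right-hand side contains a maximum over $p\in[0,1]$ rather than the specific value $\bP_\gL(\delta_x=1)$. The degenerate cases $p\in\{0,1\}$ reduce immediately to the induction hypothesis on $\gL'$; the denominator $(1-p)A_0 + pA_1$ is always strictly positive since $A_0,A_1>0$ (exponentials of finite sums), so no division-by-zero issue genuinely arises. I do not anticipate any other obstacle: once the factorization is written down, the rest is bookkeeping based on independence of the disorder and the induction hypothesis.
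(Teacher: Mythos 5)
Your proof is correct and follows exactly the approach the paper indicates for this lemma (induction on the number of sites, with the one-site factorization $(1-p)A_0+pA_1 e^{\alpha\go_x-\gl(\alpha)+h}=[(1-p)A_0+pA_1](1+q(e^{\alpha\go_x-\gl(\alpha)+h}-1))$ isolating the disorder at the chosen site); the paper itself defers the details to \cite{cf:GL3}, where the argument is the same. Your reading of the missing $+h$ in the exponent on the left-hand side of \eqref{woox} as a typographical omission is also the correct one.
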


We apply  Lemma  \ref{th:ubfrac} to the distribution
$\mu^{\kC,\go}_N$ with $\gL:= \cR_N$.
This is a random distribution but it is independent of $(\go_x)_{x\in \cR_N}$ which is all that matters.
Then we simply need to show that 
$$\max_{p\in[0,1]} \frac{1}{\theta}\log \bbE \left[ \left(1+p(e^h \xi-1) \right)^{\theta} \right] \le C_\alpha h^2.$$
In order to work with $\theta$ that does not depend on $h$, we notice that by Jensen inequality we have (for $\theta<1/2$)
$$\bbE \left[ \left(1+p(e^h \xi-1) \right)^{\theta} \right]\le \bbE \left[ \sqrt{1+p(e^h \xi-1)} \right]^{2\theta}.$$
Thus to conclude we just need to prove that
\begin{equation}\label{scroutch} \max_{p\in [0,1]}\bbE \left[ \sqrt{1+p(e^h \xi-1)} \right]-1\le C'_{\alpha} h^2.
\end{equation}
 The function $p\mapsto \bbE \left[ \sqrt{1+p(e^h \xi-1)} \right]$ is stricly concave. Let $p_h$ be where the function attains its unique maximum. 
 Observe that $p_h$ tends $ 0$, when $h$ tends to $0$.
 Indeed as $\bbE \left[ \sqrt{1+p_h(e^h \xi-1)} \right]\ge 1$, {\blue any limit point  $p_*$   of the sequence $(p_h)_{h\ge 0}$ at $0$  must satisfy 
  $$\bbE \left[ \sqrt{1+p_*(\xi-1)} \right]\ge 1.$$
On the other hand if one assumes $p_*>0$, since $\bbE[\xi]=1$ and $\xi$ is not a constant, by  Jensen's (strict) inequality we 
have  
$$\bbE \left[ \sqrt{1+p_*(\xi-1)} \right]<  \sqrt{ \bbE \left[1+p_*(\xi-1)\right]}=1,$$
yielding a contradiction.}
Using a Taylor expansion at third order we have 
\begin{multline}
 \bbE \left[ \sqrt{1+p_h(e^h \xi-1)} \right]-1\\
 =\frac{p_h}{2}\bbE[(e^h \xi-1)]-\frac{p_h^2}{8} \bbE \left[ (e^h \xi-1)^2 \right]+ O(p^3_h)\\
 = \frac{p_h h } {2} -\frac{p_h^2}{8} \Var (\xi) +O(p^3_h+ hp_h^2+h^2 p_h),
\end{multline}
where $O(f_h)$ here denotes here  a quantity smaller in absolute value that $C f_h$ uniformly in $h\in [0,1]$.
It then follows that $p_h\sim \frac{2h}{\Var (\xi)}$ and thus that 

\begin{equation}
  \bbE \left[ \sqrt{1+p_h(e^h \xi-1)} \right]-1= \frac{1}{2 \Var (\xi)}h^2 +O(h^3).
\end{equation}
 
\qed

\subsection{Proof of Lemma \ref{different}}\label{prevd}

The starting point of the proof is that if $\kC$ is fixed, then inside good cells $\cC_y$, the distribution of $\phi$ looks like the infinite volume SOS model, with a bulk height $n_y$ {\blue which is fully determined by $\kC$ and $y$.}

\medskip

Then the idea is that if $n_y$ is large, we can (at the cost of a small error term) replace $(\delta_x)_{x\in \cC_y}$ by IID Bernoulli variables with parameter $\theta_1 e^{-4\beta n_y}$ which yields at most a contribution of order $G_{\beta}(\alpha,h)$ per site (the bound being sharp if $n_y$ takes the optimal value). If $n_y$ is small, the system displays a lot of contacts and a rougher argument is sufficient to show {\blue that the corresponding contribution} to the free energy is negative.

\medskip

Let us go more into details and explain how we perform our comparison with the Bernoulli measure.
The starting point is the following application of H\"older's inequality. For any positive function $g$ of the environment we have
\begin{multline}\label{kolder}
 \bbE\left[\bE_{N,\gb}\left[ e^{\sum_{x\in  \cG_N(\kC)}(\alpha \go_x-\gl(\alpha)+h)\delta_x} \ | \ \cB_{\kC}\right]^{\theta}
 \right] \\
 \le \bE_{N,\gb} \left[\bbE\left[g(\go)^{\theta-1} e^{\sum_{x\in  \cG_N(\kC)}(\alpha \go_x-\gl(\alpha)+h)\delta_x} \right] \ | \ \cB_{\kC}\right]^{\theta}\bbE\left[g(\go)^{\theta}\right]^{1-\theta}.
\end{multline}
Hence the l.h.s.\ of \eqref{rantt} which we have to bound is smaller than

\begin{equation}\label{lolder}
 \frac{1-\theta}{\theta}\bbE\left[g(\go)^{\theta}\right]+ 
  \log \bE_{N,\gb} \left[\bbE\left[g(\go)^{\theta-1} e^{\sum_{x\in  \cG_N(\kC)}(\alpha \go_x-\gl(\alpha)+h)\delta_x} \right] \ | \ \cB_{\kC}\right].
\end{equation}

Our idea is to choose $g$ to be equal to (something close to) the partition function associated with the IID Bernoulli fields which appears in our heuristics.
Let us first define the height of each good cell as follows.
Given $x\in \cG_N(\kC)$ we let $z=z(x)$ be such that  $x\in B_z$. We set
\begin{equation}\label{defnx}
 n_x:= \sum_{\{(\chi,\iota,q) \in \kC \ : \  \iota(z)=1  \}} q.
\end{equation}
Then fixing a constant $K$ sufficiently large, we set 
\begin{equation}
 p_x:= \min ( \theta_1 e^{-4\beta n_x}, K h).
\end{equation}
The cutoff for low heights is present for technical reasons, and is placed far from the optimal contact density.
As by definition large contours do not cross good cells, the value of  $n_x$ (and thus of $p_x$) does not vary within a good cell. 
Now we define $g(\go)$ as follows 
\begin{equation}
 g(\go):= \prod_{x\in \cG_N}(1+p_x(\xi_x-1))^{\frac{1}{1-\theta}}.
\end{equation}
It corresponds roughly to the partition function of a pinning system for which the $\delta_x$ are IID Bernoulli variables of parameter $p_x$. The power $\frac{1}{1-\theta}$ and the absence of the term $e^h$ are of no importance, 
the proof would work the same if $(1+p_x(\xi_x-1))^{\frac{1}{1-\theta}}$ was replaced by $(1+p_x(e^h\xi_x-1))$. The reason for this choice of $g$  is that it makes the computation slightly simpler.

\medskip

\noindent Now simpler expressions for both terms in \eqref{lolder} can be given.
Let us set
\begin{equation}
 \begin{split}
  \varrho_1(p,\theta)&:= \frac{1-\theta}{\theta}\log \bbE\left[ (1+p(\xi-1))^{\frac{\theta}{1-\theta}}\right],\\
    \varrho_2(p)&:= \log \bbE\left[ (1+p(\xi-1))^{-1}\right],\\
    \varrho_3(p)&:= \log \bbE\left[ (1+p(\xi-1))^{-1}\xi \right]- \log \bbE\left[ (1+p(\xi-1))^{-1}\right].
\end{split}
\end{equation}
We have 
\begin{equation}
\frac{1-\theta}{\theta} \log \bbE\left[g(\go)^{\theta}\right]= \sum_{x\in \cG_N}   \varrho_1(p_x,\theta)
\end{equation}
Concerning the second term in \eqref{lolder}, we have
\begin{equation}\label{lesrhoz}
\bbE\left[ g(\go) e^{\sum_{x\in  \cG_N(\kC)}(\alpha \go_x-\gl(\alpha)+h)\delta_x} \right]
=e^{\sum_{ x\in \cG_N}
 (\varrho_3(p_x)+h) \delta_x+\varrho_2(p_x)}.
\end{equation}
Hence from \eqref{kolder} we have 

\begin{multline}\label{frouzz}
 \frac{1}{\theta}\log 
 \bbE\left[\bE_{N,\gb}\left[ e^{\sum_{x\in  \cG_N(\kC)}(\alpha \go_x-\gl(\alpha)+h)\delta_x} \ | \ \cB_{\kC}\right]^{\theta}
  \right] \\ \le \sum_{x\in\cG_N}   (\varrho_1(p_x,\theta)+ \varrho_2(p_x)) 
 + \log \bE_{N,\beta}\left[ e^{\sum_{ x\in \cG_N}
 (\varrho_3(p_x)+h) \delta_x}  \ | \ \cB_{\kC}\right]
\end{multline}
In order to conclude we first introduce sharp estimates on $\rho_i$ in terms of $p$ and $\theta$.
 These are obtained via tedious but elementary computations which we postpone to Appendix \ref{appcharpesti}.

 \begin{lemma}\label{charpesti}
 There exists a positive constant $C$ (which is allowed to depend on the distribution of $\go$) such that for all  $p, \theta \in (0,1/2)$
 \begin{equation}
 \begin{split}
  \varrho_1(p,\theta)&\le -\frac{p^2}{2}\Var(\xi)+C( p^3+ \theta p^2),\\
 \varrho_2(p)&\le  p^2\Var(\xi)+ Cp^3,\\ 
 \varrho_3(p)&\le -p \Var(\xi)+C p^2.
   \end{split}
 \end{equation}
\end{lemma}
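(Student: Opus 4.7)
The plan is to derive all three bounds by Taylor expansion in $p$ (and, for $\varrho_1$, additionally in the exponent), using that $\bbE[\xi]=1$ kills the first-order terms and that all polynomial moments of $\xi$ are finite (by \eqref{allorder}). A uniform observation that will be used repeatedly is that for $p\in(0,1/2)$ and $\xi\geq 0$ one has $1+p(\xi-1)\geq 1/2$, so $(1+p(\xi-1))^{-1}\leq 2$ with no need to truncate the potentially unbounded $\xi$.

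For $\varrho_2$, I would apply the exact identity $(1+t)^{-1}=1-t+t^2-t^3(1+t)^{-1}$ with $t=p(\xi-1)$ and take expectation: the linear term vanishes, the quadratic term contributes exactly $p^2\Var(\xi)$, and the remainder is bounded in absolute value by $2p^3\bbE[|\xi-1|^3]\leq Cp^3$. The stated bound on $\varrho_2$ then follows from $\log(1+x)\leq x$. For $\varrho_3$, the same manipulation multiplied by $(\xi-1)$ (using $(1+t)^{-1}(\xi-1)=(\xi-1)-p(\xi-1)^2+p^2(\xi-1)^3(1+t)^{-1}$) gives $\bbE[(1+p(\xi-1))^{-1}(\xi-1)]=-p\Var(\xi)+O(p^2)$. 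Combining with the expansion $\bbE[(1+p(\xi-1))^{-1}]=1+O(p^2)$ already obtained and the elementary inequality $\log((1+a)/(1+b))\leq a-b+O(a^2+b^2)$ for small $a,b$ yields the claim.

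For $\varrho_1$, set $s:=\theta/(1-\theta)$ (so $s\leq 2\theta$ when $\theta\leq 1/2$) and $f(s):=\log\bbE[(1+p(\xi-1))^s]$, so that $\varrho_1(p,\theta)=f(s)/s$. Applying the integral form of Taylor's theorem one gets $f(s)=sf'(0)+\int_0^s(s-u)f''(u)\,du$, with $f'(0)=\bbE[\log(1+p(\xi-1))]$ and $f''(u)=\Var_{\pi_u}[\log(1+p(\xi-1))]$, where $\pi_u$ denotes the tilted measure defined by $d\pi_u\propto (1+p(\xi-1))^u\,d\bbP$. A third-order Taylor expansion of $\log(1+t)$ valid on $\{t\geq -1/2\}$ gives $\log(1+t)\leq t-\tfrac{t^2}{2}+C|t|^3$, and taking expectation against $t=p(\xi-1)$ (using $\bbE[\xi-1]=0$ and $\bbE[|\xi-1|^3]<\infty$) yields $\bbE[\log(1+p(\xi-1))]\leq -\tfrac{p^2}{2}\Var(\xi)+Cp^3$.

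The main technical step, and the only delicate part of the argument, is to establish the uniform bound $f''(u)\leq Cp^2$ for $u\in[0,s]$, which is nontrivial because $\xi$ is unbounded. For this I would use that for $u\in[0,1]$ one has $(1+p(\xi-1))^u\leq 1+p(\xi-1)\leq 1+p\xi$, so all moments of $\xi$ under $\pi_u$ are controlled by those under $\bbP$ divided by the normalizing constant; meanwhile $\bbE[(1+p(\xi-1))^u]\geq \tfrac{1}{2}\bbP(\xi\leq 2)>0$ because on $\{\xi\leq 2\}$ one has $(1+p(\xi-1))^u\geq(1/2)^u\geq 1/2$. Combining this with the pointwise bound $\log^2(1+p(\xi-1))\leq Cp^2(\xi-1)^2$ valid on $\{p|\xi-1|\leq 1/2\}$ and $\log^2(1+p(\xi-1))\leq C\log^2(1+p\xi)$ on the complement (with the latter having negligible contribution thanks to the exponential moments of $\xi$), one gets $f''(u)=O(p^2)$ uniformly in $u$ and $p$. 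Inserted in the Taylor remainder this produces an error $\tfrac{s^2}{2}\cdot Cp^2$, which divided by $s$ gives the term $C\theta p^2$ in the final bound $\varrho_1(p,\theta)\leq -\tfrac{p^2}{2}\Var(\xi)+C(p^3+\theta p^2)$.
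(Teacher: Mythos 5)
Your proposal is correct. For $\varrho_2$ and $\varrho_3$ it follows essentially the paper's route: a third-order (resp.\ second-order) expansion of $(1+t)^{-1}$ valid on $t\ge -1/2$ (which holds here since $p\le 1/2$ and $\xi\ge 0$), the vanishing of the linear term because $\bbE[\xi]=1$, and $\log u\le u-1$; your use of the exact algebraic identity $(1+t)^{-1}=1-t+t^2-t^3(1+t)^{-1}$ with the crude bound $(1+t)^{-1}\le 2$ is a clean way to package the remainder, and your handling of the subtracted term in $\varrho_3$ via $\log\bbE[(1+p(\xi-1))^{-1}]\ge 0$ (Jensen) is fine. For $\varrho_1$, however, you take a genuinely different path. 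The paper stays entirely pointwise: it applies the inequality $(1+x)^{\gamma}\le 1+\gamma x-\tfrac{\gamma(1-\gamma)}{2}x^2+4\gamma|x|^3$ (valid for $x\ge -1/2$, $\gamma\in(0,1)$) under the expectation with $\gamma=\theta/(1-\theta)$, then uses $\log u\le u-1$ and multiplies by $\tfrac{1-\theta}{\theta}$; the $\theta p^2$ error appears simply because $\tfrac{\gamma(1-\gamma)}{2}\cdot\tfrac{1}{\gamma}=\tfrac12(1-\tfrac{\theta}{1-\theta})$ rather than $\tfrac12$. Your route through the cumulant generating function $f(s)=\log\bbE[(1+p(\xi-1))^{s}]$ and its Taylor expansion at $s=0$ is structurally more informative (it isolates $\bbE[\log(1+p(\xi-1))]$ as the leading term and the tilted variance as the correction), but it forces you to prove the uniform bound $f''(u)\le Cp^2$ for an unbounded $\xi$ — precisely the delicate step you identify — whereas the paper's pointwise inequality sidesteps this entirely. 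Your treatment of that step (lower-bounding the normalizer by $\tfrac12\bbP(\xi\le 2)\ge\tfrac14$, bounding the tilting density by $1+p\xi$ for $u\le 1$, and splitting according to whether $p|\xi-1|\le 1/2$, with the tail controlled by the polynomial moments of $\xi$ coming from \eqref{allorder}) is sound, so both arguments deliver the lemma; the paper's is just shorter.
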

\noindent Now the second ingredient we require is something that could justify an  approximation of the type
$$\bE_{N,\beta}\left[ e^{\sum_{ x\in \cG_N}
 (\varrho_3(p_x)+h) \delta_x}  \ | \ \cB_{\kC}\right] \approx  \prod_{x\in \cG_N} (1+p_x (e^{(\varrho_3(p_x)+h)}-1)),$$
obtained by replacing $\delta_x$ by independent Bernoulli variables of parameter $p_x$.
 Our first step is to factorize the expectation on the l.h.s.\ in order to obtain a product over good cells. It is not possible to obtain an equality since the realization of $\phi$ in the different cells are not independent. However, we can obtain an upper bound by taking for each cell the worse boundary condition.
 
 \medskip
 
Given $\varphi \in \cB_{\kC}$ (a realization of the field for which the set of coarse grained contour is given by $\kC$) and $y$ such that $\cC_y$ is a good cell,  we let $\bP^{\varphi}_y$ denote distribution of $\phi$ conditioned to coincide with $\varphi$ outside of the box $\bar \cC_y$.
More precisely we set 
\begin{equation}
 \bE^{\varphi}_y[f(\phi)]:= 
\bE_{N,\gb}\left[ f(\phi) \ | \ \cB_{\kC}\cap \left\{\phi \restrict_{\bar \cC^{\cc}_y}=\varphi \restrict_{\bar \cC^{\cc}_y}  \right\} \right].
\end{equation}
An immediate induction on $y$ yields, by the domain Markov property 
\begin{equation}\label{wasq}
   \bE_{N,\gb}\left[ e^{\sum_{x\in  \cG_N}[\varrho_3(p_x)+h ]\delta_x} \ | \ \cB_{\kC} \right]\le \prod_{\{ \ y \ : \ \cC_y \text{ is good } \} } \max_{\varphi \in  \cB_{\kC} }
   \bE^{\varphi}_y\left[ e^{\sum_{x\in  \cC_y}[\varrho_3(p_x)+h ]\delta_x} \right]
   \end{equation}
If $h$ is sufficiently small (recall that by definition $p_x\ge c h^{1/3}$, and we assumed $\gep\le 1/20$) we have
$$\varrho_3(p_x)+h\le M^{-2}/2= h^{8\gep}/2.$$ We can thus use the inequality $e^u\le 1+u+u^2$, valid for $u\le 1/2$ and deduce (with a small abuse of notation we write $p_y$ for the value assumed by $p_x$ on  
$\cC_y$) that 
\begin{multline}\label{wqas}
 \log \bE^{\varphi}_y\left[ e^{\sum_{x\in  \cC_y}[\varrho_3(p_y)+h ]\delta_x} \right]
 \le \bE^{\varphi}_y\left[ e^{\sum_{x\in  \cC_y}[\varrho_3(p_y)+h ]\delta_x} \right]-1 \\
 \le\left( [\varrho_3(p_y)+h ]+ M^2[\varrho_3(p_y)+h ]^2 \right)\sum_{x\in  \cC_y}\bE^{\varphi}_y[\delta_x],
\end{multline}
where we used that $\bE^{\varphi}_y[(\sum_{x\in  \cC_y}\delta_x)^2]\le M^2 \sum_{x\in  \cC_y}\bE^{\varphi}_y[\delta_x].$
Back to \eqref{frouzz} combining it with \eqref{wasq}-\eqref{wqas} we obtain that 
\begin{multline}
  \frac{1}{\theta}\log 
 \bbE\left[\bE_{N,\gb}\left[ e^{\sum_{x\in  \cG_N(\kC)}(\alpha \go_x-\gl(\alpha)+h)\delta_x} \ | \ \cB_{\kC}\right]^{\theta}\right]\\ \le 
 \sum_{x\in\cG_N}   (\varrho_1(p_x,\theta)+ \varrho_2(p_x))+\left([\varrho_3(p_x)+h ]+ M^2[\varrho_3(p_x)+h ]^2 \right)\max_{\varphi\in \cB_{\kC}}\bE^{\varphi}_z[\delta_x],
\end{multline}
{\blue where in the above equation we  use the convention announced above \eqref{defnx} that $z=z(x)\in \bbZ^d$  is such that $x\in \cC_z$.}
To conclude it is sufficient to show that for every $x\in \cG_N$, we have 
\begin{multline}\label{presquefini}
 \varrho_1(p_x,\theta)+ \varrho_2(p_x)+\left(\varrho_3(p_x)+h + M^2[\varrho_3(p_x)+h ]^2 \right)\max_{\varphi\in \cB_{\kC}} \bE^{\varphi}_z[\delta_x]\\ \le G_{\beta}(\alpha,h)+h^{2+\gep}.
\end{multline}
We then use an estimate for $\bE^{\varphi}_z[\delta_x]$ that can be obtained as direct consequences of Proposition \ref{rouxrou}  {\blue (recall the definition of $n_x$ \eqref{defnx})}. The proof is postponed to the next subsection.

\begin{lemma}\label{derniezest}
If $x\in \cG_N(\kC)\cap \cC_y$, we have for every realization of $\varphi\in \cB_{\kC}$
\begin{equation}\label{croot}
\left|\bE^{\varphi}_y[\delta_x]- \theta_1 e^{-4\gb n_x}\right|\le C e^{-6\gb n_x}+ e^{-c\sqrt{L}}.
\end{equation}
Furthermore we have 
\begin{equation}\label{root}
\bE^{\varphi}_y[\delta_x]\ge C^{-1} e^{-4\gb n_x}.
\end{equation}

\end{lemma}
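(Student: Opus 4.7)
The proof plans to combine the domain Markov property with the sharp peak estimate of Proposition~\ref{rouxrou}(iii). First, conditioning on $\phi\restrict_{\bar\cC_y^{\cc}}=\varphi\restrict_{\bar\cC_y^{\cc}}$ and on $\cB_\kC$ realizes, inside $\bar\cC_y$, an SOS field with boundary condition $\varphi\restrict_{\partial\bar\cC_y}$ subject to the restriction that no contour of length at least $L$ appears (otherwise, such a contour would contribute a coarse grained cylinder inside $\bar\cC_y$, contradicting that $\cC_y$ is a good cell). Since $\cC_y$ is good, every contour of $\varphi$ touching $\partial\bar\cC_y$ already has length smaller than $L$, and $\varphi$ is identically equal to $n_x$ on the complement of the union of those short boundary contours.

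Next I would compare this conditional measure with the clean reference measure $\bP^{n_x}_{\bL,\bar\cC_y,\gb}$, where $\bL$ is the set of contours in $\bar\cC_y$ of length strictly less than $L$. Because $x\in\cC_y$ lies at $\ell_1$-distance at least $2L$ from $\partial\bar\cC_y$, the small contours of $\varphi$ sitting on the boundary are localized far from $x$; the resulting discrepancy between $\bE^\varphi_y[\delta_x]$ and $\bP^{n_x}_{\bL,\bar\cC_y,\gb}[\phi(x)=0]$ is controlled by the exponential decay of correlations from Theorem~\ref{infinitevol}, yielding an error of order $e^{-c\gb L}$, which is negligible compared to $e^{-c\sqrt{L}}$.

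For the reference measure, the shift $\phi\mapsto\phi-n_x$ reduces the computation to boundary condition zero, and Proposition~\ref{rouxrou}(iii) applies. Recalling \eqref{deffdiss}, $\cC\setminus\bL$ consists of all contours of length at least $L$, and the minimum of $\max_{z\in\bar\gamma}|x-z|$ over such contours is attained by a roughly square forbidden contour of side length $L/4$ centered at $x$, giving $d(x,\bL^{\cc})\ge c\sqrt{L}$. Using \eqref{zup3s} at levels $n_x$ and $n_x+1$ and subtracting then yields
\[
\Bigl|\bP^{n_x}_{\bL,\bar\cC_y,\gb}[\phi(x)=0]-\theta_1 e^{-4\gb n_x}\Bigr|\le C\bigl(e^{-6\gb n_x}+e^{-c\sqrt{L}}\bigr),
\]
which combined with the comparison step produces \eqref{croot}. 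The lower bound \eqref{root} is immediate from \eqref{czups}: the positive length-four contour surrounding $x$ is short and hence in $\bL$, so the clean measure already satisfies $\bP^{n_x}_{\bL,\bar\cC_y,\gb}[\phi(x)=0]\ge C^{-1}e^{-4\gb n_x}$; the boundary perturbation only multiplies this by a factor close to $1$ by the same decay estimate.

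The main technical point to handle carefully is the comparison between $\bE^\varphi_y[\delta_x]$ and the clean measure's pinning probability: one must quantify how the small contours decorating $\varphi$ on $\partial\bar\cC_y$, being at distance at least $2L$ from $x$, contribute only a negligible correction. This will rely on the same cluster expansion machinery underpinning Theorem~\ref{infinitevol} and Proposition~\ref{rouxrou}(iii), applied here to a localized boundary perturbation rather than to the intrinsic fluctuations of the bulk field.
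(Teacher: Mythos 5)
Your overall target is the right one --- reduce to Proposition~\ref{rouxrou} applied to a restricted SOS measure with flat boundary condition $n_x$, with $d(x,\bL^{\cc})\gtrsim\sqrt{L}$ coming from the exclusion of contours of length $\ge L$ --- but the route you take to reach a flat boundary condition has a genuine gap. Conditioning on $\phi\restrict_{\bar\cC_y^{\cc}}=\varphi\restrict_{\bar\cC_y^{\cc}}$ leaves you with an SOS field in $\bar\cC_y$ whose boundary condition is the (generally non-constant) restriction of $\varphi$, decorated by the small contours that cross $\partial\bar\cC_y$. Your plan then hinges on comparing this measure to the clean reference $\bP^{n_x}_{\bL,\bar\cC_y,\gb}$ with an error $e^{-c\gb L}$, citing Theorem~\ref{infinitevol}. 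That theorem, however, only compares finite-volume measures with \emph{constant} boundary condition to the infinite-volume limit; it says nothing about a non-constant boundary condition, let alone one combined with a contour restriction. The comparison you defer to ``the same cluster expansion machinery'' is precisely the nontrivial step, and none of the results you are allowed to quote (Theorem~\ref{infinitevol}, Proposition~\ref{rouxrou}) delivers it. The paper sidesteps this entirely: it conditions on the \emph{stronger} information $\hat\Upsilon^{(y)}(\phi)=\hat\Gamma$ (the full set of cylinders whose contours meet $(\bar\cC_y)^{\cc}$), removes the interiors of the crossing contours from the domain, and observes that the conditional law on $\gL(y,\hat\Gamma)$ is then \emph{exactly} of the form $\bP^{n_x}_{\bL,\gL,\gb}$ with $\bL=\bL(y,\hat\Gamma)$. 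Since Proposition~\ref{rouxrou} is uniform over all choices of $\gL$ and $\bL$, no comparison with a reference measure is needed at all. If you want to keep your decomposition, you must either prove the boundary-perturbation estimate yourself or restructure the conditioning as the paper does.

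Two smaller points. First, your identification of the minimizer in $d(x,\bL^{\cc})$ is off: a square contour of side $L/4$ centered at $x$ has $\max_{z\in\bar\gamma}|x-z|\asymp L$, not $\sqrt{L}$; the order $\sqrt{L}$ is achieved by space-filling contours of length $\ge L$ whose interior fits inside a ball of radius $\asymp\sqrt{L}$ (since a contour of length $L$ encloses $\gtrsim L$ sites, any ball containing its interior has radius $\gtrsim\sqrt{L}$). The stated bound $d(x,\bL^{\cc})\ge c\sqrt{L}$ is correct, but for this reason. Second, your derivation of \eqref{croot} by applying \eqref{zup3s} at levels $n_x$ and $n_x+1$ and subtracting is fine, and the lower bound via \eqref{czups} is exactly the paper's argument once the measure has been put in the form $\bP^{n_x}_{\bL,\gL,\gb}$.
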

Now to prove \eqref{presquefini}, let us first apply the estimates from Lemma \ref{charpesti}. For simplicity we write $p$ for $p_x$. We obtain
 \begin{multline}\label{presquefini2}
 \varrho_1(p,\theta)+ \varrho_2(p)+\left(\varrho_3(p)+h + M^2[\varrho_3(p)+h ]^2 \right)\max_{\varphi\in \cB_{\kC}} \bE^{\varphi}_z[\delta_x]\\
 \le \frac{p^2}{2}\Var(\xi)+(h-p \Var (\xi))\bE^{\varphi}_z[\delta_x]
 + C \left[p^3+\theta p^2+ M^2(h^2+p^2) \bE^{\varphi}_z[\delta_x]\right].
\end{multline}
Now if $\theta_1 e^{-4\beta n}\ge K h$ (and thus $p=Kh$), then  \eqref{root} in Lemma \ref{derniezest} yields  $\bE^{\varphi}_z[\delta_x]\ge C^{-1}Kh $. This is sufficient to ensure that the r.h.s.\ in \eqref{presquefini2} is negative if $K$ is chosen large enough.

\medskip

If $\theta_1 e^{-4\beta n}\le K h$   (and thus $p=\theta_1 e^{-4\beta n}$) then using \eqref{croot} (note here that the correction $e^{-c \sqrt{L}}$ is much smaller than any power of $h$ and thus irrelevant) we obtain that the r.h.s.\ in \eqref{presquefini2} is smaller than (changing the value of the constant $C$ if necessary)
\begin{multline}\label{lazzt}
 h \theta_1 e^{-4\beta n}-\frac{1}{2}\Var (\xi)\theta^2_1 e^{-8\beta n} \\ + C( he^{-6 \beta n}+ e^{-10\beta n}+\theta e^{-8\beta n} +M^2 h^2 e^{-4\beta n}+ M^2 e^{-12 \beta n}+h^{3}).
\end{multline}
The first line in \eqref{lazzt} is clearly smaller than $G_\beta(\alpha,h)$ while
the second line is smaller than $C 'h^{2+\gep}$ which allows to conclude.

\subsection{Proof of Lemma \ref{derniezest}}\label{ccc}

 We obtain the result by considering a stronger conditioning than  
 the realization of $\phi$ outside $\bar \cC_y$.
 We let 
 $\hat \Upsilon^{(y)}(\phi)$ denote the set of cylinders in $\phi$ which intersect $(\bar \cC_y)^{\cc}$, or more precisely
 \begin{equation}
  \hat \Upsilon^{(y)}(\phi):= \{ \hat \gamma \in \Upsilon^{(y)}(\phi), 
\ : \   \tilde \gamma \cap (\bar \cC_y)^{\cc} \ne \emptyset \},
 \end{equation}
where, like in \eqref{cgtrace}, we have identified the geometric contour $\tilde \gamma$ with a subset of $\bbR^2$ (a union of segments).
It is quite simple to check via \eqref{cylinder} that the knowledge of 
  $\hat \Upsilon^{(y)}(\phi)$ is sufficient to reconstruct $\phi \restrict_{\gL_N \setminus \bar \cC_y}$.

 \medskip

 Given $\hat \Gamma$ a collection of cylinders which is such that 
 $\bP\left[ \cB_{\kC} \ | \ \hat \Upsilon^{(y)}(\phi)=\hat \Gamma  \right]>0$ (that is, the coarse grained cylinders corresponding to 
  $\hat \Gamma$ are exactly those in $\kC$), we are going to prove \eqref{croot} and \eqref{root} are valid with $\bP^{\varphi}_y$ replaced by
  \begin{equation}\label{newcondition}
  \bP^{\hat \Gamma}_{y}[\cdot]:= \bP\left[ \cdot \ | \ \cB_{\kC} \cap \{\hat \Upsilon^{(y)}(\phi)=\hat \Gamma \} \right].
 \end{equation}
We want to consider $\bP^{\hat \Gamma}_{y}$, or rather, its restriction to a domain $\gL$ as a measure of the form 
$\bP^n_{\bL,\gL,\beta}$.
 We let $\gL(y, \hat \Gamma)$ denote the subset of $\bar \cC_y\cap \bbZ^2$ obtained by subtracting the interior of contours which intersects the box $\bar \cC_y$
 $$\gL(y, \hat \Gamma):= (\bbZ^2 \cap \bar \cC_y) \setminus  \left( \bigcup_{\{\gamma \in \hat \Gamma \ : \  \tilde \gamma \cap \bar \cC_y\ne \emptyset \}}\bar \gamma \right)$$
 We define similarly $\bL(y,\hat \Gamma)$ to be the set of contours which can appear in  $\Upsilon(\phi) \setminus \Upsilon^{(y)}(\phi)$ when $\Upsilon^{(y)}(\phi)=\gG$. Recall that by convention, $\Upsilon^{(y)}(\phi)$ and $\gG$ correspond to the image of $\hat \Upsilon^{(y)}(\phi)$ and 
 $\hat \gG$ for the canonical projection on the set of cylinders to the set of contours.
 $$\bL(y,\hat \Gamma):= \{ \gamma \in \cC_{\gL} \ : \ |\tilde \gamma| <L \text{ and }  \gamma\cup  \gG \text{ is compatible } \}.$$
 With these definitions, the reader can check that 
 $$\bP^{\hat \Gamma}_{y}[ \phi  \restrict_{ \gL} \in \cdot]= \bP^n_{\gL, \bL, \gb}$$ 
 with $\gL=\gL(y, \hat \Gamma)$ and $\bL=\bL(y,\hat \Gamma)$ defined as above and $n=n_x$.
 As by definition $d(x, \gL^{\cc})\ge L/2$, and thus $d(x,\bL^{\cc})$ (recall \ref{deffdiss})
 is mostly determined by the restriction on the contour length.  We have 
 in particular $d(x,\bL^{\cc})\ge \sqrt{L}/2. $ Hence both inequalities can be deduced from Proposition \ref{rouxrou}.
 \qed

 \medskip

\noindent {\bf Acknowledgements:} The author is expresses his thank to an anonymous referee for a veru detailed reports which helped to improve the quality of the manuscript. He acknowledges funding from a productivity grant from CNPq  
and Young Scientist grant from FAPERJ. 

\appendix

\section{Basic properties for the free energy}
\subsection{Proof of Proposition \ref{freeen}}\label{appintro}
Assuming \eqref{convergence} the statements about convexity and monotonicity of $h\mapsto \tf(\beta,\alpha,h)$ easily follow.
Indeed $\cZ^{n,h,\alpha,\go}_{N,\gb}$ is $\log$-convex in $h$ (that is a function whose $\log$ is a convex function) as a 
sum of $\log$-convex functions $h \mapsto e^{-\gb\cH^{n}_\gL(\phi)+ \sum_{x\in \gL}(\alpha \go- \gl(\alpha)-h)\delta_x}$.
It is also increasing.  
These properties are conserved in the limit so that $\tf(\gb,\alpha,h)$ (provided it exists) is non-decreasing and convex. The identity \eqref{contact} just comes from the fact that 
$$ \partial_h \log \cZ^{n,h,\alpha,\go}_{N,\beta} =\bE^{n,h,\alpha,\go}_{N,\gb}\left[\sum_{x\in \gL_N} \delta_x\right], $$
and convexity allows to interchange limit and derivatives (provided that the limit exists).
 The upper-bound \eqref{annealed} is the classical annealed bound is obtain after passing to the limit in the following inequality
\begin{equation}
\bbE\left[ \log  \cZ^{n,h,\alpha,\go}_{N,\gb} \right]\le 
 \log \bbE\left[ \cZ^{n,h,\alpha,\go}_{N,\gb} \right]=  \log \cZ^{n,h}_{N,\gb},
\end{equation}
where the right hand side denotes the partition function corresponding to $\alpha=0$ (in this case the dependence in $\go$ vanishes).
For the lower-bound, it is sufficient to observe that 

 $$\bbE \left[\log  \cZ^{n,u+\lambda(\beta),\alpha,\go}_{N,\gb}\right]= \bbE\left[ \log \left(
  \sum_{\phi \in \gO_{\gL}}e^{-\gb\cH^{n}_\gL(\phi)+ \sum_{x\in \gL}(\alpha \go +u)\delta_x}\right)\right].$$
is convex in $\alpha$  and minimized when $\alpha=0$ (when all other parameters are fixed). Taking $u= h-\lambda(\beta)$ we obtain  
$$ \bbE \left[\log  \cZ^{n,h,\alpha,\go}_{N,\gb}\right]
\ge \log  \cZ^{n,h-\lambda(\beta)}_{N,\gb}.$$
\medskip

\noindent  Let us thus prove \eqref{convergence}.
A direct consequence of the triangle inequality is that we have for any $\phi \in \gO_{N}$, 
$$|\cH^n_N(\phi)-\cH^m_N(\phi)|\le  4(n-m)N,$$
and thus 
$$|\log  \cZ^{n,h,\alpha,\go}_{N,\gb}-\log  \cZ^{m,h,\alpha,\go}_{N,\gb}|\le 4(n-m)N $$
which implies that  should the limit exist, it does not depend on $n$.
Concerning the existence of the limit in \eqref{convergence},
using the same super-additivity argument as in \eqref{hicup2}, we have, for disjoint $\gL^{(1)}$ and $\gL^{(2)}$ 
\begin{equation}
  \bbE \left[ \log \cZ^{n,h,\alpha,\go}_{\gL^{(1)}\cup \gL^{(2)} ,\gb} \right] \ge   \bbE \left[ \log 
 \cZ^{n,h,\alpha,\go}_{\gL^{(1)} ,\gb}\right] + \bbE\left[  \log \cZ^{n,h,\alpha,\go}_{\gL^{(2)} ,\gb}\right].
\end{equation}
The quantity   $\bbE \left[ \log \cZ^{n,h,\alpha,\go}_{\gL,\gb}\right],$ being invariant under lattice translation (of $\gL$), this is sufficient (e.g.\ following the step of \cite[Exercise 3.3]{cf:Vel}) to entail the convergence of 
$N^{-d}\bbE \left[ \log \cZ^{n,h,\alpha,\go}_{N,\beta}\right].$
To obtain the convergence of the $\log$-partition function without expectation, we need a concentration result. We use a method similar to the one displayed in \cite{cf:CSY} (more specifically for the proof of Proposition 2.5) combining truncation and Azuma's inequality, to prove that there exists 
a positive constant $c$ (depending on $\alpha$ and on the distribution of $\go$) such that for every $u>0$ and $\gL$.
\begin{equation}\label{zuzum}
   \bbP\left[ |\log  \cZ^{n,h,\alpha,\go}_{\gL,\gb}- \bbE\left[ \log  \cZ^{n,h,\alpha,\go}_{\gL,\gb} \right]| \ge \sqrt{|\gL|} u \right] \le 
    3|\gL| \exp(- c u^{2/3}). 
\end{equation}
Let us first work under the assumption that there exists $K<\infty$ such that $\bbP[ |\go_x|\le K]=1$.
Then, considering  $(x_1,\dots,x_{|\gL|})$  an enumeration of $\gL$ we consider the martingale
\begin{equation}
 M_i:= \bbE \left[ \log  \cZ^{n,h,\alpha,\go}_{\gL,\gb} \ | \ (\go_{x_j})_{j\ge i+1}  \right]
\end{equation}
which satisfies, $ |M_{i}-M_{i-1}|\le 2\alpha K$ almost surely for every $i\le |\gL|$.
Thus by Azuma's inequality \cite{cf:A} we have 
\begin{equation}
 \bbP\left[ |M_{|\gL|}-\bbE\left[M_{|\gL|}\right]|\ge  \sqrt{|\gL|}u \right] 
\le 2e^{-\frac{u^2}{8\alpha^2 K^2}}
 \end{equation}
which is stronger than \eqref{zuzum}.
Now, under the weaker assumption \eqref{allorder}
we can use the above for  $\tilde M_i:= \bbE \left[ \log  \cZ^{n,h,\alpha,\tilde \go}_{\gL,\gb} \ | \ (\go_{x_j})_{j\le i}  \right]$ 
where 
$$\tilde \go_x= \ind_{\{|\go_x|\le u^{2/3}\}}\go_x.$$
Using Azuma's inequality for the martingale $\tilde M$ whose increments are smaller than $2 \alpha u^{2/3}$ we obtain that for an appropriate positive constant $c$
\begin{equation}\label{zuzum3}
   \bbP\left[ |\log  \cZ^{n,h,\alpha,\tilde \go}_{\gL,\gb}- \bbE\left[ \log  \cZ^{n,h,\alpha,\tilde \go}_{\gL,\gb} \right]| \ge \sqrt{|\gL|} u/2 \right] \le 
   2 \exp(- c u^{2/3}). 
\end{equation}
 Now we observe that the existence of exponential moments \eqref{allorder} and a union bound implies that $\bbP[ (\go_x)_{x\in \gL} \ne (\tilde \go_x)_{x\in \gL} ]\le 2|\gL|e^{-c u^{2/3}}$ (also for some adequate value of $c$), we obtain thus 
 \begin{equation}\label{zuzum2}
   \bbP\left[ |\log  \cZ^{n,h,\alpha,\go}_{\gL,\gb}- \bbE\left[ \log  \cZ^{n,h,\alpha,\tilde \go}_{\gL,\gb} \right]| \le  \sqrt{|\gL|} u/2 \right] \le 
   2 (|\gL|+1) \exp(- c u^{2/3}). 
\end{equation}
Then we observe that \eqref{zuzum2} implies that 
\begin{multline}\label{diffe}
 \left| \bbE\left[ \log  \cZ^{n,h,\alpha,\go}_{\gL,\gb}\right]- \bbE\left[ \log  \cZ^{n,h,\alpha,\tilde \go}_{\gL,\gb} \right]\right|
 \\ \le   \bbE\left[ \left|\log  \cZ^{n,h,\alpha,\go}_{\gL,\gb}- \bbE\left[ \log  \cZ^{n,h,\alpha,\tilde \go}_{\gL,\gb} \right] \right|\right]
 \le C \sqrt{|\gL|} (\log |\gL|)^{3/2}.
\end{multline}
The reader can then check that after an appropriate change of the constant $c$, \eqref{zuzum} is obtained as a combination of \eqref{zuzum2} and \eqref{diffe}.
Let us finally prove \eqref{labound}. Observe that  we have
\begin{multline}
\frac{\cZ^{n,h,\alpha,\go}_{N,\beta}}{\cZ_{N,\beta}}=\bE^n_{N,\gb}\left[ e^{\sum_{x\in \gL_N}(\alpha \go- \gl(\alpha)+h)\delta_x}\right] 
\ge \bP^n_{N,\beta}[\forall x\in \gL_N, \ \phi(x)\ge  1 ]\\
=  \bP_{N,\beta}[\forall x\in \gL_N, \ \phi(x)\ge  1-n].
\end{multline}
Hence to conclude one must show that for any $\gep$, we can find $n$ such that for $N$ sufficiently large we have
\begin{equation}
 \bP_{N,\beta}[\forall x\in \gL_N, \ \phi(x)\ge  1-n ]\ge e^{-\gep N^d}.
\end{equation}
Let $N_0$ be such that 
$$ \frac{1}{N^d_0}\log \cZ_{N_0,\beta} \ge \tf(\beta)-(\gep/2) \quad \text{ and }  \quad N_0 \ge \frac{d \tf(\beta)}{\gep} $$
Note that we only need to prove this for a sequence of values of $N$ going to infinity so we might assume that $N$ is of the form $kN_0+k-1$ for some $k\ge 1$.
We consider $\bbH(N,N_0)$ to be a ``grid'' which splits $\gL_N$ into cells of side-length $N_0$,  that is the set of vertex in $\gL_N$ for which at least one coordinate is a multiple of $N_0+1$.
\begin{equation}
 \bbH(N,N_0):= \{ (x_1,\dots,x_d)\in \gL_N \ : \ \exists i\in \lint 1,d\rint ,   x_i/(N_0+1)\in \bbZ \}
\end{equation}
Now we can bound from below the probability we want to estimate by
\begin{multline}
    \bP_{N,\beta}\left[ \{ \forall x\in \gL_N, \ \phi(x)\ge  1-n\ \} \cap \{ \forall x\in \bbH, \phi(x)=0 \}  \right]\\ = \bP_{N,\beta}[ \forall x\in \gL_N, \ \phi(x)\ge  1-n \ | \  \forall x\in \bbH, \phi(x)=0 ] \bP_{N,\beta}[  \forall x\in \bbH, \phi(x)=0 ],
\end{multline}
and estimate separately each term of the product.
Using the spatial Markov property for the SOS model, we have
\begin{multline}
 \bP_{N,\beta}[ \forall x\in \gL_N, \ \phi(x)\ge  1-n \ | \  \forall x\in \bbH, \phi(x)=0 ]\\
 = \bP_{N_0,\beta}\left[ \forall x\in \gL_{N_0}, \ \phi(x)\ge  1-n\right]^{k^d} \ge e^{- N^d (\gep/3)}.
\end{multline}
where the last inequality can be made valid by choosing $n$ sufficiently large. Now just from the definition of the partition functions we have
(recall that $\cZ_{N,\beta}\le e^{N^d \tf(\beta)}$ for every $N$)
\begin{equation}
 \bP_{N,\beta}[  \forall x\in \bbH, \phi(x)=0 ] = \frac{(\cZ_{N_0,\beta})^{k^d}}{\cZ_{N,\beta}} \ge \exp\left( k^d N^d_0 (\tf(\beta)-(\gep/2))- N^d \tf(\beta)  \right) . 
 \end{equation}
 The quantity in the exponential is larger than $-\gep N^d$.

\qed

\subsection{Proof of Proposition \ref{firstorder}}
We have 
$$\bar \tf_{\gb}(h)=\lim_{N\to \infty}\frac{1}{N^d}\log \bE_{N,\gb}\left[ e^{h\sum_{x\in \lint 1, N\rint^d} \delta_x}\right].$$
As a consequence of Jensen's inequality, when $h\ge 0$, the r.h.s.\ is larger than (the existence of the limit and its value are consequences of \eqref{decayz}) 
\begin{equation}
h \lim_{N\to \infty}\frac{1}{N^d}\bE_{N,\gb}\left[ \sum_{x\in \lint 1, N\rint^d} \delta_x\right]=h \bP_{\beta}(\phi({\bf 0})=0).
\end{equation}
When $h\le 0$ we have $\bE_{N,\gb}\left[ e^{h\sum_{x\in \lint 1, N\rint^d} \delta_x}\right]\le 1$ which implies   $\bar \tf_{\gb}(h)\le 0$. On the other hand as a consequence of \eqref{labound}) we have  $\bar \tf_{\gb}(h)\ge 0$ and hence  $\bar \tf_{\gb}(h)=0$ for $h\le 0$.
The result follows by convexity for some constant $c_{\beta}$ satisfying $\bP_{\beta}(\phi({\bf 0})=0)\le c_{\beta}< 1$.
\qed 
\section{Proof of Proposition \ref{rouxrou}}\label{lazt}
Except for \eqref{lotzof},  the estimates we need to prove are very similar to the ones found in \cite{cf:part1} and \cite{cf:HL}, with only minor modifications needed in the proof. We include the details for  \eqref{czups}, \eqref{zup2s} and \eqref{zup3s} for the sake of completeness.

\subsection{Proof of \eqref{czups}}

A proof using cluster expansion is given in \cite[Proposition 3.10]{cf:HL} (for a constant $C=2$). Let us present a simpler proof which only use a variant of Peierls argument.

\medskip

Recall that $\gamma_x$ denote the positive contour of length $4$ such that $\bar \gamma_x=\{x\}$. One possibility to have $\phi(x)=n$ is that $\gamma_x$ is the only contour in $\Upsilon (\phi)$ surrounding $x$ and it has intensity $n$.
Set 
$$\bL^{0}_x:=\{ \gamma \in \bL\setminus \{\gamma_x\} \ : \ \gamma \text{ is incompatible with } \gamma_x \text{ or } x\in \bar \gamma \}.$$
We have
\begin{multline}
 \bP_{\bL,\gL,\beta}[ \phi(x)= n]\ge  \bP_{\bL,\gL,\beta}[ \gamma_x\in \Upsilon(\phi)\ ;  \ k(\gamma_x)=n \  ;  \Upsilon(\phi) \cap \bL^{0}_x=\emptyset \ ]\\
 = \bP_{\bL,\gL,\beta}\left[ \gamma_x\in \Upsilon(\phi)\ ;  \ k(\gamma_x)=n \   \ | \   \Upsilon(\phi) \cap \bL^{0}_x=\emptyset \ \right]\bP_{\bL,\gL,\beta} \left[\Upsilon(\phi) \cap \bL^{0}_x=\emptyset \ \right].
\end{multline}
Using Lemma \ref{restrict}, because there is no compatibility constraint 
present after the conditioning, $\gamma_x$ is present with probability 
$e^{-4\beta}$ and $k(\gamma_x)$ is a geometric random variable, hence 
\begin{equation}
 \bP_{\bL,\gL,\beta}\left[ \gamma_x\in \Upsilon(\phi)\ ;  \ k(\gamma_x)=n \   \ | \   \Upsilon(\phi) \cap \bL^{0}_x=\emptyset \ \right]= (1-e^{-4\beta}) e^{-4\beta n}.
\end{equation}
Now $\Upsilon(\phi) \cap \bL^{0}_x=\emptyset$ is a decreasing event. Hence using Lemma \ref{restrict} (the stochastic domination by a product measure) we have
\begin{equation}
 \bP_{\bL,\gL,\beta} \left[\Upsilon(\phi) \cap \bL^{0}_x=\emptyset \ \right]\ge \prod_{\gamma \in \bL^{0}_x}(1-e^{-\beta |\tilde \gamma|}).
\end{equation}
The r.h.s.\ is bounded from below uniformly in $\gL$ and $\bL\in \gL$ provided that 
$ \sum_{\{\gamma\in \cC : {\bf 0} \in \bar\gamma \}}e^{-\beta |\tilde \gamma|}<\infty$ which is valid for large enough $\beta$.
\qed

\subsection{Proof of \eqref{zup2s}}

If $\phi(x),\phi(y)\ge n$, then there must exists a set of positive contours enclosing $x$ whose intensities sum up to something larger than $n$. The same must be satisfied for $y$. 
We let $p$ be the sum of intensities of contours which enclose both $x$ and $y$. Letting $\Upsilon_+(\phi)$ denote the set of positive contours in $\phi$ we introduce the events
\begin{equation}
 \begin{split}
 A_p&:= \left\{   \exists m, \exists (\gamma_i)_{i=1}^m \in  (\Upsilon_+(\phi))^m, \{x,y\} \subset \bar \gamma_1 \subset \dots \subset \bar \gamma_m,  \text{ and }    \sum_{i=1}^m k(\gamma_i)= p \right\}\\
B^{(x)}_{q}&:=  \left\{  \exists m, \exists (\gamma_i)_{i=1}^m \in  (\Upsilon_+(\phi))^m, \{x\} \subset \bar \gamma_1 \subset \dots \subset \bar \gamma_m, \ y\notin  \bar \gamma_m  , \ \sum_{i=1}^m k(\gamma_i)\ge q \right\},\\
B^{(y)}_{q}&:=  \left\{ \exists m, \exists (\gamma_i)_{i=1}^m \in  (\Upsilon_+(\phi))^m, \{y\} \subset \bar \gamma_1 \subset \dots \subset \bar \gamma_m, \ x\notin  \bar \gamma_m  ,  \  \sum_{i=1}^m k(\gamma_i)\ge q \right\},
 \end{split}
\end{equation}
with the convention that $A_p$, $B^{(x)}_{q}$ and $B^{(y)}_{q}$ are equal to $\gO_{\gL}$ for $p,q\le 0$. We also assume that their is not repetition in our sequences of contour, that is,  the inclusion $\bar \gamma_i \subset \bar \gamma_{i+1}$ is strict.
We have by a union bound

\begin{equation}
\bP_{\bL,\gL,\beta}(\phi(x),\phi(y)\ge n)\le \sum_{p=0}^{\infty}
\bP_{\bL,\gL,\beta}\left[A_p\cap B^{(x)}_{n-p} \cap B^{(y)}_{n-p}\right]
\end{equation}
Now as $A_p\cap B^{(x)}_{n-p} \cap B^{(y)}_{n-p}$ is an increasing event, we can use Lemma \ref{restrict} and independence of the events under $Q_{\gL,\beta}$  to obtain that the probability we wish to bound is smaller than   
\begin{equation}
 \sum_{p=0}^{\infty}
\bQ_{\gL,\beta}(A_p) \bQ_{\gL,\beta}(B^{(x)}_{n-p})\bQ_{\gL,\beta}(B^{(y)}_{n-p})
\end{equation}
Hence we can conclude if we can show that for all $p$ and $q$ we have 
\begin{equation}\label{laffair}
 \bQ_{\gL,\beta}(A_p)\le C e^{-6\beta p} \quad  \text{ and } \bQ_{\gL,\beta}(B^{(x)}_{q}), \bQ_{\gL,\beta}(B^{(x)}_{q})\le C e^{-4\beta q}.
\end{equation}
Indeed this would  directly imply
\begin{equation}
 \bP_{\bL,\gL,\beta}(\phi(x),\phi(y)\ge n) \le \sum_{p=0}^n e^{ 8\beta n - 2 \beta p}+ \sum_{p>n} e^{-6\beta n}.
\end{equation}
We focus on the first probability in \eqref{laffair} which is more delicate to control (the other one in controlled in a completely analogous fashion). Using union bound and ignoring the constraint that the contours are in $\cC_{\gL}$ as well as a factor $(1-e^{-\beta |\tilde \gamma_i|}) $ for each contour, we have 
\begin{equation}
  \bQ_{\gL,\beta}(A_p) \le \sum_{1\le m\le p} \  \ \sum_{\{x,y\}\subset \bar \gamma_1\subset \dots \subset \bar \gamma_m } \sum_{\{ (k_i) \ : \ \sum_{i=1}^m k_i=p \}} e^{- \beta\sum_{i=1}^m k_i |\tilde \gamma_i|}
\end{equation}
Note that we necessarily have $|\tilde \gamma_1|\ge 6$ and $|\tilde \gamma_2|\ge 8$
\begin{equation}
 \sum_{i=1}^m k_i |\tilde \gamma_i| \ge 6(k_1-1) + 
8 \sum_{i=2}^m (k_i-1) + \sum_{i=1}^m |\tilde \gamma_i|=
8(p-m)-2(k_1-1)+  \sum_{i=1}^m |\tilde \gamma_i|.
\end{equation}
With this observation we can bound the sum over $(k_i)$ as we have
(the first inequality results from the fact that  $2^{p-k_1-1}$ is a bound for the number of partitions of an inteval of length $p-k_1$)
\begin{equation}
 \sum_{\{ (k_i) \ : \ \sum_{i=1}^m k_i=p \}} e^{2\beta (k_1-1)}
 \le \sum_{k_1=1}^{p-m+1} e^{2\beta k_1} 2^{p-k_1-1}\le e^{2\beta (p-m)}(1+ 2 e^{-2\beta}).
\end{equation}
We obtain (with a constant allowed to depend on $\beta$)
\begin{equation}
   \bQ_{\gL,\beta}(A_p) \le   C  e^{-6\beta p} \sum_{1\le m\le p}   e^{6\beta m}  \ \sum_{\{x,y\}\subset \bar \gamma_1\subset \dots \subset \bar \gamma_m } e^{- \beta\sum_{i=1}^m  |\tilde \gamma_i|}
\end{equation}
Now to estimate the sum over contours, note that we have 
$|\bar  \gamma_i|\ge i+1$. Let us fix $K$ (depending only on $\beta$) which is such that 
\begin{equation}
 \sum_{\{ \gamma \ : \ x\in \bar\gamma,  \ |\tilde \gamma|\ge K+1 \}} e^{-\beta  |\tilde \gamma|}\le e^{-8\beta}.
\end{equation}
Then relaxing the restriction $\{x,y\}\subset \bar \gamma_1\subset \dots\subset \bar \gamma_m$ and replacing it by 
$x\in \bar\gamma,  \ |\tilde \gamma|\ge K$ for $i\ge K+1$ and simply 
$x\in \bar\gamma$ when $i\le K$ we obtain that 
\begin{equation}
 \sum_{\{x,y\}\subset \bar \gamma_1\subset \dots \subset \bar \gamma_m } e^{- \beta\sum_{i=1}^m  |\tilde \gamma_i|}
 \le \left(  \sum_{\bar \gamma \ni x} e^{-\beta |\tilde \gamma|}\right)^{m\wedge K}
  e^{8\beta (m-K)_+}\le C e^{-8\beta m}.
\end{equation}
This is sufficient to conclude that    $\bQ_{\gL,\beta}(A_p)\le C e^{-6\beta p}$.
\qed

\subsection{Proof of \eqref{zup3s}}

Using the triangle inequality we have 
\begin{multline}
 \left|\bP_{\bL,\gL,\beta}[ \phi(x)= n] - \theta_1 e^{-4\beta n}\right|
 \\ \le  \left|\bP_{\bL,\gL,\beta}[ \phi(x)= n] - \bP_{\beta}[\phi(x)= n]\right|+\left|\bP_{\gL,\beta}[ \phi(x)= n] - \theta_1 e^{-4 \beta n}\right|
\end{multline}
Now using the fact that $\{\phi(x)=n\}$ is a \text{local event} in the acception given in  \cite[Proposition 3.5]{cf:HL} we have using this proposition 
\begin{equation}
 \left|\bP_{\bL,\gL,\beta}[ \phi(x)= n] - \bP_{\beta}[\phi(x)= n]\right|
 \le e^{-(\beta/100)d(x,\bL^{\cc})}.
\end{equation}
Now concerning the second term, it is controlled by \cite[Proposition 4.6]{cf:part1}, we have 
\begin{equation}
 \left|\bP_{\gL,\beta}[ \phi(x)= n] - \theta_1 e^{-4 \beta n}\right|\le C e^{-6\beta n}.
\end{equation}
Note that the result in \cite{cf:part1} concerns $\bP_{\gL,\beta}[ \phi(x)\ge  n]$ but it ends up being equivalent to the above at the cost of a change of the constant $C$.
\qed
\subsection{Proof of \eqref{lotzof}}

 For the event to be realized, we need to have a family of positive cylinders 
 $(\gamma_j,k_j)_{j=1}^m$ in $\hat \Upsilon(\gamma)$  such that 
  
 \begin{equation}\label{inc}
 \begin{split}
\forall j \in \lint 1, m \rint,\quad  &\exists i \in \lint 1,\kappa\rint, \quad   x_i \in \bar \gamma_j,\\
 \forall i\in \lint 1,\kappa\rint, \quad  &\sum_{j=1}^{m} k_j \ind_{\{x_i\in \bar \gamma_j\}} \ge n.
 \end{split}
 \end{equation}
 Using Lemma \ref{restrict}
 we recall that 
 $$\bP_{\bL,\gL,\beta}\left[ \{(\gamma_j,k_j)\}_{j=1}^m \subset \hat \Upsilon(\phi)\right]\le \prod_{j=1}^m (1-e^{-\beta |\tilde \gamma_i|})e^{-k_i \beta|\tilde \gamma_i|} $$ 
 Thus, using a union bound, if we  let $\hat \cC^+_{{\bf x},n}$ be the set of  compatible families of cylinders which satisfies \eqref{inc}, it is sufficient to prove that 
\begin{equation}\label{letruc}
 \sum_{ \hat \gG \in \hat \cC^+_{{\bf x},n}} \prod_{\hat \gamma \in  \hat \gG }e^{-\gb k(\hat \gamma) |\tilde \gamma|}\le  C_{\kappa,\gep} e^{-3\gb \sqrt{\kappa} n}.
\end{equation}
Let us prove \eqref{letruc} by induction on $\kappa$.
Our statement includes the case $n =0$ (for which in particular one is allowed to include the empty cylinder family).
Note that the case $\kappa=1$ is a consequence of the proof of 
\eqref{zups}. To perform the induction step we split our collection of  $\hat \gG$ between cylinders which enclose all the $x_i$s and cylinders  {\blue which enclose a strict (non-empty) subset of $\{x_i\}_{i\in \lint 1,\kappa\rint}$}. We let $\hat \cC^{0}_{{\bf x},m}$  denote  families of compatible cylinders whose contours enclose all the $(x_i)_{i=1}^{\kappa}$ and whose sum of intensities is equal to $m$. Using the notation ${\bf y} \subset {\bf x}$ to say that ${\bf y}$ is a sequence obtained by deleting some of the $x_i$s we obtain
\begin{multline}
  \sum_{ \hat \gG \in \hat \cC^+_{{\bf x},n}} \prod_{\hat \gamma \in  \hat \gG }e^{-\gb k(\hat \gamma) |\tilde \gamma|} 
   \le  \sum_{m=0}^\infty
  \sum_{ \hat \gG_1 \in \hat \cC^0_{{\bf x},m}} 
  \prod_{\hat \gamma \in  \hat \gG_1 } e^{-\gb k(\hat \gamma_1) |\tilde \gamma_1|} \\
  \times \sumtwo{{\bf  y} \subset {\bf x}}{ {\bf y} \notin \{ \bf x, \emptyset\}}\left(\sum_{ \hat \gG_2 \in \hat \cC^+_{{\bf y},n-m}} \prod_{\hat \gamma_2 \in  \hat \gG _2} e^{-\gb k(\hat \gamma_2) |\tilde \gamma_2|}\right) \left(\sum_{ \hat \gG_3 \in \hat \cC^+_{{\bf x \setminus y},n-m}} \prod_{\hat \gamma_3 \in  \hat \gG_3 } e^{-\gb k(\hat \gamma_3) |\tilde \gamma_3|}\right)
  \end{multline}
If we let $|{\bf y}|$ denote the number of terms in the sequence ${\bf y}$. The induction hypothesis implies that 
\begin{multline}
 \left(\sum_{ \hat \gG_2 \in \hat \cC^+_{{\bf y},n-m}} \prod_{\hat \gamma_2 \in  \hat \gG _2} e^{-\gb k(\hat \gamma_2) |\tilde \gamma_2|}\right) \left(\sum_{ \hat \gG_3 \in \hat \cC^+_{{\bf x \setminus y},n-m}} \prod_{\hat \gamma_3 \in  \hat \gG_3 } e^{-\gb k(\hat \gamma_3) |\tilde \gamma_3|}\right)\\
 \le C_{|{\bf y}|}C_{\kappa-|{\bf y}|} e^{-3\beta (\sqrt{\kappa-|y|}+ \sqrt{|y|})(n-m)_+}
 \le C_{|y|}C_{\kappa-|{\bf y}|}e^{-3\beta (\sqrt{\kappa-1}+1)(n-m)_+}
\end{multline}
Summing over all ${\bf y}$ only factors a constant that depends only on $\kappa$, hence to conclude 
 we only need to check that for some constant depending on $\kappa$ we have for every $m\ge 0$
\begin{equation}
 \sum_{ \hat \gG \in \hat \cC^0_{{\bf x},m}} e^{-\gb  \sum_{\gamma \in \hat \gG}k(\hat \gamma) |\tilde \gamma|}\le  C(\kappa)e^{-3\gb  \sqrt{\kappa} m}. 
\end{equation}
From now on, we present an element $\hat \gG\in  \hat \cC^0_{{\bf x},m}$ as a sequence $((\gamma_1,k_1), \dots, (\gamma_l,k_l))$ where (identifying ${\bf x}$ with a set of points) ${\bf x} \subset \bar \gamma_1\subset \dots \subset \bar \gamma_l$ (and $\sum k_i=m$). 
Note that by isoperimetric inequality, all contours have length at least 
$4\sqrt{\kappa}$.
Hence  we have 
\begin{equation}
 \sum_{\hat \gamma \in \hat \gG}k(\hat \gamma) |\tilde \gamma|
 \ge  \sum_{\gamma \in \hat \gG}|\tilde \gamma|
 +(m-l)4\sqrt{\kappa}.
\end{equation}
Now let us choose $K$ (depending only on $\kappa$ and $\gb$) such that 
$$\sum_{\{ \gamma \  : \  {\bf 0}\in \bar\gamma \text{ and } |\bar\gamma|\ge K+1\}} e^{-\gb |\tilde \gamma|}\le e^{-4\gb(\sqrt{\kappa}+1)}.$$
Proceeding as in the proof of \eqref{zup2s} we observe that as our sequence of contours is nested we have  $|\bar \gamma_i|\ge i$. This entails that
\begin{multline}
\sum_{{\bf x} \subset \gamma_1 \subset \dots \subset \gamma_l } \prod_{i=1}^l e^{-\gb k_i|\tilde \gamma_i|}\\
\le  
 C(\kappa,\gb) e^{-\gb(m-l)4\sqrt{\kappa}}
  \left(\sum_{\gamma\supset {\bf x}}e^{-\gb |\tilde \gamma|} \right)^{K\wedge l} \left( \sum_{\{ \gamma \ : \bar \gamma\supset {\bf x} , |\tilde \gamma|\ge K+1  \}}e^{-\gb |\tilde \gamma|}\right)^{(l-K)_+}\\
  \le C'(\kappa, \gb) 
  e^{-\gb[ 4m \sqrt{\kappa}+l]}
\end{multline}
Hence summing over $l$ and ${\bf k}$, using the fact that 
$$   \#\{{\bf k} \ : \ \sum_{i=1}^l k_i =m  \}=  \binom{m-1}{l-1}.$$
we obtain
\begin{multline}
\sum_{l=1}^{m} \sum_{ \{{\bf k} \ : \ \sum_{i=1}^l k_i =m  \}}\sum_{{\bf x} \subset \gamma_1 \subset \dots \subset \gamma_l } e^{-\gb k_i|\tilde \gamma_i|}\\
 \le  C'(\kappa, \gb)   e^{-4\gb m \sqrt{\kappa}}
 \sum_{l=1}^{m} e^{-4\gb l} \binom{m-1}{l-1}\le C''(\kappa, \gb)  e^{-3\gb m \sqrt{\kappa}}
\end{multline}
and that concludes the proof.

\section{Proof of Lemma \ref{charpesti}}\label{appcharpesti}

Let us start with $\varrho_2$.
Note that when $x\ge -1/2$ we have by Taylor's formula  
$$\frac{1}{1+x}\le 1-x+x^2+ 8|x|^3.$$
Now as $p\ge 1/2$ by assumption we obtain 
\begin{multline}
 \log \bbE[ (1+p(\xi-1))^{-1}] = \bbE[ (1+p(\xi-1))^{-1}] -1 \\
 \le - p\bbE[(\xi-1)]+ p^2\bbE[(\xi-1)^2] + 8 p^3
 \bbE[|\xi-1|^3]= p^2 \Var(\xi)+  8 p^3\bbE[|\xi-1|^3].
\end{multline}
Concerning $\varrho_3$, using the expansion with one fewer term $\frac{1}{1+x}\le 1-x+4x^2$, valid for the same values of $x$ we have
\begin{multline}
 \bbE[ (1+p(\xi-1))^{-1} \xi]
 \le \bbE[\xi]- p\bbE[(\xi-1)\xi]+ 4p^2\bbE[(\xi-1)^2\xi]\\= 1+p \Var (\xi)+4 p^2 \bbE[(\xi-1)^2\xi].
\end{multline}
Using again that $\log \le u-1,$ and the result concerning $\varrho_2$, it is sufficient to conclude.
Now concerning $\varrho_1$ we use the inequality (obtained from Taylor expansion, valid for any $x\ge -1/2$ and $\gamma\in (0,1)$. 

$$ (1+x)^{\gamma} \le 1+\gamma x -\frac{\gamma(1-\gamma)}{2} x^2 +  4\gamma   |x|^3$$
We obtain (assuming $\theta\le 1/2$)

\begin{equation}
  \bbE[ (1+p(\xi-1))^{\frac{\theta}{1-\theta}}]-1\le  1- p^2\frac{\theta(1-2\theta)}{(1-\theta)^2} \Var(\xi) +  \frac{4\theta}{1-\theta} p^3 \bbE[|\xi-1|^3]. 
\end{equation}
This yields in turn 
\begin{equation}
   \frac{1-\theta }{\theta}\log \bbE[ (1+p(\xi-1))^{\frac{\theta}{1-\theta}}]\le -\frac{p^2}{2} \left( 1- \frac{\theta}{1-\theta}\right)+ 4p^3 \bbE[|\xi-1|^3],  
\end{equation}
and concludes our argument.

\qed


\begin{thebibliography}{99}


\bibitem{cf:AW}  M.~Aizenman and J.~Wehr, \emph{Rounding effects of quenched randomness on first-order phase transitions}, Comm. Math. Phys. {\bf 130} (1990), 489-528.

\bibitem{cf:AHP} M.~Aizenman, M.~Harel and  R.~Peled \emph{Exponential decay of correlations in the 2D random field Ising model} 
J. Stat. Phys. {\bf 180} (2020) 304-331.

\bibitem{cf:Ken} K.~S.~Alexander, \emph{The effect of disorder on
    polymer depinning transitions}, Commun. Math. Phys. {\bf 279} (2008),
     117-146. 
     
\bibitem{cf:ADM}    K.S.\ Alexander, F.\ Dunlop and  S. Miracle-Sol\'e,\
{\sl Layering and Wetting Transitions for an SOS Interface} J. Stat. Phys.\ 
 {\bf 142} (2011) 524-576.
 
 
  \bibitem{cf:AZ}
   K.~S.~Alexander and N.~Zygouras, \emph{Quenched and annealed critical points in polymer pinning models}, Comm. Math. Phys. {\bf 291} (2009), 659-689.  
   
   
   
    \bibitem{cf:AY}  K Armitstead and J M Yeomans, \emph{
 A series approach to wetting and layering transitions. II. Solid-on-solid models} 
J.  Phys. A, Math. Gen. {\bf 21} (1988) 159-171.
   
   

 \bibitem{cf:A} K.\  Azuma, \emph{Weighted Sums of Certain Dependent Random Variables}, Tôhoku Math. Jour. {\bf 19} (1967) 357-367.
 
  \bibitem{cf:BL}
 Q.~Berger
 and H.~Lacoin, \emph{Pinning on a defect line: characterization of marginal disorder relevance and sharp asymptotics for the critical point shift},
 J. Inst. Math. Jussieu  
{\bf 17}
(2018)  305-346


 
 
 \bibitem{cf:BM}     R.\ Brandenberger and C.E.\ Wayne, 
\emph{
Decay of correlations in surface models}, J. Stat. Phys.  {\bf 27} (1982) 425-440.


\bibitem{cf:BMF} J. Bricmont, A. El Mellouki and J.\ Fr\"olich \emph{Random Surfaces in Statistical Mechanics: Roughening, Rounding, Wetting, ...}
 J. Stat Phys {\bf 42} (1986) 743-798.

 
\bibitem{cf:BCF}  W.K.\ Burton , N.\ Cabrera  and   F.C.\ Frank \emph{The growth of crystals and the equilibrium structure of their surfaces}
Phil. Trans. Royal Soc. A {\bf 243} (1951) 299-358.

 \bibitem{cf:PLMST} P. Caputo, E. Lubetzky, F. Martinelli, A. Sly and F. L. Toninelli, 
{\sl Scaling limit and cube-root fluctuations in SOS surfaces above a wall}, J. Eur. Math. Soc. {\bf 18} (2016) 931-995.

  \bibitem{cf:CM}  F.\ Cesi and F.\ Martinelli,
\emph{On the layering transition of an SOS surface interacting with a wall. I. Equilibrium results} J. Stat. Phys {\bf 82}  (1996) 823-916.


  \bibitem{cf:CSY} F.\ Comets, T.\ Shiga, and N.\ Yoshida \emph{Directed polymers in a random environment: path localization and strong disorder},     Bernoulli
    {\bf 9} (2003), 705-723.
    
%
\bibitem{cf:CM1} L. Coquille and
    P. Mi\l o\'s,  \emph{A note on the discrete Gaussian free field with disordered pinning on $\bbZ^d$, $d\ge 2$},
Stoch.  Proc. and  Appl. {\bf 123} (2013) 3542-3559.
%

 \bibitem{cf:chal}
J. T. Chalker. {\sl The pinning of an interface by a planar defect},
J. Phys. A {\bf 15} (1982) 481–485.

 \bibitem{cf:DM}   E.I.\ Dinaburg and  A.E. Mazel
\emph{Layering transition in SOS model with external magnetic field} J. Stat. Phys {\bf 74} (1994) 533-563.
%

 \bibitem{cf:DGLT}
B. Derrida,
G. Giacomin, H. Lacoin and F. L. Toninelli, \emph{
Fractional moment bounds and disorder relevance for pinning models},
Commun. Math. Phys. {\bf 287} (2009) 867--887.

\bibitem{cf:Dob} R. L. Dobrushin, \emph{Gibbs states describing a coexistence of phases for the three-dimensional Ising Model},
{\sl Theory Probab. Appl.} {\bf 17} (1972) 582-600.


 
 \bibitem{cf:Vel} Sacha Friedli and Yvan Velenik  \emph{Statistical Mechanics of Lattice Systems}, Cambridge
University Press,  2017. 

\bibitem{cf:FS} J. Fr\"ohlich and T. Spencer. \emph{The Kosterlitz-Thouless transition in two-dimensional abelian
spin systems and the Coulomb gas}
Comm. Math. Phys. {\bf 81} (1981) 527-602.
%

 \bibitem{cf:GB} G. Giacomin, {\sl Random polymer models}, 
Imperial College Press, World Scientific (2007). 


\bibitem{cf:G} G. Giacomin, \emph{Disorder and critical phenomena through basic probability models}, \'Ecole d'\'et\'e de probablit\'es de Saint-Flour XL-2010, Lecture Notes in Mathematics {\bf 2025}, Springer, 2011.

\bibitem{cf:GL} G.~Giacomin and H.~Lacoin,
\emph{Pinning and disorder relevance for the lattice Gaussian free field}, J. Eur. Math. Soc. (JEMS) {\bf 20} (2018),  199-257.


\bibitem{cf:GL2} G. Giacomin. H. Lacoin, \emph{Disorder and wetting transition: the pinned harmonic crystal in dimension three or larger}, Ann. Appl. Probab. {\bf 28} (2018) 577-606

 \bibitem{cf:GL3} G.~Giacomin and H.~Lacoin,
\emph{The disordered lattice free field pinning model approaching criticality}, (preprint)  	arXiv:1912.10538.
 
 \bibitem{cf:GLT} G. Giacomin, H. Lacoin and F. L. Toninelli, \emph{Marginal relevance of disorder
for pinning models}, Commun. Pure Appl. Math. {\bf 63} (2010) 233-265.

  
 \bibitem{cf:GT} G.~Giacomin and  F.~L.~Toninelli, \emph{Smoothing effect of quenched disorder on polymer depinning transitions},
Commun. Math. Phys. {\bf 266} (2006), 1-16.
 
 
 \bibitem{cf:Hcrit} A.~B.~Harris, {\it Effect of random defects on the critical behaviour of Ising models} J. Phys. C {\bf 7} (1974),
1671-1692.

  \bibitem{cf:Harr}
  T.E.\ Harris {\sl A lower bound for the critical probability in a certain percolation}, Proc.  Camb. Phil. Soc. {\bf 56} (1960) 13-20.
  
  \bibitem{cf:IV} D. Ioffe and Y. Velenik, 
{\sl Low-temperature interfaces: Prewetting, layering, faceting and Ferrari-Spohn diffusions}, Markov Proc. Relat. Fields.\ {\bf 24} (2018) 487-537. 
  

\bibitem{cf:KP} R. Koteck\'y and   D. Preiss, \emph{Cluster expansion for abstract polymer models}
Comm. Math. Phys. {\bf 103} 491-498.
  
   \bibitem{cf:FF2}
 H.~Lacoin, \emph{Pinning and disorder for the Gaussian free field II: the two dimensional case},  Ann. Sci. ENS. {\bf 52} (2019)
 1331-1401

 \bibitem{cf:part1} H.Lacoin, \emph{Wetting and layering for Solid-on-Solid I: Identification of the wetting point and critical behavior} Commun. Math. Phy.\
{\bf 362} (2018) 1007-1048. 

 
 \bibitem{cf:HL} H.\ Lacoin, \emph{Wetting and layering for Solid-on-Solid II: 
Layering transitions, Gibbs states, and regularity of the free energy} Journal de l’École polytechnique  {\bf 7} (2020)  1-62. 



\bibitem{cf:Pei} R. Peierls, \emph{On Ising’s model of ferromagnetism} Proc. Cambridge Phil. Soc.
{\bf 32}
 (1936) 477-181.
 
  
\bibitem{cf:Swend} R.\ H.\ Swendsen, \emph{Roughening transition in the solid-on-solid model}
Phys. Rev. B {\bf 15} (1977) 689-692.



\bibitem{cf:T}  H.\ N.\ V.\ Temperley \emph{Statistical mechanics and the partition of numbers II. The form of crystal surfaces}
Proc. Camb. phil. Soc. {\bf 48} (1952) 683-697.

\bibitem{cf:Trep}
F.~L.~Toninelli, \emph{A replica-coupling approach to disordered pinning models},
Commun. Math. Phys. {\bf 280} (2008), 389-401.
 

\bibitem{cf:ofer}
O.~Zeitouni, \emph{Branching random walks and Gaussian fields}, lecture notes,  available on the webpage of the author
   
 
\end{thebibliography}
\end{document}